\documentclass[12pt,reqno]{amsart}
\usepackage{amsfonts,amsthm,amsmath,amssymb,amscd}
\usepackage{mathabx} 
\usepackage{esint}
\usepackage{graphics}
\usepackage{indentfirst}
\usepackage{latexsym}
\usepackage[dvips]{epsfig}
\usepackage{mathrsfs}
\usepackage{color}
\usepackage{enumitem}
\usepackage{hyperref}
\usepackage{color, soul}
\date{}
%%%%%%%%%%%%%%%%%%%%%%%%%%%%%%%%%%%%%%%%%%%%%%%%%%%%%%%%%%%%%%%%%%%%
\allowdisplaybreaks \allowdisplaybreaks[2]

\oddsidemargin=0truemm \evensidemargin=0truemm \textheight=200truemm
\textwidth=160truemm \baselineskip=16pt
\numberwithin{equation}{section}

\usepackage{graphics}
\usepackage{epsfig}
\textwidth =16cm \textheight=24cm \hoffset 0cm \voffset 0cm
\topmargin -1cm \arraycolsep 2pt

%\hoffset -1.5cm\voffset -0.5cm \setlength{\parindent}{6mm}
%\setlength{\parskip}{3pt plus1pt minus2pt}
%\setlength{\baselineskip}{2pt plus10pt minus10pt}
%\setlength{\textheight}{22true cm} \setlength{\textwidth}{15.8true
%cm}
%\renewcommand{\baselinestretch}{1.15}
%%\renewcommand{\baselinestretch}{1.2} %\hangjianju

%%%%%%%%%%%%%%%%%%%%%%%%%%%%%%%%%%%%%%%%%%%%%%%%%%%%%%%%
% new commands or definitions
%%%%%%%%%%%%%%%%%%%%%%%%%%%%%%%%%%%%%%%%%%%%%%%%%%%%%%%%

\newcommand{\norm}[1]{\left\lVert #1 \right\rVert}
\newcommand{\abs}[1]{\left\lvert #1 \right\rvert}
\newcommand{\subeqref}[2]{$ \eqref{#1}_{#2} $}
\newcommand{\jump}[1]{\left\ldbrack#1\right\rdbrack} 
\def\p{\partial}
\def\R{\mathbb{R}}
\def\nb{\bar{n}}
\def\mb{\bar{m}} 
\def\ub{\bar{u}}
\def\wb{\bar{w}}
\def\phib{\bar{\phi}}
\def\nt{\tilde{n}}
\def\ut{\tilde{u}}
\def\mt{\tilde{m}}
\def\phit{\tilde{\phi}}
\def\nsinf{n^s_{\infty}}
\def\usinf{u^s_{\infty}}
\def\msinf{m^s_{\infty}}
\def\phisinf{\phi^s_{\infty}}
\def\X{\mathcal{X}}
\def\Y{\mathcal{Y}}

\def\O{\mathcal{O}}
\def\sR{\mathscr{R}}
\def\sG{\mathscr{G}}

\def\sha{\sharp}
%\def\blue{\textcolor{blue}}
%\def\red{\textcolor{red}}

%opening

\newtheorem{theorem}{Theorem}[section]
\newtheorem{corollary}[theorem]{Corollary}
\newtheorem{lemma}[theorem]{Lemma}
\newtheorem{proposition}[theorem]{Proposition}
\newtheorem{remark}[theorem]{Remark}

\pagestyle{myheadings}
\begin{document}

\title[NSP under periodic perturbations]{Asymptotic stability of shock profiles and rarefaction waves to the Navier--Stokes--Poisson system under space-periodic perturbations}
	
	\author{Yeping~Li}
	\address[Yeping Li]{School of Sciences, Nantong University, Nantong 226019, China}
	\email{\href{mailto:ypleemei@aliyun.com}{ypleemei@aliyun.com}}
	
	\author{Yu~Mei}
	\address[Yu Mei]{School of Mathematics and Statistics, Northwestern Polytechnical University, Xi'an 710129, China}
	\email{\href{mailto:yu.mei@nwpu.edu.cn}{yu.mei@nwpu.edu.cn}}
	
	\author{Yuan~Yuan}
	\address[Yuan Yuan]{School of Mathematical Sciences, South China Normal University, Guangzhou 510631, China. }
	\email{\href{mailto:yyuan2102@m.scnu.edu.cn}{yyuan2102@m.scnu.edu.cn}}

% REQUIRED
\keywords{Compressible Navier-Stokes-Poisson equations; Viscous shock waves; Rarefaction waves; Periodic perturbations.}

% REQUIRED
\subjclass[2010]{35Q30, 35L65, 35L67}

\maketitle
\begin{abstract}
This paper concerns with the large-time behaviors of the viscous shock profile and rarefaction wave under initial perturbations which tend to space-periodic functions at infinities for the one-dimensional compressible Navier--Stokes--Poisson equations. It is proved that: (1) for the viscous shock with small strength, if the initial perturbation is suitably small and satisfies a zero-mass type condition, then the solution tends to background viscous shock with a constant shift as time tends to the infinity, and the shift depends on both the mass of the localized perturbation, and the space-periodic perturbation; (2) for the rarefaction wave, if the initial perturbation is suitably small, then the solution tends to background rarefaction wave as time tends to infinity. 

The proof is based on the delicate constructions of the quadratic ansatzes, which capture the infinitely many interactions between the background waves and the periodic perturbations, and the energy method in Eulerian coordinates involving the effect of self-consistent electric field.	Moreover, an abstract lemma is established to distinguish the non-decaying terms and good-decaying terms from the error terms of the equations of the quadratic ansatzes, which will be benefit to constructing the ansatzes and simplifying calculations for other non-localized perturbation problems, especially those with complicatedly coupling physical effects.

\end{abstract}

%\tableofcontents

%%%%%%%%%%%%%%%%%%%%%%%%%%%%%%%%%%%%%%%%%%%%%%%%%%%%%
%
%%%%%%%%%%%%%%%%%%%%%%%%%%%%%%%%%%%%%%%%%%%%%%%%%%%%%
\section{Introduction}
The one-dimensional compressible Naiver--Stokes--Poisson (denoted as NSP in the sequel) equations read
\begin{equation}\label{NSP-Ion}
	\begin{cases}
		n_t+m_x=0,&\quad\\
		m_t+\left(\dfrac{m^2}{n}\right)_x+An_x-n\phi_x=\p_x^2u,&\quad\\
		\p_x^2\phi=n-e^{-\phi},&\quad
	\end{cases}x\in \R, t>0,
\end{equation}
where $n(x,t)>0 $, $ m(x,t)\in\R $ and $u:=m/n$ are the density, the momentum and the velocity of ions, respectively, $ \phi(x,t)$ is the potential of the self-consistent electric field, and the positive constant $A$ is the absolute temperature of ions.  The system \eqref{NSP-Ion} usually models the motion of the viscous ions, in which the electron density $n_e$ is determined by the electrostatic potential $\phi$ in terms of the Boltzmann-Maxwell relation $n_e = e^{ -\phi}$. It could be also extended to the full (non-isentropic) compressible NSP system by taking into account the effect of the temperature, or two-fluid compressible NSP system to describe the motion of the ions and the electrons. It can also governs the motion of viscous ions or charged particles in semiconductor devices, and the motion of self-gravitational viscous gaseous stars in plasmas physics. We refer to \cite{chandrasekhar1957,degond2000,jungel2001,sitenko1995} for more physical background of compressible NSP system.

In this paper, we consider the long time behaviors of the solutions to the Cauchy problem of NSP \eqref{NSP-Ion} under space-periodic perturbation at far fields, which is supplemented with the following initial data 
\begin{equation}\label{initial}
	[n,m](x,0)=[n_0(x),m_0(x)],~~x\in\R,
\end{equation} 
with far fields 
\begin{equation}
	\label{cond-farfield}
	\lim\limits_{x\rightarrow\pm\infty}[n_0(x),m_0(x)]=[\nb^\pm,\mb^\pm]+[\rho_0^\pm(x),w_0^\pm(x)].
\end{equation}
Here $\nb^\pm>0, \mb^\pm, \ub^\pm=\mb^\pm/\nb^\pm$ are different constants, $[\rho_0^\pm,w_0^\pm]$ are periodic functions with periods $\pi^\pm$ and zero average. The localized perturbations are also allowed in the condition \eqref{cond-farfield}.
The far field of potential $\phib^\pm$  is assumed to satisfy the following quasineutral condition
\begin{equation}
	\label{cond-farfield-phi}
%	\lim\limits_{x\rightarrow\pm\infty}\phi_0(x)=\phib^\pm+\varphi_0^\pm(x)=-\ln (\nb^\pm+\rho_0^\pm(x)).
\phib^\pm=-\ln \nb^\pm.
\end{equation} 

The constant states $(\nb^\pm,\ub^\pm)$ determine the background waves, and the large time behaviors of the solutions to the Cauchy problem of NSP \eqref{NSP-Ion}-\eqref{cond-farfield-phi}.
When $(\nb^\pm,\ub^\pm)$ are different, there could exist a shock or rarefaction wave to the Riemann problem of the vanishing viscosity and quasi-neutral limit of the above NSP system, i.e., the corresponding Euler equation:
\begin{equation}\label{EP-Ion}
	\begin{cases}
		\p_t n+\p_x(nu)=0,&\\
		n(\p_tu+u\p_xu)+A\p_xn-n\p_x\phi=0,&\\
		\phi=-\ln n,&
	\end{cases}
\end{equation}
with initial data
\begin{equation}\label{Quasi-Ini}
	[n,u](x,0)=[n_0,u_0](x)=\begin{cases}
		[\nb^-,\ub^-],&x>0,\\
		[\nb^+,\ub^+],&x<0.
	\end{cases}
\end{equation}
%\begin{equation}\label{Quasi-Ini-phi}
%	\phib(x,0)=\phib_0(x)=-\ln \nb_0.
%\end{equation}
The quasineutral Euler system \eqref{EP-Ion}-\eqref{Quasi-Ini} has two genuinely nonlinear characteristics 
\begin{equation}
	\lambda_1=u-\sqrt{A+1},\quad\lambda_2=u+\sqrt{A+1}.
\end{equation}
If $(\nb^\pm,\ub^\pm)$ satisfy the Lax shock condition 
\begin{equation}
	\label{entropy}
	\sqrt{(A+1)\nb^+}<s<\sqrt{(A+1)\nb^-},
\end{equation}
and the following Rankine-Hugoniot condition 
\begin{equation}\label{RH}
	\begin{cases}
		&-s\jump{\nb}+ \jump{\nb \ub }=0,\\
		&-s\jump{\nb\ub} + \jump{\nb \ub^2+ (A+1) \nb } =0,
	\end{cases}
\end{equation}
where $\jump{f}:=f^+ - f^-$ for any function $f$, and $s$ is the shock speed, then the Riemann problem \eqref{EP-Ion} and \eqref{Quasi-Ini} admits a 2-shock.
In addition, when the constant states are close enough (i.e. $\abs{\nb^+-\nb^-}$ is small), \eqref{NSP-Ion} has a unique (up to a shift) viscous shock profile $[n^s,m^s, \phi^s](\xi)=[n^s,m^s,\phi^s](x-st)$, which tends to $[\nb^\pm,\mb^\pm,\phib^\pm]$ as $\xi\rightarrow\pm \infty$ (\cite{duan2020a}); see Lemma \ref{lem-shock-prop} below. 
While if the states at far fields satisfy 
\begin{align*}
	&R_1(\nb^-,\ub^-)\equiv\{[n,u]\in\R_+\times\R:u+\sqrt{A+1}\ln n=\ub^-+\sqrt{A+1}\ln \nb^-,n<\nb^-,u>\ub^-
	\}
\end{align*}
or
\begin{align*}
R_2(\nb^-,\ub^-)\equiv\{[n,u]\in\R_+\times\R:u-\sqrt{A+1}\ln n=\ub^--\sqrt{A+1}\ln \nb^-,n>\nb^-,u<\nb^-
	\},
\end{align*}
then the Riemann problem \eqref{EP-Ion} and \eqref{Quasi-Ini} admits a rarefaction wave solution, without loss of generality, taking the second family for example, 
\begin{equation}\label{rare-sol}
	\begin{aligned}
		u^R(x/t)&=w^R(x/t)-\sqrt{A+1},\\
		n^R(x/t)&=\nb^- \exp\left(\dfrac{u^R(x/t)-\ub^-}{\sqrt{A+1}}\right),\\
		\phi^R(x/t)&=-\ln n^R(x/t),
	\end{aligned}
\end{equation}
where $w^R(x/t)$ is the global entropy weak solution to the Riemann problem of the Burgers equation
\begin{equation}
	\begin{cases}
		\p_tw+ww_x=0,\\
		w(x,0)=\begin{cases}
			\wb^-=\ub^-+\sqrt{A+1}\quad \text{for}~ x<0,\\
			\wb^+=\ub^++\sqrt{A+1}\quad \text{for}~x>0.
		\end{cases}
	\end{cases}
\end{equation}

There are substantial literatures on the stability and large-time behaviors around nonlinear wave patterns for gas dynamical equations and other related equations.
For the compressible Navier-Stokes system and the hyperbolic conservation laws, Matsumura and Nishihara \cite{Matsu.N1985}, Kawashima and Matsumura \cite{Kawas.M1985} and Goodman \cite{Goodm.1986} proved the stability of a single viscous shock by energy methods, provided that the shock-strength is small and the initial data carries no excessive mass. Szepessy and Xin \cite{Szepe.X1993} and Liu and Zeng \cite{Liu.Z2015} further removed the zero-mass condition by introducing diffusion waves propagating along other families of characteristics and establishing the point-wise estimates. 
If the shock-strength is arbitrarily large, Zumbrun, Howard, and Mascia \cite{Zumbr.H1998,Masci.Z2004} showed the nonlinear stability under a spectral stability for system of conservation laws and hyperbolic-parabolic systems, and then extended the theory for the Navier-Stokes equations in \cite{Humph.L.Z2017,Humph.L.Z2009,Barke.Z2016}. 
Recently, \cite{He.H2020} successfully used the elementary energy method to obtain the nonlinear stability of the large-amplitude viscous shock for the isentropic Navier-Stokes equations, with the aid of the effective velocity in \cite{Vasse.Y2016}.  
Kang and Vasseur recently developed the $L^2$-contraction properties for conservation laws \cite{Kang.V2017,Kang.V2020a,Kang.V2021}, and together with Wang \cite{Kang.V.W2023}, they proved the stability of the composite wave of a viscous shock and a rarefaction wave for Navier--Stokes equations, which is a long-existing open problem proposed by Matsumura and Nishihara in 1992 \cite{Matsu.N1992}.
We also refer to \cite{Matsu.N1986,Goodm.1989,Xin.1990,Liu.X1997,Nishi.N1999,Huang.X.Y2008,Huang.L.M2010,Li.W.W2018,Li.W.W2020,Wang.W2022,Yuan.2023} and the references therein for the stability results of other Riemann solutions such as rarefaction waves, contact discontinuities and some composite waves and planar waves.

For the compressible NSP system, Duan and Liu \cite{Duan.L2015a} proved the stability of rarefaction waves of the one-dimensional isentropic NSP system for one-fluid or two fulids under small perturbations. Together with Zhang, they \cite{duan2020a} further obtained the existence and stability of viscous shock waves for the same system for one-fluid describing the ions. The stability of rarefaction waves with large initial perturbations was established by Zhang, Zhao and Zhao \cite{zhang2022b} recently.
There are also asymptotic stability of the superposition of a stationary solution and a rarefaction wave for the outflow problem in one-dimensional case ( Li and Zhu \cite{li2017}), and the planar rarefaction wave in three-dimensional case (Li, Luo and Wu \cite{Li.L.W2022}), the contact discontinuity (Liu, Yin and Zhu \cite{liu2016b}) for the full compressible NSP system of the ions. We refer the interested readers to \cite{cui2016,duan2016,hong2018,li2010,wang2010a,wu2017} for more results in the NSP system. 

The study of the periodic perturbations to the hyperbolic conservation laws is also important and interesting. There could exist \emph{new phenomenons} with the non-localized perturbations, and during the study of infinitely many wave interactions, new techniques are developed and more observations of the structures of conservation laws are revealed. 
Xin, Yuan, and Yuan\cite{Xin.Y.Y2019,Yuan.Y2020,Xin.Y.Y2021} first established the stability of shocks and rarefaction waves with periodic perturbations at far fields for the 1-d scalar conservation laws in both inviscid and viscous cases. 
In the inviscid case \cite{Xin.Y.Y2019,Yuan.Y2020}, it is proved that the resultant solutions of the shock or rarefaction waves under periodic perturbations are \emph{exactly} two periodic parts jointed by the background waves. 
While in the viscous case \cite{Xin.Y.Y2021}, the authors proposed a new \emph{quadratic} ansatz, different from other linear ansatzes in localized perturbation problems, to capture the infinitely many interactions of the background wave with the periodic oscillations. 
Moreover, it is firstly found that the long-time limits of the inviscid shock and viscous shock both have shifts induced by the periodic perturbations, \emph{other than} the mass of the localized perturbations. It is a new phenomenon that even though the final shift would not be changed by a perturbation of zero mass in a single period, but it will be \emph{cumulatively} affected by the same perturbations of zero mass with infinitely many periods.
Later, Huang and Yuan \cite{Huang.Y2021} extended this theory to the nonlinear stability of a viscous shock for the 1-d isentropic compressible Navier--Stokes equations under a zero-mass type condition involving periodic perturbations. The nonlinear stability of planar viscous shocks or rarefaction waves for multi-dimensional Navier--Stokes equations, and some other results on composite waves can be found in \cite{Huang.X.Y2022, Yuan.2022a,Yuan.Y2022} and the references therein.

\vline

In this paper, we investigate the coupling effects of the self-consistent electric field and the infinitely many wave interactions induced by non-localized perturbations.
It is proved that: (1) for the viscous shock with small strength, if the initial perturbation is suitably small and satisfies a zero-mass type condition, then the solution to equations \eqref{NSP-Ion}-\eqref{cond-farfield-phi} tends to background viscous shock with a constant shift as time tends to the infinity, which the shift depends on both the mass of the localized perturbations, and the space-periodic perturbations; (2) for the rarefaction wave, if the initial perturbation is suitably small, then the solution to equations \eqref{NSP-Ion}-\eqref{cond-farfield-phi} tends to background rarefaction wave as time tends to infinity. The rigorous theorems are presented in Theorems \ref{thm-shock} and \ref{thm-rare} blew after reformations.

Now let us briefly illustrate the main difficulties on our problem and the key ideas to solve them. The first difficulty arose from the periodic perturbations: the solution to \eqref{NSP-Ion} and \eqref{cond-farfield} is expect to approach the oscillating periodic solutions at infinities (see \cite{Xin.Y.Y2021} for the scalar conservation laws), and thus the difference between the solution and the background shock profile or rarefaction wave is \emph{not integrable} on $\R$, so that the energy method in \cite{Duan.L2015a} and \cite{duan2020a} fails for these non-localized perturbation problems. To overcome this difficulty, the key point is to construct suitable ansatzes which carries the same oscillations as the periodic solutions at the far fields, at the same time such that the error terms of the equations of the ansatzes are well controlled. Motivated by \cite{Xin.Y.Y2021} and \cite{Huang.Y2021}, the ansatzes are constructed as the \emph{quadratic} combinations of the background Riemann solution and the periodic solutions, which capture the infinitely many interactions between the background waves and the periodic perturbations, and are different from the linear ansatzes in the problems of localized perturbations. 

Another difficulty arose from the self-consistent electric field: the construction of the ansatz of the potential is subtle. Although that the potential of the electric field is completely determined by the density with an elliptic equation,  it should not be set as the logarithm of the ansatz of the density as \cite{Duan.L2015a} for the rarefaction wave. 
It should not be a quadratic combination of the weight functions and the periodic solutions as \cite{Xin.Y.Y2021} and \cite{Huang.Y2021}, either. It should be the summation of the background potential and the quadratic parts, which consists of the weight functions induced by the background density and the periodic \emph{oscillations}. Indeed, for the scalar conservation laws \cite{Xin.Y.Y2021} or the isentropic Navier--Stokes \cite{Huang.Y2021}, these last two ways are equivalent. See Remark \eqref{rem-ansatz} below for more details. The setting of the ansatz can be interpreted by a simple fact that it is the periodic oscillations, not directly the periodic solutions, that interact with the background waves at far fields. 

Compared with the classical localized perturbation problems of the Navier--Stokes system, both the nonlinear ansatzes and the coupling effects of the potential arise more complicated error terms of the equations of the ansatzes.  
The study of the ansatz of the coupling potential and the complicated error terms motivates us to propose an abstract lemma which distinguishes the non-decaying terms and good-decaying terms from the error terms of the equations of the quadratic ansatzes. 
It is revealed by the lemma that the quadratic ansatzes can be well defined such that not only the ansatzes carry the same oscillations as the periodic solutions at one far field and disappear at the other far field, but also the time-decaying  and spatially non-localized functions in the error terms are multiple of the localized weights.
This lemma will be of benefits for other periodic perturbation problems, especially those with other complicatedly coupling physical effects, to determine the ansatzes, and simplify the calculations.

The rest of the paper is organized as follows. In Section 2, we introduce the notations and prepare the properties of the background waves and the quadratic ansatzes. The constructions of the ansatzes and the main theorems are presented in Section 3. The proof about the stabilities of the shock profile and rarefaction wave are placed in Section 4 and 5, respectively. The proofs of Lemma \ref{Lem-periodic} about the exponential decay of the periodic perturbations and Lemma \ref{lem-error} about the quadratic ansatzes are supplied in the appendix.

%%%%%%%%%%%%%%%%%%%%%%%%%%%%%%%%%%%%%%%%%%%%%%%%%%%%%
%
%%%%%%%%%%%%%%%%%%%%%%%%%%%%%%%%%%%%%%%%%%%%%%%%%%%%%
\section{Preliminaries}
In this section, we first introduce the notations and recall some basic properties of periodic solutions, viscous shock profiles, and rarefaction waves to \eqref{NSP-Ion}, and finally present the abstract lemma about the quadratic ansatzes.

Throughout this paper, $L^p(U)$ and $H^{k}(U)$ ($1\leq p\leq \infty$) denote the classical Lebesgue space, Sobolev space, respectively. Without indications on the sets, the spaces are regarded as the ones of the functions on $\R$ and $\norm{\cdot}$ denotes for $L^2$-norm on $\R$.

\subsection{Decay of periodic solutions} 
Let $[n^\pm,m^\pm,\phi^\pm](x,t)$ be the unique periodic solution to \eqref{NSP-Ion} with the periodic initial data $$[n^\pm,m^\pm](x,0)=[\nb^\pm,\mb^\pm]+[\rho_0^\pm(x),w_0^\pm(x)],$$ 
respectively, where $[\rho_0^\pm,w_0^\pm]$ are periodic functions on $\pi^\pm$ satisfying 
\begin{equation}
	\label{average0}
	\int_0^{\pi^\pm}[\rho_0^\pm,w_0^\pm] \, dx=0.
\end{equation} 
We denote the periodic perturbations $[\rho^\pm,w^\pm,v^\pm,\varphi^\pm]$ as follows for later use:
\begin{equation}\label{def-pert}
	[\rho^\pm,w^\pm,v^\pm,\varphi^\pm](x,t):=[n^\pm,m^\pm,u^\pm,\phi^\pm](x,t)-[\nb^\pm,\mb^\pm,\ub^\pm,\phib^\pm].
\end{equation} 
The periodic solutions to \eqref{NSP-Ion} have exponential decays: 
\begin{lemma}\label{Lem-periodic}
	Consider the Cauchy problem \eqref{NSP-Ion} with the periodic initial data
\begin{equation}
	\label{initial-periodic}
	[n,m](x,0)=[\nb,\mb]+[\rho_0,w_0](x),
\end{equation}  
	where $\nb, \mb,\ub:=\mb/\nb, \phib:=-\ln \nb$ are constants, and $ [\rho_0,w_0](x)\in H^k\big((0,\pi)\big) $ with $ k\geq 1 $ is periodic with period $ \pi>0 $, and satisfies 
	\begin{equation}
		\int_{0}^{\pi} [\rho_0, w_0]dx =0.
	\end{equation}
	Then there exists $ \nu_0>0 $ such that if $\nu:=\norm{[\rho_0,w_0] }_{H^k((0,\pi))} \leq \nu_0,$ there exits a unique periodic solution $ [n,m,u,\phi](x,t) \in C\big(0,+\infty;H^k((0,\pi)) \big) $, and it satisfies that for $t\geq0$,
	\begin{equation}\label{decay-per}
		\begin{aligned}
			&\quad \int_{0}^{\pi} [n-\nb, m-\mb](x,t) dx =0,\\
			&\norm{[n,m,u,\phi]-[\nb,\mb,\ub,\phib]}_{H^k((0,\pi))}(t) \leq C \nu e^{-2\alpha t}, 
		\end{aligned}
	\end{equation}
	where the constants $ C>0 $ and $ \alpha >0 $ are independent of $ \nu $ and $ t. $
\end{lemma}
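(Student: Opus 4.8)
The plan is to establish the lemma by a continuation argument, combining a local-in-time existence theory with a global a priori energy estimate that yields smallness and the exponential rate at once. Because \eqref{NSP-Ion} is translation invariant and the data is $\pi$-periodic, the local solution remains $\pi$-periodic and every estimate can be performed on one period $(0,\pi)$. I would first introduce the perturbations $\rho=n-\nb$, $w=m-\mb$, $v=u-\ub$, $\varphi=\phi-\phib$ and rewrite \eqref{NSP-Ion} as a quasilinear hyperbolic--parabolic system for $[\rho,v]$ (continuity plus the momentum equation, the latter parabolic in $v$ through $\p_x^2u$) coupled to the elliptic relation $\p_x^2\varphi-\nb\varphi=\rho+N(\varphi)$, where $N(\varphi)=-\nb\big(e^{-\varphi}-1+\varphi\big)=O(\varphi^2)$ uses $e^{-\phib}=\nb$. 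Local existence in $C(0,T;H^k)$ for small data is standard for such a coupled system (the Poisson operator $\p_x^2-\nb$ is boundedly invertible since $\nb>0$); the only point worth recording is the preservation of periodicity. All the difficulty is thus concentrated in the a priori estimate.

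Two structural facts organize that estimate. First, the zero average is conserved: integrating the continuity equation over $(0,\pi)$ gives $\tfrac{d}{dt}\int_0^\pi\rho\,dx=0$, while integrating the momentum equation and using the Poisson equation to write $\int_0^\pi n\phi_x\,dx=\int_0^\pi(\p_x^2\phi+e^{-\phi})\phi_x\,dx=0$ (both integrands are exact derivatives of periodic functions) gives $\tfrac{d}{dt}\int_0^\pi w\,dx=0$; hence $\int_0^\pi[\rho,w]\,dx\equiv0$, precisely what the Poincaré inequality will require. Second, the self-consistent field enters the energy with a favorable sign. Testing the continuity equation by $\tfrac{A}{\nb}\rho$ and the momentum equation by $v$, the pressure cross terms cancel through $A\int_0^\pi(\rho v)_x\,dx=0$, and the electric term $-\nb\int_0^\pi v\varphi_x\,dx=\nb\int_0^\pi v_x\varphi\,dx$ converts, via the continuity and Poisson relations, into $\tfrac{d}{dt}\big[\tfrac12\norm{\varphi_x}^2+\tfrac{\nb}{2}\norm{\varphi}^2\big]$. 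At the linear level this yields the identity
\begin{equation*}
	\frac{d}{dt}\Big[\frac{A}{2\nb}\norm{\rho}^2+\frac{\nb}{2}\norm{v}^2+\frac12\norm{\varphi_x}^2+\frac{\nb}{2}\norm{\varphi}^2\Big]+\norm{v_x}^2=0,
\end{equation*}
so the electrostatic energy $\tfrac12\norm{\varphi_x}^2+\tfrac{\nb}{2}\norm{\varphi}^2$ appears positively; the quadratic remainders of the full system, handled by the embedding $H^1((0,\pi))\hookrightarrow L^\infty$ once the solution is small, contribute a term bounded by $C\sqrt{\mathcal{E}}\,\mathcal{D}$, with $\mathcal{E}$, $\mathcal{D}$ as below.

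The main obstacle is that the viscosity dissipates only the velocity, so this identity and its $\p_x^j$-differentiated versions ($0\le j\le k$) control merely $\norm{v_x}_{H^k}$: the hyperbolic variable $\rho$ carries no visible dissipation. To recover it I would run the classical interactive estimate, testing $\p_x^{j-1}$ of the momentum equation by $\p_x^{j}\rho$ for $1\le j\le k$. The pressure term produces the wanted $A\norm{\p_x^j\rho}^2$; the electric contribution $-\nb\int_0^\pi\p_x^{j}\varphi\,\p_x^j\rho\,dx$ is again nonnegative after the Poisson relation; and the dangerous cross term $\int_0^\pi\p_x^{j+1}v\,\p_x^j\rho\,dx$ is absorbed by Young's inequality into the velocity dissipation $\norm{\p_x^{j+1}v}^2$ already available at that order. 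Adding a small multiple of this estimate to the differentiated identities gives
\begin{equation*}
	\frac{d}{dt}\mathcal{E}(t)+\mathcal{D}(t)\le C\sqrt{\mathcal{E}(t)}\,\mathcal{D}(t),\qquad \mathcal{E}\sim\norm{[\rho,v]}_{H^k}^2+\norm{\varphi}_{H^{k+1}}^2,\quad \mathcal{D}\sim\norm{v_x}_{H^k}^2+\norm{\rho_x}_{H^{k-1}}^2.
\end{equation*}
For $\sqrt{\mathcal{E}}\le\nu_0$ small the right-hand side is absorbed, so $\tfrac{d}{dt}\mathcal{E}+\tfrac12\mathcal{D}\le0$. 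The zero-average property together with the Poincaré inequality on $(0,\pi)$ then gives $\norm{\rho}\le C\norm{\rho_x}$ and $\norm{v}\le C\norm{v_x}$ (the mean of $v$ is only quadratically small, a harmless correction), whence $\mathcal{D}\ge c\,\mathcal{E}$ and $\tfrac{d}{dt}\mathcal{E}+c'\mathcal{E}\le0$. This forces $\mathcal{E}(t)\le\mathcal{E}(0)e^{-c't}\le C\nu^2e^{-c't}$, and the decay of $\varphi$ follows from elliptic regularity $\norm{\varphi}_{H^{k+2}}\le C\norm{\rho}_{H^k}$ for the Poisson equation. Choosing $2\alpha=c'/2$ and closing the continuation argument---the decay keeps $\sqrt{\mathcal{E}}\le\nu_0$ for all time once it holds initially---yields both the global existence and \eqref{decay-per}.
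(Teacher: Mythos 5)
Your proposal is correct and follows essentially the same route as the paper's appendix proof: conservation of the zero averages, a basic energy identity in which the electrostatic energy enters with a favorable sign (via the continuity and Poisson relations), cross estimates testing the momentum equation against density derivatives to recover dissipation of $\rho$, and finally the Poincar\'e inequality to convert dissipation into exponential decay, closed by a continuation argument. The only cosmetic difference is that the paper works with exact nonlinear energy functionals (the entropy $\Lambda(n,\nb)$ and $1-e^{-\phit}-\phit e^{-\phit}$) where you use the linearized quadratic energy and absorb the quadratic remainders into $C\sqrt{\mathcal{E}}\,\mathcal{D}$; you are in fact slightly more explicit than the paper on the conservation of $\int_0^\pi w\,dx$ and on the quadratically small mean of $v$.
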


The proof of Lemma \ref{Lem-periodic} is based on standard energy method and Poincar\'{e} inequality, which is left in the appendix.

\subsection{Properties of shock profiles and rarefaction waves}
Now we introduce the basic properties of shock profiles and rarefaction waves. For the viscous shock waves of \eqref{NSP-Ion} with small strengths, we have the following lemma.

\begin{lemma}[\cite{duan2020a}]\label{lem-shock-prop}
	Assume that \eqref{RH} and \eqref{entropy} hold. Then there exist $\delta_0>0$ and $C>0$ such that if \begin{equation}
		\label{small-strength}
		\delta:=\abs{\nb^+ - \nb^-} \leq \delta_0.
	\end{equation}
	 holds, then \eqref{NSP-Ion} admits a unique traveling wave solution (up to a shift) $[n^s,m^s, \phi^s](\xi)=[n^s,m^s,\phi^s] (x-st)$ satisfying 
    \begin{equation}
    	\label{eqn-shock}
    	C^{-1} \p_\xi \phi^s \leq \p_\xi n^s = \frac{(n^s)^2 \p_\xi u^s}{\nb^- \abs{\ub^- - s}} \leq C \p_\xi \phi^s<0, 
    \end{equation}
    for any $\xi\in \R$.
	Moreover, by a suitable choice of the shift, the shock profile $[n^s,m^s,\phi^s]$ satisfies
	\begin{equation}\label{eqn-shock-deriv}
		\abs{ \frac{d^k}{d\xi^k} [n^s-\nb^\pm, u^s-\ub^\pm, \phi^s-\phib^\pm ](\xi)} 
		\leq C_k \abs{\nb^+ - \nb^-}^{k+1} e^{-\theta \abs{(\nb^+ - \nb^-)\xi} }
	\end{equation}
	for $\xi \lessgtr 0$ and $k=0,1,\cdots$, where $C_k$ and $\theta$ are generic positive constant.
\end{lemma}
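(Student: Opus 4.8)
Although this lemma is quoted from \cite{duan2020a}, here is how I would establish it directly. The plan is to realize $[n^s,m^s,\phi^s]$ as a heteroclinic orbit of an autonomous ODE system and to exploit the smallness of $\delta$ through a quasineutral singular-perturbation reduction. Substituting $\xi=x-st$ into \eqref{NSP-Ion} and integrating the continuity equation gives $m^s-sn^s\equiv j$, a constant which by \eqref{RH} takes the same value $j=\nb^\pm(\ub^\pm-s)$ at both far fields; hence $u^s=s+j/n^s$ and $\p_\xi n^s=-\frac{(n^s)^2}{j}\p_\xi u^s$. Because \eqref{RH}--\eqref{entropy} force $j=\nb^-(\ub^--s)<0$ (equivalently $s>\ub^-$), this is exactly the middle identity of \eqref{eqn-shock}. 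The remaining equations close once one notices that the electric-force term is a perfect derivative: substituting $n^s=\p_\xi^2\phi^s+e^{-\phi^s}$ from the Poisson equation gives $n^s\p_\xi\phi^s=\frac12\p_\xi\big((\p_\xi\phi^s)^2\big)-\p_\xi e^{-\phi^s}$, so the momentum equation can be integrated once from $-\infty$. What remains is a closed autonomous system in $(n^s,\phi^s,\p_\xi\phi^s)$ whose equilibria are $P^\pm:=(\nb^\pm,\phib^\pm,0)$, with $\phib^\pm=-\ln\nb^\pm$ by \eqref{cond-farfield-phi}.

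Next I would linearize this system at $P^\pm$. The expectation, which I would verify by a dimension count, is that the Poisson pair $(\phi^s,\p_\xi\phi^s)$ forms a fast hyperbolic block that is \emph{slaved} to the density, enforcing the quasineutral relation $\phi^s\approx-\ln n^s$ to leading order, while the density carries a single slow direction along which the connection is built. Under \eqref{entropy} this slow eigenvalue changes sign between $P^-$ (a source along the slow direction) and $P^+$ (a sink), so the unstable manifold of $P^-$ and the stable manifold of $P^+$ have complementary dimensions, while \eqref{RH} places the two equilibria on a common level set of the first integrals, which is what makes their intersection nonempty. To make the connection rigorous for small $\delta$ I would rescale $\eta=\delta\xi$ and apply geometric singular perturbation theory (or a center-manifold reduction at the coalescing equilibria $\delta=0$): the slow flow reduces to a scalar Burgers-type equation whose monotone kink provides the unique (up to translation) profile for $\delta\le\delta_0$, with $\phi^s$ recovered from the slaving map.

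From the scalar reduction the profile is strictly monotone, which fixes the sign of $\p_\xi n^s$ (a compressive shock with $\nb^->\nb^+$). Differentiating the slaving relation and controlling the correction through the Poisson equation then shows that $\p_\xi\phi^s$ and $\p_\xi n^s$ are comparable with constants uniform in $\delta$; combined with the monotonicity this yields the chain of inequalities in \eqref{eqn-shock}. I expect this uniform comparison to be the main obstacle: it is the one place where the Poisson coupling --- absent in the Navier--Stokes shock analyses --- genuinely enters and must be controlled uniformly down to $\delta\to0$, so that the electric field neither degenerates the monotonicity nor spoils the two-sided bound.

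Finally, for \eqref{eqn-shock-deriv} I would linearize the autonomous system at each endpoint $P^\pm$. Because the whole profile lives on the slow scale $\eta=\delta\xi$, the eigenvalue governing the approach scales like $\theta\delta$, and standard stable/unstable-manifold estimates give $\abs{[n^s-\nb^\pm,u^s-\ub^\pm,\phi^s-\phib^\pm]}\le C\delta\,e^{-\theta\delta\abs{\xi}}$ for $\xi\lessgtr0$. Each further differentiation of the profile equations brings down one extra power of $\delta$ from the slow scale, upgrading the bound to $\abs{\frac{d^k}{d\xi^k}[\,\cdot\,]}\le C_k\,\delta^{k+1}e^{-\theta\delta\abs{\xi}}$ and reproducing \eqref{eqn-shock-deriv} after recalling $\delta=\abs{\nb^+-\nb^-}$.
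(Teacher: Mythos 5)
The first thing to say is that the paper contains no proof of this lemma at all: it is quoted from \cite{duan2020a}, so your proposal can only be judged on its own merits and against the method of that reference. In outline you follow essentially the standard (and, in substance, the reference's) route, and most of your reductions are correct: integrating the continuity equation gives $m^s-sn^s\equiv j$ with $j=\nb^\pm(\ub^\pm-s)$ well defined by the first equation of \eqref{RH}; for a $2$-shock $j<0$, so $-1/j=1/\big(\nb^-\abs{\ub^--s}\big)$ and the middle identity of \eqref{eqn-shock} follows; the identity $n^s\p_\xi\phi^s=\frac12\p_\xi\big((\p_\xi\phi^s)^2\big)-\p_\xi\big(e^{-\phi^s}\big)$ does let you integrate the momentum equation once; the linearization of the resulting three-dimensional system has a hyperbolic $(\phi^s,\p_\xi\phi^s)$-block with $O(1)$ eigenvalues $\approx\pm\sqrt{\nb^\pm}$ and a single $O(\delta)$ slow direction; and the reduced slow flow is a convex scalar profile equation whose monotone heteroclinic, together with endpoint hyperbolicity at rate $O(\delta)$, gives existence, uniqueness up to shift, and the scaling $\delta^{k+1}e^{-\theta\delta\abs{\xi}}$ of \eqref{eqn-shock-deriv}.

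The genuine gap sits exactly at the step you flag as the main obstacle, and it is a sign, not a uniformity issue. In this paper's convention the electron density is $e^{-\phi}$ and \eqref{cond-farfield-phi} reads $\phib^\pm=-\ln\nb^\pm$. The Lax condition \eqref{entropy} forces $\nb^+<\nb^-$, hence $\p_\xi n^s<0$; but then $\phib^+-\phib^-=\ln(\nb^-/\nb^+)>0$, so the potential \emph{increases} across the profile and $\p_\xi\phi^s<0$ in \eqref{eqn-shock} cannot hold. Your own slaving relation shows this quantitatively: differentiating $e^{-\phi^s}=n^s-\p_\xi^2\phi^s$ gives
\[
\p_\xi\phi^s=\frac{\p_\xi^3\phi^s-\p_\xi n^s}{e^{-\phi^s}},
\]
and on the profile $\p_\xi n^s=O(\delta^2)$ while $\p_\xi^3\phi^s=O(\delta^4)$ (pointwise, with the same exponential weight), so $\p_\xi\phi^s$ and $\p_\xi n^s$ have \emph{opposite} signs and comparable magnitudes. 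Carried out honestly, your argument therefore disproves the chain $C^{-1}\p_\xi\phi^s\le\p_\xi n^s\le C\p_\xi\phi^s<0$ rather than proving it; what it actually establishes is $\p_\xi n^s<0<\p_\xi\phi^s$ together with $C^{-1}\abs{\p_\xi\phi^s}\le\abs{\p_\xi n^s}\le C\abs{\p_\xi\phi^s}$. The discrepancy is evidently one of transcription: the cited reference works with the opposite sign convention for the potential (Boltzmann relation $n_e=e^{\phi}$, so there the quasineutral relation makes $\phi^s$ decrease together with $n^s$), and \eqref{eqn-shock} was copied into the present paper without flipping the sign. Since the paper only ever invokes \eqref{eqn-shock} through absolute values (e.g.\ in bounding $\textbf{I}_3$ in the proof of Lemma \ref{lem-0th-shock}), the magnitude version suffices for everything that follows; but your write-up should not claim that the slaving relation yields \eqref{eqn-shock} as stated --- it yields the sign-corrected version, and a correct proof must say so.
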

To study the stability of rarefaction waves, we need to construct a smooth approximation as in \cite{Matsu.N1992,Duan.L2015a,Li.L.W2022} since the rarefaction wave solution $[n^R,u^R,\phi^R]$ in \eqref{rare-sol} is only Lipschitz continuous, which is difficult to perform higher order energy estimates. Precisely, we define approximate rarefaction waves $[n^r,u^r,\phi^r]$ by replacing $w^R(x/t)$ in \eqref{rare-sol} with $w^r(x,t+1)$, where $w^r(x,t)$ is an approximate smooth function to the following Burgers equation 
\begin{equation}\label{wr}
	\begin{cases}
		\p_tw+ww_x=0,\\
		w(x,0)=\dfrac{\wb^++\wb^-}{2}+\dfrac{\wb^+-\wb^-}{2}\tanh(\epsilon x).
	\end{cases}
\end{equation}
and $\epsilon>0$ is a small parameter to be determined. It is easy to check that $[n^r,u^r]$ satisfies \eqref{EP-Ion} and $[n^r,u^r](x,0)\rightarrow [\nb^\pm,\ub^\pm]$ as $x\rightarrow\pm \infty$. We have the following lemma for the properties of approximate rarefaction waves $[n^r,u^r,\phi^r]$. 
\begin{lemma}[\cite{Li.L.W2022}]\label{lem-rare-prop}
	The approximate smooth 2-rarefaction wave $(n^r, u^r, \phi^r)$ given by \eqref{rare-sol} and \eqref{wr}  satisfies the following properties:
	\begin{enumerate}[label = \rm{(\roman*)},ref =\rm{(\roman*)}]
		\item $\p_x n^r>0, \p_x u^r>0$ and $\nb^-<n^r(x,t)<\nb^+$, $\ub^-<u^r(x,t)<\ub^+$ for $x\in \R$ and $t\geq0$.
		\item For any $1\leq p\leq +\infty$, there exists a constant $C_p$ such that for all $t>0$, 
		\begin{align*}
			&\norm{ \p_x [n^r,u^r,\phi^r]}_{L^p} \leq C_p \min\{ \delta_r \epsilon^{1-1/p}, \delta_r^{1/p} t^{-1+1/p} \},\\
			&\norm{ \p_x^2 [n^r,u^r,\phi^r]}_{L^p} \leq C_p \min\{ \delta_r \epsilon^{2-1/p}, \epsilon^{1-1/p} t^{-1} \},\\
			&\norm{ \p_x^3 [n^r,u^r,\phi^r]}_{L^p} \leq C_p \min\{ \delta_r \epsilon^{3-1/p}, \epsilon^{2-1/p} t^{-1} \},
		\end{align*}
		where $\delta_r=\abs{\nb^+ -\nb^-}+\abs{\ub^+ -\ub^-}.$
		
%		\item there exists a constant $C$ such that for all $t>0$, 
%		\begin{align*}
%			\norm{\p_t\p_x\phi^r}+\norm{\p_x\phi^r}\leq C\epsilon(1+t)^{-2}.
%		\end{align*}
		
		\item $\lim\limits_{t\rightarrow +\infty} \sup\limits_{x\in \R} \abs{[n^r,u^r,\phi^r](x,t)-[n^R,u^R,\phi^R](x/t)}=0.$
	\end{enumerate}
\end{lemma}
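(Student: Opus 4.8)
The plan is to reduce the entire statement to the corresponding properties of the scalar field $w^r(x,t+1)$, since $[n^r,u^r,\phi^r]$ are, by \eqref{rare-sol}, explicit smooth functions of $w^r$ alone. Indeed,
\begin{equation*}
u^r=w^r-\sqrt{A+1},\quad n^r=\nb^-\exp\!\left(\frac{u^r-\ub^-}{\sqrt{A+1}}\right),\quad \phi^r=-\ln n^r,
\end{equation*}
so once $w^r$ is understood, the chain rule converts every assertion about $[n^r,u^r,\phi^r]$ into one about $w^r$ and its derivatives.

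First I would analyze $w^r(x,t+1)$, the solution of the Burgers equation \eqref{wr}, by the method of characteristics. The initial datum in \eqref{wr} is smooth, bounded between $\wb^-$ and $\wb^+$, and strictly increasing (since $\wb^+>\wb^-$ for a $2$-rarefaction and $\tanh$ is increasing); hence the characteristic map $x_0\mapsto x_0+w_0(x_0)(t+1)$ is, for every $t\geq0$, an orientation-preserving diffeomorphism of $\R$, no shocks form, and the implicit formula $w^r(x,t+1)=w_0(x_0)$ yields a global smooth solution. The maximum principle gives $\wb^-<w^r<\wb^+$, and differentiating the implicit relation gives
\begin{equation*}
\p_x w^r=\frac{w_0'(x_0)}{1+w_0'(x_0)(t+1)}>0,
\end{equation*}
which already establishes part (i): $\p_x u^r=\p_x w^r>0$, $\p_x n^r=(n^r/\sqrt{A+1})\,\p_x u^r>0$, and the strict bounds $\ub^-<u^r<\ub^+$, $\nb^-<n^r<\nb^+$ follow from the $w^r$-bounds and the monotonicity of the defining maps.

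For part (ii) I would estimate the $L^p$ norms of $\p_x^k w^r$ in the two regimes encoded by the $\min$. Writing $\delta_w=\abs{\wb^+-\wb^-}$, one has $\norm{w_0'}_{L^\infty}\lesssim\delta_w\epsilon$ with transition region of width $\sim\epsilon^{-1}$, which yields the near-field bound $\lesssim\delta_w\epsilon^{1-1/p}$; for large time the Jacobian $1+w_0'(x_0)(t+1)$ in the denominator forces the $t^{-1+1/p}$ decay after changing variables back to $x$, giving $\lesssim\delta_w^{1/p}t^{-1+1/p}$. The higher derivatives $\p_x^2 w^r,\p_x^3 w^r$ are handled by repeatedly differentiating the implicit relation, and the stated estimates for $[n^r,u^r,\phi^r]$ then follow from the chain rule together with the uniform bounds on $n^r$ from part (i), so that the compositions and all their derivatives are controlled by those of $w^r$. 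Part (iii) follows from the classical uniform convergence of the smoothed Burgers solution $w^r(x,t+1)$ to the exact rarefaction profile $w^R(x/t)$ as $t\to\infty$, after which the continuous formulas \eqref{rare-sol} transfer the convergence to $[n^r,u^r,\phi^r]\to[n^R,u^R,\phi^R]$.

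The main obstacle is the sharp $L^p$ analysis in part (ii): one must split the $x$-integration into the transition layer and its complement and track how the characteristic Jacobian $1+w_0'(x_0)(t+1)$ interpolates between the $\epsilon$-controlled initial scale and the $t^{-1}$ spreading of the rarefaction fan, which is precisely what produces the $\min$ structure of the bounds. All other steps are routine once this Burgers estimate is in hand.
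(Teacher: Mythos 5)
Your proposal is correct, but note that the paper itself gives no proof of this lemma at all: it is quoted verbatim from the cited reference \cite{Li.L.W2022}, whose argument (going back to Matsumura--Nishihara's construction of smoothed rarefaction waves) is exactly the characteristics analysis you describe -- the explicit Jacobian $1+w_0'(x_0)(t+1)$, the bounds $|w_0^{(k+1)}|\lesssim \epsilon^k w_0'$ special to the $\tanh$ datum, and the chain rule transferring everything to $[n^r,u^r,\phi^r]$. So your route coincides with the standard one behind the citation; the only step you gloss over is that the higher derivatives of $n^r,\phi^r$ produce products like $(\p_x u^r)^2$, whose $L^p$ bounds require combining the $L^\infty$ and $L^p$ estimates of $\p_x w^r$ (and, for the $t^{-1}$ decay of $\p_x^2 w^r$, splitting at the sign change of $w_0''$), all of which works out with the ingredients you already have.
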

%\begin{proof}
%	The proof of (i),(ii),(iv) refer to \cite{Duan.L2015a}. For (iii), it follows from $\phi^r=-\ln n^r$ that
%	\begin{equation}
%		\begin{aligned}
%			\norm{\p_t\p_x\phi^r}^2&\lesssim\norm{[\p_x^2n^r,\p_x^2u^r]}^2+\norm{[\p_xn^r,\p_xu^r]}_{L^4}^4\\
%			&\lesssim\norm{[\p_x^2n^r,\p_x^2u^r]}^2+\norm{[\p_xn^r,\p_xu^r]}^3\norm{[\p_x^2n^r,\p_x^2u^r]}\\
%			&\lesssim  \epsilon (1+t)^{-2} + \epsilon^{\frac{1}{2}} \cdot ((1+t)^{-\frac{1}{2}})^2  \cdot \epsilon^{\frac{1}{2}} (1+t)^{-1} \\
%			&\lesssim  \epsilon (1+t)^{-2},
%		\end{aligned}
%	\end{equation}
%	and
%		\begin{equation}
%		\label{lem-rare-prop-phi}
%		\begin{aligned}
%			\norm{\p_x^2 \phi^r}^2&\lesssim \norm{\p_x^2 n^r}^2 +\norm{\p_x n^r(t)}_{L^4}^4 \\
%			&\lesssim  \norm{\p_x^2 n^r}^2 +\norm{\p_x n^r}^3 \norm{\p_x^2 n^r} \\
%			&\lesssim  \epsilon (1+t)^{-2} + \epsilon^{\frac{1}{2}} \cdot ((1+t)^{-\frac{1}{2}})^2  \cdot \epsilon^{\frac{1}{2}} (1+t)^{-1} \\
%			&\lesssim  \epsilon (1+t)^{-2}.
%		\end{aligned}
%	\end{equation}
%\end{proof}

\subsection{Properties of error terms}
Before introducing the detailed formulas of the ansatzes, we prepare the following notations and the lemma to distinguish the non-decaying terms and good-decaying terms from the error terms of the equations of the quadratic ansatzes. It could be benefit to other non-localized perturbation problems.

Throughout the whole paper, we denote $C$ as the generic constant, $\sR_{\rm Ti}$ as the generic terms satisfying
\begin{equation}
	\label{decay-Ti}
	\tag{RT}
	\norm{\sR_{\rm Ti}}_{L^{\infty}} \leq C \nu e^{-\alpha t},
\end{equation} 
which decay exponentially to $0$ in time but may be not integrable in space (the periodic perturbations are  $\sR_{\rm Ti}$, i.e. \eqref{decay-per});
denote $\sR_{\rm Sp}$ as the generic terms satisfying
\begin{equation}
	\label{decay-Sp}
	\tag{RS}
	\norm{\sR_{\rm Sp}}_{L^p} \leq C (1+t)^{1/p}
\end{equation} 
for any $1\leq p<\infty$, which are integrable in space and may not decay in time;
denote $\sR$ as the generic terms in the quadratic form $\sR=\sR_{\rm Ti} \sR_{\rm Sp}$, and thus $\sR$ are good-decaying terms:
\begin{equation}
	\label{decay-TiSp}
	\norm{\sR}_{L^p}\leq C \nu e^{-\alpha t}.
\end{equation}
We define $\sG$ as a family of regular and bounded weight functions on $x\in \mathbb{R}$ such that 
\begin{equation} \label{def-sG}
	\begin{aligned}
		&\text{if}~ g\in \sG,~ \text{then} \lim\limits_{x\rightarrow+\infty}g=1, \lim\limits_{x\rightarrow-\infty}g=0,~\text{and}~(1-g)g,~ \p_x g   ~\text{satisfy \eqref{decay-Sp}};\\
		&\text{if}~ g_1, g_2\in \sG,~ \text{then both}~ (1-g_1)g_2~\text{and}~ g_1-g_2 ~\text{satisfy \eqref{decay-Sp}}.
	\end{aligned}
\end{equation}
In particular, $\sigma$ defined in \eqref{weight-shock} below and its shifts  $\sigma_{\X},\sigma_{\Y}$ with $\X,\Y$ given in Lemma \ref{Lem-shift} all belong to $\sG$.

The following lemma will show that after the operations of compositions, derivations, linear combinations and multiplications on the quadratic ansatzes, the non-decaying terms are the summation of the background profile and the quadratic parts which carries the same oscillations at the far fields. And the remaining terms are in the form as $\sR_{\rm Ti}\sR_{\rm Sp} $, and thus could be well controlled. 

Just in the following lemma and corollary, we temporarily redefine the notations and give the assumptions:
\begin{center}
	\begin{tabular}{|c|l|}
		$\ub^\pm $ ($\nb^\pm$) & constants \\
		$v^\pm$ ($\rho^\pm$) &  functions on $(x,t)$, and $ v^\pm, \p_x v^\pm$ ($ \rho^\pm, \p_x \rho^\pm$) belong to $\sR_{\rm Ti}$, i.e. satisfy \eqref{decay-Ti} \\
		$u^\pm$  ($n^\pm$) &  $u^\pm:=\ub^\pm+v^\pm$ ($n^\pm:=\nb^\pm+\rho^\pm$) \\
		$u^*$  ($n^*$) &   function on $(x,t)$ with values between $\ub^\pm$  ($\nb^\pm$), $ u^* \rightarrow \ub^\pm$ ($ n^* \rightarrow \nb^\pm$)  as $x\rightarrow\pm\infty$ \\
		$u^\sha$  ($n^\sha$) &   function on $(x,t)$ with values between $u^\pm$  ($n^\pm$), $ u^\sha \rightarrow u^\pm$ ($ n^\sha \rightarrow n^\pm$)  as $x\rightarrow\pm\infty$ \\
		$g_0$  ($\tilde{g}_0$) &  $g_0:=(u^*-\ub^-)/(\ub^+-\ub^-)$ ($\tilde{g}_0:=(n^*-\nb^-)/(\nb^+-\nb^-)$) $\in \sG$ 
	\end{tabular}
\end{center}
where the last assumptions that $g_0, \tilde{g}_0 \in \sG$ actually require that $u^*$ and $n^*$ are well decaying to $\ub^\pm$ and $\nb^\pm$ as $x\rightarrow \pm\infty$, respectively.

\begin{lemma}\label{lem-error}
	Given the definitions and assumptions above, if there are weight functions $g, \tilde{g}, g_1$ belong to $\sG$ and
	\begin{equation}
		\label{generic-usha}
		u^\sha- u^*-v^-(1-g)-v^+ g=\sR, \quad n^\sha- n^*-\rho^-(1-\tilde{g})-\rho^+ \tilde{g}=\sR,
	\end{equation}
	then the following conclusions hold:
	\begin{enumerate}[label = \rm{(\roman*)},ref =\rm{(\roman*)}]
		\item 
		For any $C^2$ function $f$, there hold 
		\begin{equation}
			\label{lem-error-1}
			f(u^\sha)-f(u^*)-(f(u^-)-f(\ub^-))(1-g_1)-(f(u^+)-f(\ub^+))g_1=\sR,
		\end{equation}
		\begin{equation}
			\label{lem-error-2}
			f(u^\sha)-f(u^-)(1-g_1)-f(u^+)g_1=\sR_{\rm Sp},
		\end{equation}
		\eqref{lem-error-2} equals to $\sR$ if  $g_1=\dfrac{f(u^*)-f(\ub^-)}{f(\ub^+)-f(\ub^-)}$.
		
		\item Suppose that the derivative of the error term of \eqref{generic-usha} is still in the form $\sR_{\rm Ti} \sR_{\rm Sp}$, i.e. satisfies \eqref{decay-TiSp}. Then
		\begin{equation}
			\label{lem-error-3}
			\p_xu^\sha-\p_xu^*-\p_x u^-(1-g_1)-\p_x u^+g_1=\sR.
		\end{equation}
		
		\item  For any $\alpha, \beta \in \R$,
		\begin{equation}
			\label{lem-error-3.5}
			\alpha n^\sha + \beta u^\sha-\alpha n^*-\beta u^*-(\alpha\rho^- + \beta v^- )(1-g_1)-(\alpha\rho^+ + \beta v^+ )g_1=\sR,
		\end{equation}
		\begin{equation}
			\label{lem-error-4}
			n^\sha u^\sha-n^*u^*-(n^- u^- -\nb^-\ub^-)(1-g_1)-(n^+ u^+ -\nb^+\ub^+)g_1=\sR.
		\end{equation}
	\end{enumerate}
\end{lemma}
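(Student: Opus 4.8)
The plan is to treat each identity as a statement about "error-type" quantities and to reduce everything to the two algebraic operations already packaged in the definitions of $\sR$, $\sR_{\rm Ti}$, $\sR_{\rm Sp}$, and $\sG$. The governing principle throughout is that a product $\sR_{\rm Ti}\sR_{\rm Sp}$ decays like $\nu e^{-\alpha t}$ in every $L^p$, so my only job in each part is to rewrite the left-hand side as a finite sum of such products, using Taylor expansion for the nonlinear parts and the closure properties of $\sG$ (namely that $(1-g)g$, $\p_x g$, $g_1-g_2$, $(1-g_1)g_2$ all satisfy \eqref{decay-Sp}) to absorb the weight-function discrepancies.

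First I would prove part (i). Write $\tilde g := (f(u^*)-f(\ub^-))/(f(\ub^+)-f(\ub^-))$; since $u^*$ ranges between $\ub^\pm$ and decays to $\ub^\pm$ at $\pm\infty$ with $g_0\in\sG$, a Taylor expansion of $f$ gives $\tilde g\in\sG$. For \eqref{lem-error-1} I would Taylor-expand $f(u^\sha)=f(u^*)+f'(u^*)(u^\sha-u^*)+O((u^\sha-u^*)^2)$ and, separately, $f(u^\pm)-f(\ub^\pm)=f'(u^*)v^\pm+O((v^\pm)^2)$. Substituting the decomposition \eqref{generic-usha} for $u^\sha-u^*$ and matching the $f'(u^*)v^\pm$ terms against the weighted periodic parts, the linear contributions cancel up to the difference of two weight functions $g-g_1$ (which is $\sR_{\rm Sp}$) multiplied by $f'(u^*)v^\pm$ (which is $\sR_{\rm Ti}$), i.e.\ an $\sR$; the quadratic remainders are products of $\sR_{\rm Ti}$ factors and are $\sR$ as well, so the whole expression is $\sR$. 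Identity \eqref{lem-error-2} then follows from \eqref{lem-error-1} by adding and subtracting $f(\ub^\pm)$ weighted by $1-g_1$ and $g_1$; the leftover $f(\ub^-)(1-g_1)+f(\ub^+)g_1-f(u^*)$ is generally only $\sR_{\rm Sp}$, but collapses to $\sR$ precisely when $g_1=\tilde g$, giving the final clause.

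Next, part (iii) is linear and hence the easiest: \eqref{lem-error-3.5} is an immediate consequence of the two identities in \eqref{generic-usha} by taking the linear combination $\alpha(\cdot)+\beta(\cdot)$ and replacing $g,\tilde g$ by the single weight $g_1$, the replacements costing only $g_1-g$ and $g_1-\tilde g$ factors, which are $\sR$ against the $\sR_{\rm Ti}$ periodic parts. For the product identity \eqref{lem-error-4} I would write $n^\sha u^\sha - n^*u^* = n^*(u^\sha-u^*)+u^*(n^\sha-n^*)+(n^\sha-n^*)(u^\sha-u^*)$, feed in \eqref{generic-usha}, and check that the bilinear cross term is a product of two $\sR_{\rm Ti}$-type quantities (hence $\sR$), while the two linear pieces reproduce $n^*v^\pm$ and $u^*\rho^\pm$; comparing with the expansion $n^\pm u^\pm-\nb^\pm\ub^\pm=\nb^\pm v^\pm+\ub^\pm\rho^\pm+\rho^\pm v^\pm$ and absorbing the weight mismatches $g-g_1$, $\tilde g-g_1$ as before finishes it.

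The main obstacle is part (ii), the derivative identity \eqref{lem-error-3}. Differentiating \eqref{generic-usha} produces $\p_x u^\sha-\p_x u^*-\p_x v^-(1-g)-\p_x v^+g+v^-\p_x g-v^+\p_x g=\p_x(\sR)$, so beyond the hypothesis that $\p_x(\sR)$ is again $\sR_{\rm Ti}\sR_{\rm Sp}$, the delicate point is the two boundary-type terms $v^\mp\p_x g$: here $v^\mp$ is $\sR_{\rm Ti}$ and $\p_x g$ is $\sR_{\rm Sp}$ by the definition \eqref{def-sG}, so each is individually an $\sR$, and the residual weight swap $\p_x u^\pm(1-g)-\p_x u^\pm(1-g_1)=\p_x v^\pm(g_1-g)$ is again $\sR_{\rm Ti}\cdot\sR_{\rm Sp}$. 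The care required is precisely in verifying that no term is left that is merely $\sR_{\rm Sp}$ (non-decaying in time) rather than a genuine $\sR$ product; this is exactly where the extra assumption on $\p_x(\sR)$ in (ii) is used, and I would flag it as the step most prone to a hidden non-decaying remainder.
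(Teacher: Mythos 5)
Your treatment of part (ii) and of identity \eqref{lem-error-3.5} is correct and coincides with the paper's own argument. The gap lies in how you handle the quadratic terms in part (i) and in \eqref{lem-error-4}: you assert twice that a product of two $\sR_{\rm Ti}$ factors is an $\sR$. By the definitions \eqref{decay-Ti}--\eqref{decay-TiSp} this is false: an $\sR$ term must be spatially integrable, whereas a product such as $(v^-)^2$ or $\rho^- v^-$ decays exponentially in time but remains periodic (hence non-integrable) in $x$, so it is an $\sR_{\rm Ti}$ and nothing more. Concretely, your Taylor remainder $O((u^\sha-u^*)^2)$ in (i) contains $(v^-)^2(1-g)^2$, and your cross term $(n^\sha-n^*)(u^\sha-u^*)$ in \eqref{lem-error-4} contains $\rho^- v^-(1-\tilde g)(1-g)$; neither is $\sR$. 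Worse, discarding the cross term makes your bookkeeping for \eqref{lem-error-4} inconsistent: the subtracted quantities $n^\pm u^\pm-\nb^\pm\ub^\pm$ contain the non-localized pieces $\rho^\pm v^\pm$, weighted by $(1-g_1)$ and $g_1$, and once the cross term has been thrown away nothing remains to cancel them. The whole point of the lemma is exactly this pairing: the non-localized quadratic products must be matched against each other, so that only mismatches carrying integrable weights ($g-g_1$, $(1-\tilde g)g$, $(1-g)g$, $g_0(1-g_1)$, \ldots) survive. The paper enforces this by writing everything through the exact difference quotients $a(u,v)=\int_0^1 f'(v+\theta(u-v))\,d\theta$ and $b(u,v)=\int_0^1 f''(v+\theta(u-v))\,d\theta$, which guarantees that every factor that fails to decay in space (such as $g_0$, $v^\pm$, $\rho^\pm$) appears multiplied by an $\sR_{\rm Sp}$ weight.

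Two further problems in part (i). First, the expansion $f(u^\pm)-f(\ub^\pm)=f'(u^*)v^\pm+O((v^\pm)^2)$ is wrong as stated: the correct linearization point is $\ub^\pm$, and the discrepancy $(f'(u^*)-f'(\ub^\pm))v^\pm$ is first order in $v^\pm$ with a coefficient of size $\abs{u^*-\ub^\pm}$, not quadratic; it becomes $\sR$ only after multiplication by $(1-g_1)$ resp.\ $g_1$, using that $(1-g_1)g_0$ and $(1-g_0)g_1$ are $\sR_{\rm Sp}$ --- a step your write-up does not supply. Second, even if one repairs the cancellation of the quadratic remainders, your scheme ends up comparing $f''$ at two different base points (essentially $f''(u^*+\theta(u^\sha-u^*))$ against $f''(\ub^-+\theta v^-)$), and bounding that difference by $g_0$ requires $f''$ to be Lipschitz; the lemma only assumes $f\in C^2$. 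The paper's argument never differences $f''$ at all --- the functions $a$ and $b$ are only evaluated, not compared --- which is precisely why $C^2$ regularity suffices there.
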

The proof of Lemma \ref{lem-error} is to verify that the remaining terms are non-localized functions multiple of integrable weights, like $\p_x g$ or $(1-g)g$ for $g\in \sG$. The proof is tedious but not difficult, and thus it is left in the appendix.

It should be noticed that the weight function $g_1$ in \eqref{lem-error-1}-\eqref{lem-error-4} is only required to satisfy the property \eqref{def-sG}, and it may not be related with $g_0, \tilde{g}_0, g,$ and $\tilde{g}$, i.e. related with $u^*, u^\sha, n^*, $ and $n^\sha$. Moreover, all the terms in the equation \eqref{NSP-Ion} could be several combinations of the above four operations. Here is an example to illustrate it.
\begin{corollary}
	Under the assumptions in Lemma \ref{lem-error}, it holds that 
	\begin{equation}
		\label{lem-error-test}
		\p_x\left(\frac{(u^\sha)^2}{n^\sha}\right)-\p_x\left(\frac{(u^*)^2}{n^*}\right)-\p_x\left(\frac{(u^-)^2}{n^-}\right)(1-g_1)-\p_x\left(\frac{(u^+)^2}{n^+}\right)g_1=\sR.
	\end{equation}
\end{corollary}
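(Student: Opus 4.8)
The plan is to derive \eqref{lem-error-test} by repeated application of the conclusions of Lemma \ref{lem-error}, treating the composite expression $\p_x\big((u^\sha)^2/n^\sha\big)$ as built up from simpler pieces through the four admissible operations (composition, linear combination, multiplication, differentiation), so that at each stage the ``leading'' non-decaying structure is preserved modulo an $\sR=\sR_{\rm Ti}\sR_{\rm Sp}$ error. Since every error we produce is of the form $\sR$ and $\sR$ is closed under the relevant manipulations (being controlled by \eqref{decay-TiSp}, uniformly small times exponentially decaying), the accumulated remainder will again be $\sR$.

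First I would handle the composition step. Applying \eqref{lem-error-1} with $f(u)=u^2$ gives
\begin{equation}
(u^\sha)^2-(u^*)^2-\big((u^-)^2-(\ub^-)^2\big)(1-g_1)-\big((u^+)^2-(\ub^+)^2\big)g_1=\sR,
\end{equation}
and, writing $n^\sha=\nb^-+(\nb^+-\nb^-)\tilde g_0+(\text{error})$, one gets the analogous reciprocal identity by taking $f(n)=1/n$ in \eqref{lem-error-1} (legitimate since $n^\sha$ stays bounded away from $0$, so $f\in C^2$ on the relevant range). Next I would combine these via the product/quotient structure. The cleanest route is to observe that $(u^\sha)^2/n^\sha$ is itself of the ``$\sha$-type'': it takes values between $(\ub^\pm)^2/\nb^\pm$, tends to these limits at $\pm\infty$, and by the quotient analogue of the multiplication identity \eqref{lem-error-4} it satisfies a representation of the form \eqref{generic-usha}, namely
\begin{equation}
\frac{(u^\sha)^2}{n^\sha}-\frac{(u^*)^2}{n^*}-\Big(\frac{(u^-)^2}{n^-}-\frac{(\ub^-)^2}{\nb^-}\Big)(1-g_1)-\Big(\frac{(u^+)^2}{n^+}-\frac{(\ub^+)^2}{\nb^+}\Big)g_1=\sR.
\end{equation}
This reduces the whole problem to showing that once a composite quantity $Q:=(u^\sha)^2/n^\sha$ obeys such an $\sR$-representation, its spatial derivative obeys the corresponding differentiated representation \eqref{lem-error-test}. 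That is exactly the content of conclusion \eqref{lem-error-3}, applied to $Q$ in place of $u^\sha$.

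The main obstacle, and the step requiring genuine care, is verifying the hypothesis of part (ii): that the \emph{derivative} of the $\sR$-error in the representation of $Q$ is again of the form $\sR_{\rm Ti}\sR_{\rm Sp}$ satisfying \eqref{decay-TiSp}. This is not automatic from $\sR$ alone, since differentiating a product $\sR_{\rm Ti}\sR_{\rm Sp}$ produces $\p_x\sR_{\rm Ti}\cdot\sR_{\rm Sp}+\sR_{\rm Ti}\cdot\p_x\sR_{\rm Sp}$, and one must check that both factors' derivatives stay in the right classes. Here I would lean on the structural definition \eqref{def-sG}: the spatial-weight errors are multiples of $\p_x g$ or $(1-g)g$, whose derivatives again satisfy \eqref{decay-Sp}, while $\p_x v^\pm,\p_x\rho^\pm$ remain in $\sR_{\rm Ti}$ by the standing assumption, and the background derivatives $\p_x u^*,\p_x n^*$ contribute only through $g_0,\tilde g_0\in\sG$. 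The quotient $1/n^\sha$ needs an extra remark: differentiating it introduces $\p_x n^\sha/(n^\sha)^2$, so one must confirm $\p_x n^\sha$ decomposes compatibly, which follows by applying \eqref{lem-error-3} to $n^\sha$ itself. Once these derivative-regularity checks are in place, \eqref{lem-error-3} delivers \eqref{lem-error-test} directly, and the corollary is proved.
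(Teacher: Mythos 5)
Your proof is correct and follows essentially the same route as the paper's: apply the composition identity \eqref{lem-error-1} with $f_1(n)=n^{-1}$ and $f_2(u)=u^2$, combine the two via the multiplication identity \eqref{lem-error-4}, and conclude with the differentiation identity \eqref{lem-error-3} applied to the composite quantity (the paper's proof cites \eqref{lem-error-2} for this last step, apparently a typo for \eqref{lem-error-3}). Your additional verification of the hypothesis of part (ii) — that the differentiated error remains of the form $\sR_{\rm Ti}\sR_{\rm Sp}$, checked through the closure properties in \eqref{def-sG} — is a point the paper's one-sentence proof leaves implicit, and you handle it correctly.
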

\begin{proof}
	\eqref{lem-error-test} could be obtained by applying the operation of compositions,  \eqref{lem-error-1}, to $n$ with $f_1(n)=n^{-1}$ for $n>0$, and to $u$ with $f_2(u)=u^{2}$, and then applying the operation of multiplication, \eqref{lem-error-4}, to $n^{-1}$ and $u^2$, and finally applying the operation of derivation, \eqref{lem-error-2}, to $n^{-1} u^2$.
\end{proof}

\section{Ansatzes and main results}
In this section, we will construct the ansatzes, reformulate the problems and state the main results.
\subsection{Ansatz and main result for shock profiles}
\subsubsection{Ansatz and the equations}
Assume that \eqref{RH}, \eqref{entropy} and \eqref{small-strength} hold such that the shock profile exists. 
To study the stability of shock profiles under periodic perturbations at far fields, we impose the initial data for \eqref{NSP-Ion} with the condition more than \eqref{cond-farfield}:
\begin{equation}
	\label{init-shock}
		[n_0-n^s(\cdot,0) -\rho_0^\pm, m_0-m^s(\cdot,0)-w_0^\pm] \in L^1(\R^\pm).
\end{equation} 
where $[\rho_0^\pm,w_0^\pm]$ are periodic functions with zero average \eqref{average0}.

Note that we can decompose $n^s$ into 
 \begin{equation}
 	n^s=\nb^-(1-\sigma)+\nb^+\sigma,\quad 
 \end{equation}
 with
 \begin{equation}
 	\label{weight-shock}
 	\sigma=\dfrac{n^s-\nb^-}{\nb^+-\nb^-},
 \end{equation} 
 Moreover, by using $s=\dfrac{m^s-\mb^-}{n^s-\nb^-}=\dfrac{\mb^+-\mb^-}{\nb^+-\nb^-}$, which follows from the Rankine-Hugoniot  condition of viscous shock and \eqref{RH},we can also decompose $m^s$ into 
 \begin{equation}
 	m^s=\mb^-(1-\sigma)+\mb^+\sigma.
 \end{equation}
Throughout the theorem and the proof of shock profiles, we denote $\X=\X(t),\Y=\Y(t)$ with the property that $\abs{\X}, \abs{\Y}\leq C$ for all $t\geq 0$ as the shift functions to be determined; see \eqref{ode-shift}, \eqref{def-shift0-1} and \eqref{def-shift0-2} below.
We use the convection that when $\X$ or $\Y$ appears in the subscript of a function, it means that the function is shifted by $\X$ or $\Y$ and the background shock, for instance,
$$\sigma_{\X}:=\sigma(x-st-\X);$$
while when $x$ or $t$ is in the subscript, it means that the function is differentiated by $x$ or $t$. 

Inspired by \cite{Xin.Y.Y2021,Huang.Y2021} and Lemma \ref{lem-error}, we construct the ansatzes as
\begin{equation}
	\label{def-ansatz-shock}
	\begin{aligned}
		n^\sha&:=n^-(1-\sigma_{\X})+ n^+\sigma_{\X}
		=n^s_{\X}+\rho^-(1-\sigma_{\X})+\rho^+\sigma_{\X},\\
		m^\sha&:=m^-(1-\sigma_{\Y}) + m^+\sigma_{\Y}
		=m^s_{\Y}+w^-(1-\sigma_{\Y})+w^+\sigma_{\Y},\\ 
		u^\sha&:=m^\sha / n^\sha,\\
		\phi^\sha&:=\phi^s_{\X}+\varphi^-(1-\sigma_{\X})+\varphi^+\sigma_{\X},
	\end{aligned}
\end{equation}
where $[\rho^\pm,w^\pm,v^\pm,\varphi^\pm]$ are defined in \eqref{def-pert}. It should be noticed that $\phi^\sha\neq \phi^-(1-\sigma_{\X})+\phi^+\sigma_{\X}.$ We refer to Remark \ref{rem-ansatz} below for the discussions on the setting of $\phi^\sha$.

Define the perturbation 
\begin{equation}
	\label{def-pert-shock}
	[\nt,\mt,\ut,\phit]=[n-n^\sha,m-m^\sha,u-u^\sha,\phi-\phi^\sha],
\end{equation}
and denote $$[\nsinf,\msinf,\usinf,\phisinf]=[n^s_{\X_\infty}, m^s_{\X_\infty}, u^s_{\X_\infty}, \phi^s_{\X_\infty}]$$ for simplicity.
Then with the aid of Lemma \ref{lem-error}, we obtain that $[\nt,\mt,\phit]$ satisfies
\begin{equation}
	\label{eqn-pert-shock}
	\begin{cases}
		\p_t\nt+\p_x\mt=h_1, \\
		\p_t\mt+\p_x \left(\dfrac{m^2}{n}-\dfrac{(m^\sha)^2}{n^\sha} + (A+1)\tilde{n} \right)
		-\p_x^2\left(\dfrac{m}{n}-\dfrac{m^\sha}{n^\sha}\right)\\
		\qquad
		-\p_x\left(\frac{1}{2}(\p_x \phi)^2-\frac{1}{2}(\p_x \phi^\sha)^2+\p_x^2 \phit \right)=h_2,\\
		\p_x^2\phit-\nt + (e^{-\phi} -e^{-\phi^\sha})=h_3,
	\end{cases}
\end{equation} 
where the error terms $h_1, h_2, h_3$ in \eqref{eqn-pert-shock} are 
\begin{equation}
	\label{eqn-error-shock}
	\begin{aligned}
		h_1=&\left\{ - \jump{n}(s+ \X') + \jump{m} \right\} \p_x\sigma_{\X} + \p_x (m^s_{\Y}-m^s_{\X}) + \p_x \sR, \\
		h_2=& \Big\{ - \jump{m}(s+\Y') + \jump{\frac{m^2}{n}+(A+1)n - \p_x\frac{m}{n}-\frac{1}{2} (\p_x \phi)^2-\p_x^2 \phi} \Big\} \p_x \sigma_{\Y} \\
		&\qquad + \p_x \Big\{ \Big( \frac{(m^s_{\Y})^2}{n^s_{\X}} + (A+1)n^s_{\X}-\p_x \frac{m^s_{\Y}}{n^s_{\X}} -\frac{1}{2} (\phi^s_{\X})^2 -\p_x^2 \phi^s_{\X} \Big) \\
		&\qquad -\Big( \frac{(m^s_{\Y})^2}{n^s_{\Y}} + (A+1)n^s_{\Y}-\p_x \frac{m^s_{\Y}}{n^s_{\Y}} -\frac{1}{2} (\phi^s_{\Y})^2 -\p_x^2 \phi^s_{\Y} \Big)\Big\} + \p_x \sR,\\
		h_3=&\sR.
		%	h_1=&\left\{ - (n^+ - n^-)(s+ \X') + (m^+ - m^-) \right\} \p_x\sigma_{\X} \\
		%	&\qquad + \p_x (m^s_{\Y}-m^s_{\X}) + \p_x \sR, \\
		%	%%
		%	h_2=& \Big\{ - (m^+ -m^-)(s+\Y') + \Big(\frac{(m^+)^2}{n^+}+(A+1)n^+ - \p_x\frac{m^+}{n^+}-\frac{1}{2} (\p_x \phi^+)^2-\p_x^2 \phi^+ \Big)  \\
		%	&\qquad  - \Big(\frac{(m^-)^2}{n^-}+(A+1)n^- - \p_x\frac{m^-}{n^-}-\frac{1}{2} (\p_x \phi^-)^2-\p_x^2 \phi^- \Big)\Big\} \p_x \sigma_{\Y} \\
		%	&\qquad + \p_x \Big\{ \Big( \frac{(m^s_{\Y})^2}{n^s_{\Y}} + (A+1)n^s_{\X}-\p_x \frac{m^s_{\Y}}{n^s_{\X}} -\frac{1}{2} (\phi^s_{\X})^2 -\p_x^2 \phi^s_{\X} \Big) \\
		%	&\qquad \Big( \frac{(m^s_{\Y})^2}{n^s_{\Y}} + (A+1)n^s_{\Y}-\p_x \frac{m^s_{\Y}}{n^s_{\Y}} -\frac{1}{2} (\phi^s_{\Y})^2 -\p_x^2 \phi^s_{\Y} \Big)\Big\} + \p_x \sR.
	\end{aligned}
\end{equation}

The details to derive \eqref{eqn-pert-shock} and \eqref{eqn-error-shock} are left in the appendix Section \ref{subsect-error}. It should be noticed that here $\jump{n}=n^+ - n^-$, $\jump{m}=m^+ - m^-$ are still functions, not just constants. Moreover, thanks to Lemma \ref{lem-error}, good-decaying terms in $h_1$, $h_2$, and $h_3$ are quadratic in the form $\p_x \sR=\p_x (\sR_{\rm Ti} \sR_{\rm Sp})$ or $\sR=\sR_{\rm Ti} \sR_{\rm Sp}$ (we keep $\p_x \sR$ in $h_1$ and $h_2$ rather than $\sR$ since $\p_x \sR$ is of zero mass, which is important for the derivation of $\X, \Y$, \eqref{cond-shift-dt2} and \eqref{ode-shift}, and the estimates of error terms of anti-derivatives, Lemma \ref{h-estimates}). For the stability of shock profiles, $\sR_{\rm Ti}$ are the periodic functions and their derivatives, and $\sR_{\rm Sp}$ are the derivatives of the background shock profiles. In particular, such forms are kept after derivations for several times due to Lemma \ref{Lem-periodic}, \ref{lem-shock-prop} and \ref{lem-rare-prop}. 
The rest terms in $h_1$ and $h_2$ are non-decaying for general $\X, \Y$, but finally decay well with the special settings of $\X, \Y$ below.

\vline

%The quadratic ansatz $[n^\sha, m^\sha, \phi^\sha]$ in \eqref{def-ansatz-shock} carries the same oscillations as the periodic solutions at far fields, and its derivatives decays at far fields just as the shock profiles. The setting of the ansatz of $\phi^\sha$ should be careful: 
\begin{remark}
	\label{rem-ansatz}
	$\phi^\sha$ could not be defined as $-\ln n^\sha$ as \cite{Duan.L2015a}, nor $\phi^- (1-\sigma_\X)+\phi^+ \sigma_\X$ as \cite{Xin.Y.Y2021} and \cite{Huang.Y2021}.
	Indeed, if $\phi^\sha$ was set as $-\ln n^\sha$, then the difference of the equations of $\phi$ and $\phi^\sha$, i.e. $h_3$ in the above equation, would have $\p_x^2 \phi^\sha$, which has non-decaying periodic perturbations at far fields. 
	If $\phi^\sha$ was set as $\phi^- (1-\sigma_\X) + \phi^+ \sigma_\X$, then $h_3$ would have 
	$$e^{-\phi^\sha}-e^{-\phi^-}(1-\sigma_\X)-e^{-\phi^+}\sigma_\X,$$ which may not decay in time as shown in \eqref{lem-error-2}. 
	For the scalar conservation laws \cite{Xin.Y.Y2021} or the isentropic Navier--Stokes \cite{Huang.Y2021}, all the equations can be written in conservative form, and there is no term like $e^{-\phi}$, and thus the last ansatzes work for their problems.
\end{remark} 

\subsubsection{Settings of the shifts}
We define the anti-derivatives of $[\nt,\mt]$ as 
\begin{equation}
	\label{def-anti}
	[\Phi,\Psi]=\int_{-\infty}^x [\nt,\mt] dy \,, \quad [\Phi_0,\Psi_0]=\int_{-\infty}^x [\nt(y,0),\mt(y,0)] dy.
\end{equation}
To ensure that anti-derivatives in \eqref{def-anti} are well-defined in $L^2$, the shift curves $\X,\Y$ are set to satisfy 
\begin{equation}
	\label{cond-shift}
	\int_\R [n-n^\sha,m-m^\sha] (x,t) dx =0 \quad \text{for all~}t\geq0.
\end{equation}
The above two equations hold when they hold initially and
\begin{equation}\label{cond-shift-dt}
	\frac{d}{dt}\int_\R [n-n^\sha,m-m^\sha] (x,t) dx =0 \quad \text{for all~}t\geq0.
\end{equation}
Thanks to \eqref{eqn-pert-shock}-\eqref{eqn-error-shock}, \eqref{cond-shift-dt} is equivalent to
\begin{equation}
	\label{cond-shift-dt2}
	\int_\R [h_1,h_2] (x,t) dx =0 \quad \text{for all~}t\geq0.
\end{equation}
Noting that $\int_\R \p_x \sR dx =0$, it follows that $\X, \Y$ have to satisfy
\begin{equation}
	\label{ode-shift}
	\left\{
	\begin{aligned}
		&\X' = -s + \frac{ \int_\R \jump{m} \p_x \sigma_{\X} dx }{\int_\R \jump{n} \p_x \sigma_{\X} dx},\\
		&\Y' =-s + \frac{ \int_\R \jump{\frac{m^2}{n}+(A+1)n - \p_x\frac{m}{n}-\frac{1}{2} (\p_x \phi)^2-\p_x^2 \phi} \p_x \sigma_{\Y} dx }{ \int_\R \jump{m} \p_x \sigma_{\Y} dx }.
	\end{aligned}
	\right.
\end{equation}
While the first component of \eqref{cond-shift} at $t=0$ just requires 
\begin{align*}
	0=\int_\R (n_0 -n^s_{\X_0}-\rho_0^-(1-\sigma_{\X_0})-\rho_0^+\sigma_{\X_0}) dx,
\end{align*}
which yields that
	\begin{align}
	&	\mathscr{A}_1(\X_0) + \frac{1}{\jump{\nb}} \left\{ \int_{-\infty}^{0}(n_0-n^s(\cdot,0)-\rho_0^-) dx +\int_{0}^{\infty}(n_0-n^s(\cdot,0)-\rho_0^+) dx\right\}=0,  	
	\label{def-shift0-1} \\
	&\text{where~} \mathscr{A}_1(\X_0)=\X_0 + \frac{1}{\jump{\nb}} \left\{ -\int_{-\infty}^{0} \jump{\rho_0} \sigma_{\X_0}dx +\int_{0}^{\infty}\jump{\rho_0} (1-\sigma_{\X_0}) dx\right\}. \label{def-A1}
	\end{align} 
And similarly, 
\begin{align}
	&\mathscr{A}_2(\Y_0) + \frac{1}{\jump{\mb}} \left\{ \int_{-\infty}^{0}(m_0-m^s(\cdot,0)-w_0^-) dx +\int_{0}^{\infty}(m_0-m^s(\cdot,0)-w_0^+) dx\right\}=0,  \label{def-shift0-2} \\
	&\text{where~}\mathscr{A}_2(\Y_0)=\Y_0 + \frac{1}{\jump{\mb}} \left\{ -\int_{-\infty}^{0} \jump{w_0} \sigma_{\X_0}dx +\int_{0}^{\infty}\jump{w_0} (1-\sigma_{\X_0}) dx\right\}. \label{def-A2}
\end{align}

\begin{lemma}\label{Lem-shift}
		Assume that \eqref{RH} and \eqref{entropy} hold. Then there exists a $\gamma_0>0$ such that if the periodic perturbations $ [\rho_0^\pm, w_0^\pm]$ on are of zero average, i.e. \eqref{average0}, and 
		\begin{equation}
			\label{small-periodic}
			\nu:=\norm{[\rho_0^\pm, w_0^\pm]}_{H^3((0,\pi^\pm))}\leq \gamma_0\delta \quad\text{with~} \delta=\abs{\nb^+ - \nb^-},
		\end{equation}  
		then $\left(\X_0, \Y_0\right)$ is uniquely determined by \eqref{def-shift0-1} and \eqref{def-shift0-2}, and there exists a unique solution $ [\X, \Y](t) \in C^1[0,+\infty) $ to the system \eqref{ode-shift} with the initial data $ [\X, \Y](0)=[\X_0, \Y_0], $ and there holds
		\begin{equation*}
			\abs{\left[\X',\Y'\right](t)} \leq C\nu e^{-2\alpha t}, \qquad t\geq 0,
		\end{equation*}	
	and
			\begin{equation*}
		 \abs{\left[\X-\X_\infty,\Y-\Y_\infty\right](t)} \leq C\nu\delta^{-1} e^{-2\alpha t}\leq C\gamma_0e^{-2\alpha t}, \qquad t\geq 0,
	\end{equation*}	
		where the constant $ \alpha>0 $ is independent of $ \nu $ and $ t. $ 
		Moreover, the corresponding locations $ \X_\infty, \Y_\infty$ can be written in terms of the constants $\X_0,\Y_0$ and the periodic solutions $[n^\pm,m^\pm,\phi^\pm]$ as follows,
		\begin{align}
			\X_\infty =~& \mathscr{A}_1(\X_0) - \frac{1}{\jump{\nb}}  \jump{\frac{1}{\pi}\int_{0}^{\pi} \int_{0}^{x}\rho_0(y)dydx},
			\label{X-inf} \\
			\Y_\infty =~ & \mathscr{A}_2(\Y_0) - \frac{1}{\jump{\mb}} \jump{\frac{1}{\pi}\int_{0}^{\pi} \int_{0}^{x}w_0(y)dydx} 
			+ \frac{1}{\jump{\mb}}\int_{0}^{+\infty}  \Big\ldbrack\frac{1}{\pi}\int_{0}^{\pi} \Big(\frac{m^2}{n} \label{Y-inf} \\
			&\qquad +(A+1)n -\frac{1}{2} (\p_x \phi)^2
			- \frac{\mb^2}{\nb}- (A+1)\nb \Big) dx  \Big\rdbrack  dt.  \notag 
		\end{align}
	\end{lemma}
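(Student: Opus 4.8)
The plan is to establish the four assertions of Lemma \ref{Lem-shift} in the natural order: unique solvability of the algebraic equations \eqref{def-shift0-1}--\eqref{def-shift0-2} for the initial shifts $(\X_0,\Y_0)$; global existence and uniqueness of the shift system \eqref{ode-shift}; the exponential-in-time bounds; and finally the closed forms \eqref{X-inf}--\eqref{Y-inf}.

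First, for the initial shifts I would show that $\X_0\mapsto\mathscr{A}_1(\X_0)$ and $\Y_0\mapsto\mathscr{A}_2(\Y_0)$ from \eqref{def-A1}, \eqref{def-A2} are strictly increasing $C^1$ bijections of $\R$. Differentiating \eqref{def-A1} and using $\p_{\X_0}\sigma_{\X_0}=-\p_x\sigma_{\X_0}$ together with $\int_\R\p_x\sigma_{\X_0}\,dx=1$, the leading term of $\mathscr{A}_1'$ is $1$, while the correction is bounded by $\abs{\jump{\nb}}^{-1}\norm{\jump{\rho_0}}_{L^\infty}\le C\nu/\delta\le C\gamma_0$, using the boundedness of the periodic data from Lemma \ref{Lem-periodic} and $\abs{\jump{\nb}}=\delta$. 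For $\gamma_0$ small this gives $\mathscr{A}_1'=1+O(\gamma_0)>0$, and since the linear leading term forces $\mathscr{A}_1(\X_0)\to\pm\infty$ as $\X_0\to\pm\infty$ while the localized integrals in \eqref{def-shift0-1} are finite constants by \eqref{init-shock}, equation \eqref{def-shift0-1} has a unique root $\X_0$; likewise \eqref{def-shift0-2} has a unique root $\Y_0$.

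Next, I would put \eqref{ode-shift} into a form exhibiting its decay. Writing $\jump{n}=\jump{\nb}+\jump{\rho}$, $\jump{m}=\jump{\mb}+\jump{w}$ and invoking the Rankine--Hugoniot relations \eqref{RH}, namely $s\jump{\nb}=\jump{\mb}$ and $-s\jump{\mb}+\jump{\mb^2/\nb+(A+1)\nb}=0$, the constant (non-decaying) parts of the numerators cancel, so that after using $\int_\R\p_x\sigma_{\X}\,dx=1$ one obtains $\X'=I_n^{-1}\int_\R(\jump{w}-s\jump{\rho})\p_x\sigma_{\X}\,dx$ with $I_n=\jump{\nb}+\int_\R\jump{\rho}\,\p_x\sigma_{\X}\,dx$, and an analogous expression for $\Y'$ whose numerator carries the periodic momentum-flux jump. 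By Lemma \ref{Lem-periodic} the periodic perturbations and their flux combinations are $O(\nu e^{-2\alpha t})$ in $L^\infty$, so each numerator is $O(\nu e^{-2\alpha t})$; the smallness $\nu\le\gamma_0\delta$ makes $I_n=\jump{\nb}(1+O(\gamma_0 e^{-2\alpha t}))$ and the $\Y$-denominator $\jump{\mb}(1+O(\gamma_0 e^{-2\alpha t}))$ bounded away from $0$. The right-hand sides are therefore smooth and locally Lipschitz in $(\X,\Y)$ (as $\sigma,\p_x\sigma,\p_x^2\sigma$ are bounded and smooth), so Picard--Lindel\"of yields a local $C^1$ solution; since the numerators are $O(\nu e^{-2\alpha t})$ and the denominators are bounded below of order $\delta$, the resulting uniform exponential bound on $[\X',\Y']$ precludes finite-time blow-up and extends the solution to $[0,+\infty)$. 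Integrating from $t$ to $+\infty$ then produces the limits $\X_\infty,\Y_\infty$, the decay $\abs{[\X-\X_\infty,\Y-\Y_\infty]}\le C\nu\delta^{-1}e^{-2\alpha t}\le C\gamma_0 e^{-2\alpha t}$, and the uniform bound $\abs{[\X,\Y]}\le C$.

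Finally, and this is the delicate part, I would derive \eqref{X-inf}--\eqref{Y-inf} by integrating $\X_\infty=\X_0+\int_0^\infty\X'\,dt$ (similarly for $\Y$) and converting the resulting space-time integrals of the periodic perturbations into the primitive-means $\jump{\frac1\pi\int_0^\pi\int_0^x\rho_0\,dy\,dx}$ and $\jump{\frac1\pi\int_0^\pi\int_0^x w_0\,dy\,dx}$. The mechanism is to exploit the conservation laws of the periodic solutions, $\p_t\rho^\pm+\p_x w^\pm=0$ and $\p_t m^\pm+\p_x F^\pm=0$ with $F^\pm=\frac{(m^\pm)^2}{n^\pm}+(A+1)n^\pm-\p_x u^\pm-\frac12(\p_x\phi^\pm)^2-\p_x^2\phi^\pm$, together with the conserved zero means $\int_0^{\pi^\pm}[\rho^\pm,w^\pm]\,dx=0$ from Lemma \ref{Lem-periodic}, so as to rewrite the time integral as a primitive evaluated at $t=0$. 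For the continuity equation the period-mean of the flux $m^\pm$ equals the constant $\mb^\pm$, so no secular term survives and only the primitive-mean phase contributes, giving \eqref{X-inf}; for the momentum equation the period-mean of $F^\pm$ is \emph{not} conserved but relaxes to $\frac{(\mb^\pm)^2}{\nb^\pm}+(A+1)\nb^\pm$, and its accumulated deviation yields the extra time-integral in \eqref{Y-inf}, while the $\p_x u^\pm$ and $\p_x^2\phi^\pm$ terms drop out since their period-means vanish. The hard part throughout is that the periodic oscillations are \emph{not integrable} on $\R$, so neither $\int_\R(n^\sha-n^s_\X)\,dx$ nor a naive interchange of $\int_0^\infty$ and $\int_\R$ is directly meaningful; the resolution is to work only with the integrable weights $\p_x\sigma$ and $(1-\sigma)\sigma$ and with the periodic \emph{primitives} (which are bounded and periodic precisely because the data have zero mean), in the spirit of the $\sG$/$\sR$ bookkeeping of Lemma \ref{lem-error}, justifying every limit by the exponential decay supplied by Lemma \ref{Lem-periodic}.
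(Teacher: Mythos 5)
Your proposal is correct and, in substance, follows the same route as the paper's proof. For the initial shifts, your monotone-bijection argument is exactly the paper's estimate $\abs{\mathscr{A}_i'-1}\leq C\gamma_0$ (using $\norm{\p_x\sigma}_{L^1}=1$) followed by the implicit function theorem, and for the shift system the paper likewise relies on the Rankine--Hugoniot cancellation together with the exponential decay of Lemma \ref{Lem-periodic}, only without displaying the quotient formula for $\X'$ that you write out; your version has the added benefit of making transparent where the factor $\delta^{-1}$ in the bound $\abs{\X-\X_\infty}\leq C\nu\delta^{-1}e^{-2\alpha t}$ comes from. The only place where your framing differs is the derivation of \eqref{X-inf}--\eqref{Y-inf}: the paper obtains these by integrating the perturbation equations \subeqref{eqn-pert-shock}{1} and \subeqref{eqn-pert-shock}{2} by parts over space-time domains bounded by the shift curves, deferring the details to \cite{Xin.Y.Y2021} and \cite{Huang.Y2021}, whereas you integrate the shift ODE in time. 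These are the same computation in different clothing, since the ODE \eqref{ode-shift} is precisely the spatially integrated form of the perturbation equations, and you isolate the correct mechanism: the conservation laws of the periodic solutions make $\rho^\pm\,dx - w^\pm\,dt$ exact, so the time integrals collapse onto periodic primitives whose period-means are exactly the quantities in \eqref{X-inf}; the flux mean of the mass equation is conserved, so no secular term appears in $\X_\infty$, while the momentum flux mean is not conserved, producing the time integral in \eqref{Y-inf}, with $\p_x u^\pm$ and $\p_x^2\phi^\pm$ dropping out as exact derivatives of periodic functions. One technical caution for carrying out your plan: when computing $\X_\infty=\X_0+\int_0^\infty\X'\,dt$, do not expand the nonlinear denominator of \eqref{ode-shift} (that only yields the formula up to higher-order corrections); instead clear it, writing $\jump{\nb}\,\X'=\int_\R\big(\jump{w}-(s+\X')\jump{\rho}\big)\p_x\sigma_\X\,dx$, so that the integrand is recognized as an exact $t$-derivative of the shifted primitive and the identities \eqref{X-inf}--\eqref{Y-inf} come out exactly.
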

At the same time, it should be assumed that
\begin{equation}
	\label{coinciding position}
	\X_\infty=\Y_\infty.
\end{equation}
Otherwise, the limit of the ansatz $[n^\sha,m^\sha](x,t)$ as $t\rightarrow \infty$, i.e. $[n^s(x-st-\X_\infty),m^s(x-st-\Y_\infty)]$, 
is not a traveling solution to \eqref{NSP-Ion}, and the error term $h_2$ in \eqref{eqn-error-shock} would not vanish as $t\rightarrow \infty$.
With \eqref{X-inf} and \eqref{Y-inf}, \eqref{coinciding position} is equivalent to the following \emph{zero-mass type condition}:
\begin{equation}
	\label{zero mass}
	\begin{aligned}
		&s\Big\{
		\int_{-\infty}^{0}(n_0-n^s(\cdot,0)-\rho_0^-) dx +\int_{0}^{\infty}(n_0-n^s(\cdot,0)-\rho_0^+) dx
		\\
		&\quad +  \jump{\frac{1}{\pi}\int_{0}^{\pi} \int_{0}^{x}\rho_0(y)dydx} \Big\} 
		\\
		=& \int_{-\infty}^{0}(m_0-m^s(\cdot,0)-w_0^-) dx +\int_{0}^{\infty}(m_0-m^s(\cdot,0)-w_0^+) dx 
		\\
		&\quad +  \jump{\frac{1}{\pi}\int_{0}^{\pi} \int_{0}^{x}w_0(y)dydx} 
		-\int_{0}^{+\infty}  \Big\ldbrack\frac{1}{\pi}\int_{0}^{\pi} \Big(\frac{m^2}{n} +(A+1)n
		\\
		&\qquad  -\frac{1}{2} (\p_x \phi)^2
		- \frac{\mb^2}{\nb}- (A+1)\nb \Big) dx  \Big\rdbrack  dt.
	\end{aligned}
\end{equation}
\begin{proof}[Proof of Lemma \ref{Lem-shift}]
	Since $\sigma'>0$, definitions of $\mathscr{A}_1$ and $\mathscr{A}_2$, \eqref{def-A1} and \eqref{def-A2}, directly imply that
	\begin{align*}
		&\abs{\mathscr{A}_1'(\X_0)-1} \leq \frac{ \norm{\rho_0^\pm}_{L^\infty(\R)} }{\jump{\nb}} \norm{\sigma'}_{L^1(\R)} \leq \frac{ \norm{\rho_0^\pm}_{H^1(\R)} }{\jump{\nb}} \leq \gamma_0,\\
		&\abs{\mathscr{A}_2'(\Y_0)-1} \leq \frac{ \norm{w_0^\pm}_{H^1(\R)} }{\jump{\nb}} \leq \gamma_0.
	\end{align*}
	Hence, there exists $\gamma_0>0$ such that if \eqref{small-periodic} holds, then it follows from the Implicit Function Theorem that $\left(\X_0, \Y_0\right)$ is uniquely determined by \eqref{def-shift0-1} and \eqref{def-shift0-2} provided $\nu_0$ is small enough.
	
	Moreover, when $ \nu=\norm{[\rho_0^\pm, w_0^\pm] }_{H^3((0,\pi^\pm))} $ is small, thanks to Lemma \ref{Lem-periodic}, the existence, uniqueness and regularities of $\X,\Y$ can be easily derived from the ODEs, \eqref{ode-shift}. Besides, the exponential decay rates of $ \X' $ and $ \Y' $ can follow from Lemmas \ref{RH}, \ref{decay-per} directly.
	
	\eqref{X-inf} or \eqref{Y-inf} can be obtained by the method of integrating \subeqref{eqn-pert-shock}{1} or \subeqref{eqn-pert-shock}{2} by parts over an special domain depending on $\X$ or $\Y$, respectively, similarly as the periodic perturbations of viscous shock profiles for the scalar conservation law and the isentropic Navier-Stokes equations. We refer the readers to (\cite{Xin.Y.Y2021}) and (\cite{Huang.Y2021}) for details.
\end{proof}

\subsubsection{Main result for the shock profile}
With the ansatzes defined above, the theorem for the stability of the shock profile under periodic perturbations can be rephrased as follows.
\begin{theorem}\label{thm-shock}
	Assume that \eqref{RH} and \eqref{entropy} hold.
	Then there exist $ \delta_0>0$, $\gamma_0>0$ and $ \varepsilon_0>0$ such that if the strength of the shock profile satisfy \eqref{small-strength}, 
    the periodic perturbations $ \left(\rho_0^\pm, w_0^\pm \right)$ on $(0,\pi^\pm)$ are of zero average and small, i.e. \eqref{average0} and \eqref{small-periodic},  
    and the initial data satisfies \eqref{init-shock}, zero-mass type condition \eqref{zero mass}, and 
	$$  \norm{[\Phi_0,\Psi_0]}_{H^2} + \nu < \varepsilon_0, $$ 
	where $\Phi_0,\Psi_0$ are defined in \eqref{def-anti}, 
	then the problem \eqref{NSP-Ion} and \eqref{initial} admits a unique global solution $ [n,m,\phi] $ satisfying
	\begin{align*}
		n-n^\sha &\in C\left(0,+\infty;  H^1(\R) \right) \cap L^2\left(0,+\infty; H^1(\R) \right),\\
		m-m^\sha &\in C\left(0,+\infty;  H^1(\R) \right) \cap L^2\left(0,+\infty; H^2(\R) \right),\\
		 \phi-\phi^\sha & \in C\left(0,+\infty;  H^2(\R) \right) \cap L^2\left(0,+\infty; H^2(\R) \right).
	\end{align*}
	Moreover, it holds that
	\begin{equation}\label{asym-shock}
		\norm{[n,m,\phi](\cdot,t) - [n^s,m^s,\phi^s](\cdot-st-\X_\infty) }_{L^\infty(\R)} \rightarrow 0 \quad \text{ as } t\rightarrow +\infty,
	\end{equation}
    where $\X_\infty(=\Y_\infty)$ is the constant given by \eqref{X-inf}.
\end{theorem}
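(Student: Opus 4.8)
The plan is to establish the global existence and the limit \eqref{asym-shock} by the anti-derivative method, combined with a continuation argument and the energy method in the reformulated variables. First I would integrate the first two equations of \eqref{eqn-pert-shock} once in $x$. By the zero-mass condition \eqref{cond-shift}, the anti-derivatives $[\Phi,\Psi]$ in \eqref{def-anti} are well-defined and decay at both infinities, and, with $\Phi_x=\nt$ and $\Psi_x=\mt$, they satisfy
\begin{equation*}
\partial_t\Phi+\Psi_x=H_1,\qquad
\partial_t\Psi+\Big(\frac{m^2}{n}-\frac{(m^\sha)^2}{n^\sha}\Big)+(A+1)\Phi_x-\partial_x\ut-\tfrac12\big[(\partial_x\phi)^2-(\partial_x\phi^\sha)^2\big]-\partial_x^2\phit=H_2,
\end{equation*}
where $H_i:=\int_{-\infty}^{x}h_i\,dy$. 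The potential $\phit$ is not an independent unknown: linearizing the third equation of \eqref{eqn-pert-shock} gives the uniformly elliptic relation $\partial_x^2\phit-e^{-\phi^\sha}\phit=\nt+O(\phit^2)+\sR$ with $e^{-\phi^\sha}\ge c>0$, so elliptic regularity slaves $\phit$ to $\nt=\Phi_x$, namely $\norm{\phit}_{H^{k+2}}\le C(\norm{\nt}_{H^k}+\text{h.o.t.}+\norm{\sR})$. This reduces the whole problem to controlling $[\Phi,\Psi]$ in $H^2$.

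Next I would run a continuation argument: assuming a priori that $N(T):=\sup_{0\le t\le T}\norm{[\Phi,\Psi](t)}_{H^2}$ is small, derive a closed energy inequality, and glue it to local well-posedness to obtain the global solution. The heart is the energy estimate. At the anti-derivative level I would test the system with suitably weighted multipliers; the viscous term $-\partial_x\ut$ (with $\ut$ controlled by $\Psi_x$ and $\Phi_x$) supplies the parabolic dissipation of $\mt=\Psi_x$, while the compressive, monotone structure of the profile in \eqref{eqn-shock}, together with the Lax condition \eqref{entropy} and the small strength \eqref{small-strength}, produces a positive-definite quadratic form weighted by $\abs{\partial_x\sigma_{\X}}$, supplying the shock dissipation; the density perturbation $\nt=\Phi_x$ is then recovered through the $\Phi$--$\Psi$ coupling via the effective pressure $(A+1)\Phi_x$ and the higher-order estimates. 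The self-consistent field contributions $-\partial_x^2\phit$ and $-\tfrac12[(\partial_x\phi)^2-(\partial_x\phi^\sha)^2]$ are reabsorbed by the elliptic slaving of $\phit$ by $\nt$, so that the electric field does not destroy the dissipative structure. The first- and second-order estimates follow by differentiating \eqref{eqn-pert-shock} in $x$ and repeating, using the a priori smallness and the bounds in Lemma \ref{lem-shock-prop} for the profile derivatives to dominate the commutator and nonlinear terms.

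The error terms are controlled by the structure prepared in Section 2. By Lemma \ref{lem-error}, the good-decaying parts of $h_1,h_2,h_3$ are of the form $\partial_x\sR$ or $\sR$ with $\sR=\sR_{\rm Ti}\sR_{\rm Sp}$, hence $\norm{\sR}_{L^p}\le C\nu e^{-\alpha t}$, which is integrable in time; the non-decaying brackets in \eqref{eqn-error-shock} are annihilated by the choice of the shifts \eqref{ode-shift}, and the resulting anti-derivative errors $H_1,H_2$ are controlled (via the anti-derivative error estimate, Lemma \ref{h-estimates}) by $C\nu e^{-\alpha t}$ as well. Collecting everything yields $\norm{[\Phi,\Psi](t)}_{H^2}^2+\int_0^t(\text{dissipation})\,ds\le C\norm{[\Phi_0,\Psi_0]}_{H^2}^2+C\nu$, which closes the bootstrap under the smallness hypothesis $\norm{[\Phi_0,\Psi_0]}_{H^2}+\nu<\varepsilon_0$, giving global existence and $\int_0^{+\infty}\norm{\partial_x[\nt,\mt]}_{H^1}^2\,dt<\infty$.

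Finally, for \eqref{asym-shock} I would combine the uniform bound, the finiteness of the dissipation integral, and the time-integrability of $\tfrac{d}{dt}\norm{\partial_x[\nt,\mt]}^2$ to conclude $\norm{[\nt,\mt]}_{H^1}\to0$, hence $\norm{[\nt,\mt]}_{L^\infty}\to0$ by Sobolev embedding, and then $\norm{\phit}_{L^\infty}\to0$ by the elliptic slaving. Writing $n-n^s(\cdot-st-\X_\infty)=\nt+(n^s_{\X}-n^s_{\X_\infty})+\rho^-(1-\sigma_{\X})+\rho^+\sigma_{\X}$ and similarly for $m$ and $\phi$, the profile-shift term vanishes because $\X\to\X_\infty$ by Lemma \ref{Lem-shift}, the periodic parts vanish by the exponential decay of Lemma \ref{Lem-periodic}, and the imposed coincidence $\X_\infty=\Y_\infty$ in \eqref{coinciding position} guarantees the limiting ansatz is the genuine traveling profile $[n^s,m^s,\phi^s](\cdot-st-\X_\infty)$. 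The main obstacle I anticipate is precisely the anti-derivative energy estimate: extracting a sign-definite shock dissipation while simultaneously reabsorbing the self-consistent electric-field terms ($\partial_x^2\phit$ and $(\partial_x\phi)^2$) and the $\Phi$--$\Psi$ coupling through $(A+1)\Phi_x$; here the smallness of $\delta$ and the careful use of the elliptic estimate for $\phit$ are essential, and this is where the Poisson coupling genuinely complicates the classical Navier--Stokes argument.
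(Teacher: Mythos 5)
Your overall skeleton coincides with the paper's proof: the anti-derivatives $[\Phi,\Psi]$ made admissible by \eqref{cond-shift}, a continuation argument built on $H^2$ \textit{a priori} estimates, the error machinery of Lemmas \ref{lem-error} and \ref{h-estimates} together with the shift ODEs \eqref{ode-shift}, and the final asymptotics via Lemmas \ref{Lem-shift}, \ref{Lem-periodic} and the coincidence \eqref{coinciding position}. The genuine gap is in the one step you yourself flag as the main obstacle: the Poisson coupling in the \emph{zeroth-order} anti-derivative estimate. Elliptic slaving gives $\norm{\phit}_{H^1}\lesssim\norm{\nt}=\norm{\Phi_x}$, but with the multipliers $\Phi$ and $\Psi/L$ the only dissipation available at zeroth order is $\norm{\abs{\p_x\usinf}^{1/2}\Psi}^2+\norm{\Psi_x}^2$; there is no dissipation of $\norm{\Phi_x}$ at this level. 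After integration by parts the field term $-(\p_x^2\phit,\Psi/L)$ produces $(\phit_x,\Psi_x/L)$, which slaving only bounds by $\norm{\Phi_x}\,\norm{\Psi_x}$. Absorbing the $\norm{\Psi_x}^2$ part into the zeroth-order dissipation, whose coefficient is the fixed constant of order $1/((A+1)\nb^-)$, forces through Young's inequality a coefficient of order $(A+1)\nb^-$ in front of $\int\norm{\Phi_x}^2$, and this must in turn be beaten by the first-order dissipation constant for $\norm{\Phi_x}^2$, which is another fixed constant of order $A$ coming from $L-1=A-\abs{\usinf}^2$. The resulting requirement is an inequality between the fixed physical constants $A,\nb^\pm$, not something achievable by shrinking $\delta$, $\nu$, $\varepsilon$; so the bootstrap as you describe it does not close in general.

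The paper's resolution is structural rather than perturbative: $-(\p_x^2\phit,\Psi/L)$ is not estimated at all, but converted into exact derivatives. Using \subeqref{eqn-anti-shock}{1} to replace $\Psi_x$ by $-\Phi_t+H_1$, and then the Poisson equation \subeqref{eqn-anti-shock}{3} to replace $\Phi_x=\nt$, this term becomes $\frac{1}{2}\frac{d}{dt}\left(\phit_x,\frac{\phit_x}{L}\right)+\frac{1}{2}\frac{d}{dt}\left(\phit,\frac{e^{-\phisinf}\phit}{L}\right)-\frac{d}{dt}\left(\phit_x,\frac{\Phi}{L}\right)+\frac{d}{dt}\left(\phit,\frac{L_x}{L^2}\Phi\right)$ plus remainders that carry genuinely small factors ($\delta$, $\varepsilon$, or $\nu e^{-\alpha t}$). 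The electric field thus enters the energy \emph{functional} itself, and the quadratic form $(\Phi,\Phi)+\left(\phit_x,\frac{\phit_x}{L}\right)-2\left(\phit_x,\frac{\Phi}{L}\right)$ is positive definite precisely because $L=A+1-\abs{\usinf}^2>1$ (the discriminant observation in Lemma \ref{lem-0th-shock}). This sign structure, not smallness, is what renders the Poisson coupling harmless, and it is the ingredient missing from your proposal; your remaining steps (first- and second-order estimates, the asymptotics via time-integrability of the dissipation and of its time derivative, and the final decomposition of $[n,m,\phi]-[n^s,m^s,\phi^s](\cdot-st-\X_\infty)$) do match the paper's argument.
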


\begin{remark}
	When the periodic perturbations vanish, the zero-mass type condition \eqref{zero mass} reduces to 
		\begin{align*}
			s\Big\{
			\int_\R(n_0-n^s(\cdot,0)) dx \Big\}
			= \int_\R(m_0-m^s(\cdot,0)) dx,
		\end{align*}
	and the equations of initial shifts $\X_0, \Y_0$ reduce to 
	\begin{align*}
		\X_0= -\frac{1}{\jump{\nb}} \int_\R(n_0-n^s(\cdot,0)) dx, \quad \Y_0= -\frac{1}{\jump{\mb}} \int_\R(m_0-m^s(\cdot,0)) dx.
	\end{align*}
	They are equivalent to that there exists an initial shift $\X_0$ such that 
	\begin{align*}
			\int_\R [n_0(x)-n^s(x-\X_0),m_0(x)-m^s(x-\X_0)] dx =0,
	\end{align*}
    which is exactly the zero-mass condition imposed in \cite{duan2020a}.
\end{remark}

\subsection{Ansatz and main result for rarefaction waves}
Without confusions, we still use the notations $\sigma,n^\sha, u^\sha$, etc., when studying the problem of the stability of rarefaction waves, but they are \emph{different} from the ones in the problem of the stability of shock profiles.
%\sout{To study the stability of rarefaction wave under a periodic perturbation, we impose the initial data for \eqref{NSP-Ion} as}
%\begin{equation}
%	\label{init-rare}
%	\xout{[n_0-n^r(\cdot,0) -\rho_0^\pm, m_0-m^r(\cdot,0)-w_0^\pm] \in L^1(\R^\pm).}
%\end{equation} 
%\sout{where $[\rho_0^\pm,w_0^\pm]$ are periodic functions with zero average \eqref{average0}}. 
Recall that $[n^r,u^r,\phi^r]$ are approximate rarefaction waves defined by \eqref{rare-sol} and \eqref{wr}.

Since we can decompose $n^r$, $u^r$ into 
\begin{equation}
	n^r=\nb^-(1-\sigma)+\nb^+\sigma,\quad u^r=\ub^-(1-\eta)+\ub^+\eta,
\end{equation}
with
\begin{equation}
	\sigma=\dfrac{n^r-\nb^-}{\nb^+-\nb^-},\quad\eta=\dfrac{u^r-\ub^-}{\ub^+-\ub^-},
\end{equation} 
it is nature to define the ansatz as
\begin{equation}
	\label{def-ansatz-rare}
	\begin{aligned}
		n^\sha&:=n^-(1-\sigma)+n^+\sigma=n^r+\rho^-(1-\sigma)+\rho^+\sigma,\\ u^\sha&:=u^-(1-\eta)+u^+\eta=u^r+v^-(1-\eta)+v^+\eta,\\
		\phi^\sha&:=\phi^r+\varphi^-(1-\sigma)+\varphi^+\sigma,
	\end{aligned}
\end{equation}
where $[\rho^\pm,w^\pm,v^\pm,\varphi^\pm]$ are defined in \eqref{def-pert}.
It should be remarked that $\phi^r=-\ln n^r$, and  $\phi^r=\phi^-(1-\sigma)+\phi^+\sigma$ does not hold. %By tedious but not difficult calculations, we can obtain the following equations of $(n^\sha,u^\sha,\phi^\sha)$
%\begin{equation}
%	\label{eqn-ansatz-rare}
%	\begin{cases}
%		\p_tn^\sha+\p_x(n^\sha u^\sha)=f_1+f_2+f_3\\
%		\p_tu^\sha+u^\sha\p_xu^\sha+A\dfrac{\p_x n^\sha}{n^\sha}-\p_x\phi^\sha=\dfrac{\p_x^2u^\sha}{n^\sha}-g\\
%		\p_x^2\phi^\sha=n^\sha-e^{-\phi^\sha}+\p_x^2\phi^r+h_1+h_2
%	\end{cases}
%\end{equation}
%where 
%\begin{align*}
%	&f_1=(\rho^+-\rho^-)\p_t\sigma+(\rho^+-\rho^-)u^\sha\p_x\sigma+n^\sha(v^+-v^-)\p_x\eta, \\
%	&f_2=\p_x n^r(u^\sha-u^r)+(n^\sha-n^r)\p_xu^r,\\
%	&f_3=(\p_x(n^-u^+)-\p_x(n^-u^-))(1-\sigma)\eta+(\p_x(n^+u^-)-\p_x(n^+u^+))\sigma(1-\eta)\\
%	&h_1=2\p_x(\varphi^+-\varphi^-)\p_x\sigma+(\varphi^+-\varphi^-)\p_x^2\sigma\\
%	&h_2=\{b(\phi^\sha,\phi^r,\phi^-,\phib^-)\varphi^-(1-\sigma)+b(\phi^\sha,\phi^r,\phi^+,\phib^+)\varphi^+\sigma\\
%	&\qquad+a(\phi^\sha,\phi^r)(\varphi^+ +\varphi^-)-(e^{-\phi^-}-e^{-\phib^-}+e^{-\phi^+}-e^{-\phib^+})\}\sigma(1-\sigma)
%\end{align*}
%Here $$a(\phi^\sha,\phi^r)=\frac{e^{-\phi^\sha}-e^{-\phi^r}}{\phi^\sha-\phi^r}=\int_{0}^{1}-e^{-(\phi^r+\theta(\phi^\sha-\phi^r))}d\theta$$ and $$b(\phi^\sha,\phi^r,\phi^-,\phib^-)=-\int_{0}^{1}(\nb^+-\nb^-)e^{-(\phi^\sha-\phi^r)\theta}d\theta+\int_{0}^{1}\int_{0}^{1}e^{-(\varphi^+-\varphi^-)\sigma\theta_1\theta_2}  \nb^-e^{-\varphi^-\theta_1} (\varphi^+-\varphi^-)\theta_1d\theta_2\theta_1$$\todo{may delete this part}

Define the perturbation $(\nt,\ut,\phit)=(n-n^\sha,u-u^\sha,\phi-\phi^\sha)$. With the aid of Lemma  \ref{lem-error}, $(\nt,\ut,\phit)$ satisfies
\begin{equation}
	\label{eqn-pert-rare}
	\begin{cases}
		\p_t\nt+\p_x(nu)-\p_x(n^\sha u^\sha)=k_1,\\
		\p_t\ut+u\p_xu-u^\sha\p_xu^\sha+A\left(\dfrac{\p_xn}{n}-\dfrac{\p_xn^\sha}{n^\sha}\right)-\p_x\phit
		=\dfrac{\p_x^2 u}{n}-\dfrac{\p_x^2u^\sha}{n^\sha}+\dfrac{\p_x^2u^r}{n^r}+k_2,\\
		\p_x^2\phit=\nt+e^{-\phi^\sha}(1-e^{-\phit})-\p_x^2\phi^r+k_3,
	\end{cases}
\end{equation} 
with the initial data
\begin{equation}
	\label{init-rare2}
	[\nt,\ut](x,0)=[\nt_0,\ut_0](x)=[n_0(x)-n^\sha(x,0), u_0(x)-u^\sha(x,0)], \quad \mt_0=\nt\ut,
\end{equation}
and the errors terms $k_i$ satisfy 
\begin{equation}
	\label{eqn-error-rare}
	k_i=\sR \quad \text{for}~i=1,2,3.
\end{equation}

The details to derive \eqref{eqn-pert-rare} and \eqref{eqn-error-rare} are similar to the derivation of \eqref{eqn-pert-shock}-\eqref{eqn-error-shock} given in the appendix Section \ref{subsect-error}. We omit here for simplicity. Moreover, thanks to Lemma \ref{lem-error}, good-decaying terms in $k_1$, $k_2$, and $k_3$ are quadratic in the form $\sR=\sR_{\rm Ti} \sR_{\rm Sp}$. For the stability of rarefaction waves, $\sR_{\rm Ti}$ are the periodic functions and their derivatives, and $\sR_{\rm Sp}$ are the derivatives of the background rarefaction waves. In particular, such forms are kept after derivations for several times due to Lemma \ref{Lem-periodic}, \ref{lem-rare-prop}. 

With the ansatz defined above, the theorem for the long-time behaviors of the rarefaction waves under periodic perturbations can be rephrased as follows.

\begin{theorem}\label{thm-rare}
	Assume that $[\nb^+,\ub^+] \in R_2(\nb^-,\ub^-)$.
	Then there exist $\nu_0>0$ and $ \varepsilon_0>0$ such that if
	the periodic perturbations $ \left(\rho_0^\pm, w_0^\pm \right)$ on $(0,\pi^\pm)$ are of zero average, {\rm i.e.}, \eqref{average0} and 
	\begin{equation}
		\label{small-periodic-rare}
		\nu:=\norm{[\rho_0^\pm, w_0^\pm]}_{H^3((0,\pi^\pm))}\leq \nu_0,
	\end{equation}
	and the initial data satisfies 
	$$  \norm{[\nt_0,\mt_0]}_{H^1} +
	\epsilon < \varepsilon_0, $$ 
	where $\nt_0,\mt_0$ are defined in \eqref{init-rare2}, $\epsilon>0$ is the parameter in \eqref{wr}, then the problem \eqref{NSP-Ion} and \eqref{initial} admits a unique global solution $ [n,m,\phi] $ satisfying
	\begin{align*}
		n-n^\sha &\in C\left(0,+\infty;  H^1(\R) \right) \cap L^2\left(0,+\infty; H^1(\R) \right), \\
		m-m^\sha &\in C\left(0,+\infty;  H^1(\R) \right) \cap L^2\left(0,+\infty; H^2(\R) \right), \\
		\phi-\phi^\sha & \in C\left(0,+\infty;  H^2(\R) \right) \cap L^2\left(0,+\infty; H^2(\R) \right).
	\end{align*}
	Moreover, it holds that
	\begin{equation}\label{asym-rare}
		\norm{[n,m,\phi](\cdot,t) - [n^R,m^R,\phi^R](\cdot/t) }_{L^\infty(\R)} \rightarrow 0 \quad \text{ as } t\rightarrow +\infty.
	\end{equation}
\end{theorem}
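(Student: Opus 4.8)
The plan is to run the standard continuation scheme adapted to the non-localized setting: local well-posedness in the perturbation variables, uniform a priori estimates under a smallness hypothesis, patching to a global solution, and extraction of the large-time behavior from the accumulated dissipation. Working with the perturbation $[\nt,\ut,\phit]=[n-n^\sha,u-u^\sha,\phi-\phi^\sha]$ governed by \eqref{eqn-pert-rare}, I would posit the a priori assumption
\begin{equation*}
	N(T):=\sup_{0\leq t\leq T}\left(\norm{\nt}_{H^1}+\norm{\ut}_{H^1}+\norm{\phit}_{H^2}\right)\leq \chi
\end{equation*}
for a small constant $\chi$, and aim to bound $N(T)$ by $\norm{[\nt_0,\mt_0]}_{H^1}+\nu+\epsilon$, which then closes the bootstrap. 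Because the error terms satisfy $k_i=\sR$ with \eqref{decay-TiSp}, i.e. $\norm{k_i}_{L^p}\leq C\nu e^{-\alpha t}$, they are exponentially small in time and harmless after integration; the genuine work lies entirely in the interaction terms carrying the background rarefaction wave. The momentum regularity asserted in the theorem follows afterwards from $m=nu$ and the $H^1$ bounds on $[\nt,\ut]$.

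The first ingredient is an elliptic estimate for the potential from \subeqref{eqn-pert-rare}{3}. Writing it as $\p_x^2\phit-b\,\phit=\nt-\p_x^2\phi^r+k_3$ with $b:=e^{-\phi^\sha}\tfrac{1-e^{-\phit}}{\phit}$, I would use that $e^{-\phi^\sha}$ is bounded below and $\tfrac{1-e^{-\phit}}{\phit}\to1$ as $\phit\to0$, so $b\geq c_0>0$ for small $\phit$; testing against $\phit$ and against $\p_x^2\phit$ then yields, by standard elliptic regularity,
\begin{equation*}
	\norm{\phit}_{H^2}\leq C\left(\norm{\nt}+\norm{\p_x^2\phi^r}+\norm{k_3}\right).
\end{equation*}
Thus $\phit$ is slaved to $\nt$ up to the time-decaying term $\p_x^2\phi^r$ (controlled by Lemma \ref{lem-rare-prop}(ii)) and the exponentially small $k_3$. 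This is precisely the point of Remark \ref{rem-ansatz}: choosing $\phi^\sha=\phi^r+\varphi^-(1-\sigma)+\varphi^+\sigma$ rather than $-\ln n^\sha$ keeps the coefficient $b$ nondegenerate and the source $\nt$ localized, which is what makes the elliptic slaving usable.

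For the energy estimates I would work in Eulerian coordinates with the natural functional
\begin{equation*}
	\mathcal{E}(t)=\int\left(\tfrac12 n|\ut|^2+(A+1)\,\Pi(n,n^\sha)+\tfrac12|\p_x\phit|^2\right)dx,
\end{equation*}
where $\Pi$ denotes the convex pressure potential, testing \subeqref{eqn-pert-rare}{1} against the pressure-type weight, \subeqref{eqn-pert-rare}{2} against $n\ut$, and \subeqref{eqn-pert-rare}{3} against $\phit$. The Poisson test contributes $-\norm{\p_x\phit}^2$ together with the nonnegative term $\int e^{-\phi^\sha}(1-e^{-\phit})\phit\,dx\geq0$, turning the self-consistent field into a favorable rather than uncontrolled contribution; the viscosity $\p_x^2u/n$ supplies the dissipation $\norm{\p_x\ut}^2$. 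Crucially, all remaining interaction terms are proportional to $\p_x[n^r,u^r,\phi^r]$: the leading one produces, after integration by parts, a damping contribution with a favorable sign from the expansion $\p_xu^r>0$, and the rest are absorbed into this term and into the viscous dissipation using the smallness of the wave strength $\delta_r$ and of $\epsilon$, via the uniform bound $\norm{\p_x[n^r,u^r]}_{L^\infty}\leq C\min\{\delta_r\epsilon,t^{-1}\}$; the inhomogeneity $\p_x^2u^r/n^r$ is square-integrable in time by the same lemma. Differentiating once in $x$ and repeating gives the first-order estimate, and combining them yields the uniform bound $\mathcal{E}(t)+\int_0^t(\norm{\p_x\nt}^2+\norm{\p_x\ut}^2)\,ds\leq C(\norm{[\nt_0,\mt_0]}_{H^1}+\nu+\epsilon)$, whence global existence and the stated regularity classes.

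The large-time behavior then follows by a now-standard argument: the uniform $H^1$ bounds together with $\int_0^\infty(\norm{\p_x\nt}^2+\norm{\p_x\ut}^2)\,dt<\infty$ force $\norm{[\nt,\ut]}_{L^\infty}\to0$ by interpolation, and the elliptic slaving gives $\norm{\phit}_{L^\infty}\to0$. Since the periodic parts $[\rho^\pm,w^\pm,\varphi^\pm]$ decay exponentially by Lemma \ref{Lem-periodic}, the ansatz $[n^\sha,u^\sha,\phi^\sha]$ converges uniformly to $[n^r,u^r,\phi^r]$, which in turn converges to the exact rarefaction $[n^R,u^R,\phi^R]$ by Lemma \ref{lem-rare-prop}(iii); assembling these three limits yields \eqref{asym-rare}. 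I expect the main obstacle to be the simultaneous control of the electric field and the slowly ($t^{-1}$) decaying rarefaction interaction terms in the energy estimate: the test functions must be arranged so the Poisson coupling contributes a nonnegative term, and so that the bad interaction terms are absorbed by the favorable rarefaction damping and the viscous dissipation rather than Gronwall-iterated, while every error is kept genuinely integrable by always pairing the spatially non-localized factor $\sR_{\rm Sp}$ with the exponentially decaying $\sR_{\rm Ti}$.
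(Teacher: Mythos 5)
Your overall strategy coincides with the paper's: a bootstrap on $\sup_t\norm{[\nt,\ut,\phit]}_{H^1}$, a relative-entropy zero-order estimate (the paper's $\Lambda(n,n^\sha)$ is your pressure potential $\Pi$), conversion of the electric coupling $-(\phit_x,n\ut)$ into the time derivative of $\tfrac12\norm{\phit_x}^2+(e^{-\phi^\sha},1-e^{-\phit}-\phit e^{-\phit})$ via the continuity and Poisson equations, derivative estimates, elliptic slaving of $\phit$ to $\nt$, and extraction of \eqref{asym-rare} from the dissipation integrals together with Lemmas \ref{Lem-periodic} and \ref{lem-rare-prop}(iii). However, there is one genuine gap, and it sits exactly at the point the paper works hardest: your claim that the inhomogeneity $\p_x^2u^r/n^r$ is harmless because it is ``square-integrable in time.'' This term enters the zero-order estimate paired with $\ut$ itself, namely $\left(\frac{\p_x^2u^r}{n^r},n\ut\right)$, and the zero-order dissipation controls only $\norm{\ut_x}^2$ and $\norm{\abs{u^r_x}^{1/2}\ut}^2$ --- there is no time-integrable control of $\norm{\ut}^2$. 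Consequently an $L^2_x$--$L^2_x$ pairing gives $\int_0^T\norm{\p_x^2u^r}\,\norm{\ut}\,dt\lesssim \sup_t\norm{\ut}\int_0^T\norm{\p_x^2u^r}\,dt$, and by Lemma \ref{lem-rare-prop}(ii) the integrand only decays like $\epsilon^{1/2}(1+t)^{-1}$, so this bound diverges logarithmically in $T$; Young's inequality fares no better, since it reintroduces the uncontrolled $\int_0^T\norm{\ut}^2dt$. Square-integrability in time of $\norm{\p_x^2u^r}$ is simply not enough to close the estimate.

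The missing idea is the one behind the paper's term $\textbf{I}_2$ in Lemma \ref{lem-0th-rare}: pair $\p_x^2u^r$ in $L^{1+\theta}$ against $\ut$ in $L^{(1+\theta)/\theta}$, interpolate $\norm{\ut}_{L^{(1+\theta)/\theta}}\lesssim\norm{\ut}_{L^\infty}^{(1-\theta)/(1+\theta)}\norm{\ut}^{2\theta/(1+\theta)}$ and $\norm{\ut}_{L^\infty}\lesssim\norm{\ut}^{1/2}\norm{\ut_x}^{1/2}$, so that part of the term is absorbed by the viscous dissipation $\norm{\ut_x}^2$ while the remainder acquires time weights $(1+t)^{-\beta}$ with $\beta>1$, which are integrable and allow the $\norm{\ut}^2$ factors to be absorbed into $\sup_t\norm{\ut}^2$. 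This is precisely why the final a priori bound \eqref{eqn-aprioi-rare} carries the factor $\epsilon^{\theta/(1+\theta)}$ rather than $\epsilon$. Without this (or the equivalent classical $L^1$--$L^\infty$ duality trick of Matsumura--Nishihara type), your zero-order estimate does not close uniformly in $T$, and the bootstrap collapses. A secondary, more structural point: your functional contains $\tfrac12\norm{\phit_x}^2$ but the zero-order computation produces no dissipation for $\phit_x$; as in the paper, all zero-order terms involving $\norm{\phit_x}^2$ must carry small constants $(\epsilon+\nu+\eta)$ so that they can be absorbed by the $\norm{[\phit_x,\p_x^2\phit]}^2$ dissipation that only appears in the first-order estimate (Lemma \ref{lem-1st-rare}), where the cross terms $\left(\p_x^2\ut,\nt_x/n\right)$ and $(\nt_x,\phit_x)$ are cancelled by testing the momentum equation against $\nt_x$; your write-up should make this coupling explicit.
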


%%%%%%%%%%%%%%%%%%%%%%%%%%%%%%%%%%%%%%%%%%%%%%%%%%%%%
% shock profiles
%%%%%%%%%%%%%%%%%%%%%%%%%%%%%%%%%%%%%%%%%%%%%%%%%%%%%
\section{Stability of shock profiles}

The proof Theorem \ref{thm-shock} is based on the \textit{a priori} estimates on the solutions to the equations of anti-derivatives $[\Phi,\Psi]$ and $\phit$, since the local existence of $[\Phi,\Psi,\phit]$ follows from the standard iteration method, which is omitted here for simplicity.

First, we rewrite the perturbation equations in terms of anti-derivatives $[\Phi,\Psi]$ and $\phit$.
\subsection{Equations}
In view of \eqref{eqn-pert-shock} and \eqref{def-anti}, we have the following perturbation equations in terms of  the anti-derivatives $[\Phi,\Psi]$ and $\phit$:
\begin{equation}
	\label{eqn-anti-shock}
	\begin{cases}
	\p_t \Phi+\p_x \Psi=H_1,  \\
	\p_t\Psi+ 2 \usinf\p_x \Psi + (A+1-\abs{\usinf}^2 ) \p_x \Phi- \p_x  \left(\dfrac{1}{\nsinf}(\p_x \Psi- \usinf\p_x \Phi) \right)\\
	\qquad
	-\left(\p_x\phi^s_{\infty}\p_x\phit+\p_x^2\phit \right)=H_2+q_1+q_2+\p_x (q_3+q_4),\\
	\p_x^2\phit-\nt - e^{-\phi^s_{\infty}}\phit=h_3+q_5+q_6,
	\end{cases}
\end{equation} 	
where
$$H_i(t,x)=\int_{-\infty}^x h_i(t,y) \,d y , \quad \text{for~}i=1,2$$
\begin{align*}
	 q_1&=-2\left(u^{\sha}-\usinf\right) \mt+\left((u^{\sha})^2-(\usinf)^2 \right) \nt 
	+\left(\p_x \phi^{\sha}-\p_x \phi^s_{\infty}\right) \p_x \phit ,\\
	q_2&=-\left(\frac{m^2}{n}-\frac{\left(m^\sha\right)^2}{n^\sha}-2 u^{\sha} \tilde{m}	+\left(u^{\sha}\right)^2 \tilde{n}\right)
	+\left(\frac{1}{2}\left(\p_x \phi\right)^2-\frac{1}{2}\left(\p_x \phi^\sha\right)^2- \p_x \phi^\sha \p_x\phit \right) ,\\
	q_3& =\frac{\tilde{m}}{n^{\sha}}-\frac{\tilde{m}}{\nsinf}-\frac{u^\sha}{n^\sha} \tilde{n}+\frac{u^s_\infty}{n^s_\infty} \tilde{n}
	,\qquad\qquad
    q_4	 =\frac{m}{n}-\frac{m^{\sha}}{n^{\sha}}-\dfrac{\mt}{n^{\sha}}+\dfrac{u^{\sha}}{n^\sha} \tilde{n}, \\
	q_5& =\left(e^{-\phi^\sha}-e^{-\phi^s_{\infty}}\right) \phit , \qquad\qquad\qquad\quad
	q_6 =-\left(e^{-\phi}-e^{-\phi^{\sha}}+e^{-\phi^{\sha}} \phit\right)  .
\end{align*}
The initial data are 
\begin{equation}
	\label{init-anti}
	[\Phi, \Psi](x,t)=[\Phi_0, \Psi_0].
\end{equation}
It follows from \eqref{def-ansatz-shock}, Lemmas \ref{Lem-periodic}, \ref{Lem-shift} and \ref{lem-shock-prop} that
\begin{align*}
	&\norm{[n^\sha,m^\sha,\phi^\sha]-[\nsinf,\msinf,\phisinf]}_{W^{1,\infty}}\\
	&\lesssim\norm{[n^s_\X,m^s_\Y,\phi^s_\X]-[\nsinf,\msinf,\phisinf]}_{W^{1,\infty}}+\norm{[\rho^\pm,w^\pm,\varphi^\pm]}_{W^{1,\infty}}\nonumber\\
	&\lesssim\delta(|\X(t)-\X_\infty|+|\Y(t)-\Y_\infty|)+\nu e^{-\alpha t}\lesssim\nu e^{-\alpha t}.
\end{align*}
If $\nu$ is sufficiently small, then 
\begin{align}\label{nsha-bound}
	\frac{1}{2}\nb^+\lesssim\nsinf-\nu e^{-\alpha t}\lesssim n^\sha\lesssim\nsinf+\nu e^{-\alpha t}<2\nb^-.
\end{align} 
Thus, we also have
\begin{align}\label{usha-u-bdd}
	\norm{u^\sha-\usinf}_{W^{1,\infty}}\lesssim\norm{n^\sha-\nsinf}_{W^{1,\infty}}+\norm{m^\sha-\msinf}_{W^{1,\infty}}\lesssim\nu e^{-\alpha t}.
\end{align}
Therefore, $q_1,\cdots, q_6$ have the following properties, which is useful in the \textit{a priori} estimates,
\begin{align}\label{qi-est}
	q_1& 
	=\O(1) \nu e^{-\alpha t}\left(\abs{\mt}+\abs{\nt}+\abs{\p_x\phit}\right)  , \\
	q_2& =\O(1)\left(\abs{\mt}^2+\abs{\nt}^2+\abs{\p_x\phit}^2\right) , \\
	q_3&=\O(1) \nu e^{-\alpha t}\left(\abs{\mt}+\abs{\nt}\right) , \\
	q_4&=\O(1)\left(\abs{\mt}^2+\abs{\nt}^2\right) ,\\
	q_5&=\O(1) \nu e^{-\alpha t}\abs{\phit},\\
	q_6&=\O(1)\abs{\phit}^2.
\end{align} 
\subsection{\textit{A priori} estimates} 

In the rest of the paper, we use $C$ to represent the generic positive constant, and ``$f\lesssim g$'' to represent ``$f\leq C g$ '' for some constant $C>0$.

\begin{proposition}[\textit{A priori} estimates]
	\label{prop-apriori-shock}
	For any $T>0$, assume that $[\Phi, \Psi, \phit]$ is a smooth solution to \eqref{eqn-anti-shock} and \eqref{init-anti}. Then there exist positive constants $\delta_0, \gamma_0, \varepsilon_0$ independent of $T$ such that if $\delta<\delta_0, \nu<\gamma_0 \delta$, and 
	\begin{equation}
		\label{eqn-assum-shock}
		\varepsilon:=\sup\limits_{t\in[0,T]} \norm{[\Phi, \Psi, \phit]}_{H^2} < \varepsilon_0,
	\end{equation}
then we have 
\begin{equation}
	\label{eqn-aprioi-shock}
	\begin{aligned}
		\sup\limits_{t\in[0,T]} \norm{[\Phi, \Psi, \phit]}_{H^2}^2+
		&\int_{0}^{T} \Big(\norm{ \abs{\p_x\usinf }^{1/2}\Psi }^2 + \norm{[\Phi_x , \phit_t]}_{H^1}^2 +\norm{[\Psi_x ,  \phit]}_{H^2}^2 \Big) \, dt\\
		&\qquad\qquad
		 \lesssim  \norm{[\Phi_0, \Psi_0]}_{H^2}^2 + \nu.
	\end{aligned}
\end{equation}
\end{proposition}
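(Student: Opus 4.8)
The plan is to establish the a priori estimate \eqref{eqn-aprioi-shock} via a weighted energy method on the system of anti-derivatives \eqref{eqn-anti-shock}, closing the estimate under the smallness assumption \eqref{eqn-assum-shock} together with $\delta<\delta_0$ and $\nu<\gamma_0\delta$. The overall strategy mirrors the classical Goodman--Matsumura--Nishihara approach for viscous shocks, but it must be adapted in two essential ways: the self-consistent potential $\phit$ is coupled through the elliptic equation \subeqref{eqn-anti-shock}{3}, and the source terms $H_1,H_2$ (the anti-derivatives of $h_1,h_2$) together with $q_1,\dots,q_6$ are non-localized but time-decaying, so their contributions must be absorbed using the decay estimates \eqref{qi-est} and the bound $\norm{H_i}$ coming from Lemma \ref{Lem-shift}. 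First I would handle the elliptic equation: from \subeqref{eqn-anti-shock}{3}, i.e. $\p_x^2\phit-\nt-e^{-\phisinf}\phit=h_3+q_5+q_6$, testing against $\phit$ and integrating by parts gives control of $\norm{\p_x\phit}^2+\norm{\phit}^2$ in terms of $\norm{\nt}^2=\norm{\p_x\Phi}^2$ plus the quadratic and time-decaying errors; differentiating and repeating yields $\norm{\phit}_{H^2}$ bounded by $\norm{\nt}_{H^1}$ modulo good errors. This is the mechanism by which the electric-field terms are ultimately expressed through the hydrodynamic unknowns.

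Next I would carry out the zeroth-order energy estimate. Multiplying \subeqref{eqn-anti-shock}{1} and \subeqref{eqn-anti-shock}{2} by appropriate multipliers — the standard choice being $(A+1-\abs{\usinf}^2)\Phi$ paired with $\Psi$ so as to symmetrize the principal hyperbolic part — and integrating over $\R$, one produces the basic energy $\norm{[\Phi,\Psi]}^2$ together with the crucial good term $\int \abs{\p_x\usinf}^{1/2}\Psi$-squared, which arises from the shock-profile derivative and uses the sign/monotonicity information \eqref{eqn-shock} from Lemma \ref{lem-shock-prop}. The viscous term $-\p_x(\tfrac{1}{\nsinf}(\p_x\Psi-\usinf\p_x\Phi))$ generates the dissipation $\norm{\p_x\Psi}^2$ (using the lower bound $\nsinf\geq \tfrac12\nb^+$ from \eqref{nsha-bound}), while the electric-field coupling $\p_x\phi^s_\infty\,\p_x\phit+\p_x^2\phit$ is moved to the right and controlled using the elliptic estimates just obtained. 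Here the smallness of $\delta$ is used to guarantee that the quadratic cross terms from $q_2,q_4,q_6$ and the transport corrections are dominated by the dissipation and the good $\abs{\p_x\usinf}^{1/2}$-weighted term. The source terms $H_1,H_2,q_1,q_3,q_5$ all carry a factor $\nu e^{-\alpha t}$, so after Cauchy--Schwarz and a time integration they contribute only $O(\nu)$ on the right-hand side, consistent with \eqref{eqn-aprioi-shock}.

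Then I would differentiate the system once and twice in $x$ (and use the equations to convert $\p_t$-derivatives, which is how the $\norm{\phit_t}_{H^1}$ term enters) and repeat the weighted energy argument to recover $\norm{[\Phi_x,\Psi_x]}_{H^1}^2$ in $L^\infty_t$ and the dissipation $\norm{\Phi_x}_{H^1}^2+\norm{\Psi_x}_{H^2}^2$ in $L^2_t$. At each order the newly created commutator terms involving derivatives of the ansatz are of the schematic form $\sR=\sR_{\rm Ti}\sR_{\rm Sp}$ or quadratic in the perturbation, and are handled exactly as at the zeroth order; the nonlinear terms such as $q_2\sim\abs{\nt}^2+\abs{\mt}^2+\abs{\p_x\phit}^2$ are closed by the a priori smallness $\varepsilon<\varepsilon_0$ via Sobolev embedding $\norm{f}_{L^\infty}\lesssim\norm{f}_{H^1}$. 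Summing the estimates over all orders and choosing $\delta_0,\gamma_0,\varepsilon_0$ small enough to absorb all borderline terms into the left-hand side yields \eqref{eqn-aprioi-shock}.

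I expect the main obstacle to be the simultaneous control of the coupled electric-field terms and the non-localized source terms at the top derivative order. The delicate point is that $\p_x\phit$ and $\p_x^2\phit$ appear both in the momentum equation and inside the nonlinear errors $q_1,q_2$, so one cannot estimate the hydrodynamic and elliptic parts independently; instead the elliptic bounds $\norm{\phit}_{H^2}\lesssim\norm{\nt}_{H^1}+(\text{good errors})$ must be fed back carefully so that the resulting constant in front of $\norm{\p_x\Phi}_{H^1}^2$ stays strictly below the dissipation coefficient. Ensuring that this feedback does not destroy the coercivity of the energy — in particular that the factor $e^{-\phisinf}>0$ in the elliptic operator provides the needed positivity and that the shock-strength smallness $\delta_0$ keeps $A+1-\abs{\usinf}^2$ suitably positive via the Lax condition \eqref{entropy} — is the heart of the argument and where the coupling with the Poisson equation makes the present problem genuinely harder than the isentropic Navier--Stokes case of \cite{Huang.Y2021}.
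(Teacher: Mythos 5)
Your overall architecture (elliptic bounds for $\phit$, a weighted zeroth-order energy that symmetrizes the hyperbolic part, then higher-order estimates closed by smallness of $\delta,\nu,\varepsilon$) matches the paper's, but there is a genuine gap at the single most delicate step: the treatment of the electric-field coupling $-\left(\p_x^2\phit,\Psi/L\right)$ in the zeroth-order estimate, where $L=A+1-\abs{\usinf}^2$. You propose to move this term to the right-hand side and control it ``using the elliptic estimates just obtained,'' i.e.\ essentially by $\norm{\p_x^2\phit}\,\norm{\Psi}$ together with $\norm{\phit}_{H^2}\lesssim\norm{\Phi_x}_{H^1}+\cdots$. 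This cannot close: $\Psi$ itself carries no $L^2_t$ dissipation --- only $\norm{\abs{\p_x\usinf}^{1/2}\Psi}$ and $\norm{\Psi_x}$ are dissipated --- so such a bound produces either $\int_0^T\norm{\Psi}^2\,dt$ (controlled by nothing; even the crude bound $\varepsilon^2T$ grows in $T$) or, after an integration by parts, $C\int_0^T\norm{\Phi_x}^2\,dt$ with an $O(1)$ constant. The latter wrecks the bookkeeping: the only place $\int\norm{\Phi_x}^2dt$ is produced is the first-order estimate (Lemma \ref{lem-1st-shock}), which itself consumes $\int\norm{\Psi_x}^2dt$ with an $O(1)$ constant coming from the zeroth-order estimate, so the mutual absorption closes only if the zeroth-order estimate leaves $\int\norm{\Phi_x}^2dt$ with a \emph{small} coefficient $(\delta+\varepsilon+\nu)$, as in Lemma \ref{lem-0th-shock} of the paper. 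No choice of Cauchy--Schwarz weights achieves that.

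The paper's resolution is structural rather than perturbative: using the continuity equation $\Psi_x=-\Phi_t+H_1$, the identity $\Phi_x=\nt$, and the Poisson equation \subeqref{eqn-anti-shock}{3}, the term $-\left(\p_x^2\phit,\Psi/L\right)$ is rewritten, after integrations by parts in both $x$ and $t$, as the exact time derivative
\begin{equation*}
	\frac{1}{2}\frac{d}{dt}\left\{\left(\phit_x,\frac{\phit_x}{L}\right)+\left(\phit,\frac{e^{-\phisinf}\phit}{L}\right)-2\left(\phit_x,\frac{\Phi}{L}\right)+2\left(\phit,\frac{L_x}{L^2}\Phi\right)\right\}
\end{equation*}
plus remainders carrying small factors ($\delta$, $\varepsilon$, or $\nu e^{-\alpha t}$), and the resulting total energy, including the cross term $-2(\phit_x,\Phi/L)$, is shown to be positive definite by a discriminant argument using $L>1$. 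This conversion into energy form is precisely what your ``feedback of elliptic bounds with constants below the dissipation coefficient'' would need to accomplish, but it cannot be done by Cauchy--Schwarz; it requires using the equations themselves. A secondary gap: you take the bounds on $H_1,H_2$ for granted, but $h_1,h_2$ contain non-localized pieces such as $\left\{-\jump{n}(s+\X')+\jump{m}\right\}\p_x\sigma_{\X}$, and their anti-derivatives lie in $L^2$ (with the stated $\nu\delta^{-1/2}e^{-\alpha t}$ bound) only because the shifts $\X,\Y$ are chosen so that $\int_\R h_i\,dx=0$, which allows one to write $H_i=-\int_x^{+\infty}h_i\,dy$ on one side of the shock; this is the content of Lemma \ref{h-estimates} and must be established before any of the energy estimates can start.
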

Now we define 
\begin{equation}
	\label{def-L}
	L:=A+1-\abs{\usinf} ^2.
\end{equation}
It follows from \eqref{def-L} and \eqref{eqn-assum-shock} that
\begin{equation}\label{eqn-L}
	1<L<A+1, \quad \p_t L =2s \usinf \p_x \usinf, \quad \p_x L =-2 \usinf \p_x \usinf.
\end{equation}
The proof of Proposition \ref{prop-apriori-shock} consists of the following series of lemmas.

%%%%%
The following estimate of $h_1,h_2,h_3$ and $H_1,H_2$ are useful in the \textit{a priori} estimates.
\begin{lemma}\label{h-estimates}
	Under the assumptions of Proposition \ref{prop-apriori-shock}, there exist positive constants $\delta_0, \gamma_0, \varepsilon_0$ independent of $T$ such that if $\delta<\delta_0, \nu<\gamma_0 \delta$, and $\varepsilon <\varepsilon_0$, then
	\begin{align}
		\norm{h_1,h_2,h_3}_{H^2}&\lesssim \nu\delta^\frac{1}{2}e^{-\alpha t}, \label{h-esti-eq1}\\
		\norm{H_1,H_2}&\lesssim\nu\delta^{-\frac{1}{2}}e^{-\alpha t}. \label{h-esti-eq2}
	\end{align}
\end{lemma}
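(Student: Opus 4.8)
The plan is to estimate $h_1,h_2,h_3$ and their antiderivatives term by term, exploiting the explicit decomposition \eqref{eqn-error-shock} together with the sharp $L^2$-sizes of the localized shock weights. The three building blocks I would record first are, from Lemma \ref{lem-shock-prop}, that $\norm{\p_x^k\sigma}_{L^2}\lesssim \delta^{k-1/2}$ (each $x$-derivative of the profile costs one power of $\delta$), that $\norm{(1-\sigma)\sigma}_{L^2}\lesssim\delta^{-1/2}$, and that the antiderivative of any localized zero-mean density $a\,\p_x\sigma_{\X}$ obeys $\abs{\int_{-\infty}^x a\,\p_x\sigma_{\X}\,dy}\lesssim\norm{a}_{L^\infty}\min(\sigma_{\X},1-\sigma_{\X})$, whose $L^2$-norm is again of size $\delta^{-1/2}$. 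From Lemma \ref{Lem-periodic} the periodic factors $\rho^\pm,w^\pm,\varphi^\pm$ and their derivatives are $\sR_{\rm Ti}$ with $\norm{\cdot}_{L^\infty}\lesssim\nu e^{-2\alpha t}$ (the $H^3$-regularity covers the two $x$-derivatives needed for the $H^2$-norm), and from Lemma \ref{Lem-shift} both $\abs{\X'},\abs{\Y'}\lesssim\nu e^{-2\alpha t}$ and $\abs{\X-\Y}\lesssim\nu\delta^{-1}e^{-2\alpha t}$.

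For $h_1$ I would rewrite the bracket $-\jump{n}(s+\X')+\jump{m}$ using the Rankine--Hugoniot relation $s\jump{\nb}=\jump{\mb}$ as $-\jump{\nb}\X'-\jump{\rho}(s+\X')+\jump{w}$, which is $\sR_{\rm Ti}$ of size $\nu e^{-2\alpha t}$; multiplied by $\p_x\sigma_{\X}$ this gives an $L^2$-contribution $\lesssim\nu\delta^{1/2}e^{-2\alpha t}$. The profile-difference $\p_x(m^s_{\Y}-m^s_{\X})$ is handled by the mean value theorem in the shift variable, producing a factor $\X-\Y$ against $\p_x^2 m^s$, hence an even smaller $L^2$-size $\lesssim\nu\delta^{3/2}e^{-2\alpha t}$; and $\p_x\sR$ is estimated by distributing the derivative, noting that $\p_x((1-\sigma)\sigma)=(1-2\sigma)\p_x\sigma$ is of the good $\p_x\sigma$-type. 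The same scheme applies to $h_2$: using the second relation in \eqref{RH} to cancel the constant part $s\jump{\mb}$, the coefficient of $\p_x\sigma_{\Y}$ collapses to $-\jump{\mb}\Y'-\jump{w}(s+\Y')$ plus decaying periodic oscillations of the solution, all $\sR_{\rm Ti}$; the $\X$-versus-$\Y$ profile bracket is again an $\X-\Y$ difference. For $h_3$ the decisive observation is that, inserting the periodic identities $\p_x^2\varphi^\pm=\rho^\pm-(e^{-\phi^\pm}-e^{-\phib^\pm})$ and the profile identity $\p_x^2\phi^s=n^s-e^{-\phi^s}$ into $\p_x^2\phi^\sha-n^\sha+e^{-\phi^\sha}$, the $O(1)$ contributions cancel identically and leave only $2(\p_x\varphi^--\p_x\varphi^+)\p_x\sigma_{\X}+(\varphi^--\varphi^+)\p_x^2\sigma_{\X}+\sR$, again of size $\nu\delta^{1/2}e^{-2\alpha t}$.

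The one genuinely delicate point, and the place where the hypothesis $\nu<\gamma_0\delta$ is indispensable, is the quadratic remainder $\sR$ produced by Lemma \ref{lem-error} when the nonlinearities (e.g. $m^2/n$ and $e^{-\phi}$) are expanded: its non-decaying spatial factor can be of the bad type $(1-\sigma)\sigma$ rather than $\p_x\sigma$, and $\norm{(1-\sigma)\sigma}_{L^2}\sim\delta^{-1/2}$. However, such a term always carries a coefficient that is \emph{quadratic} in the periodic perturbations, of size $\nu^2 e^{-4\alpha t}$, so its $L^2$-norm is $\lesssim\nu^2\delta^{-1/2}e^{-4\alpha t}=(\nu/\delta)\,\nu\delta^{1/2}e^{-4\alpha t}\lesssim\gamma_0\,\nu\delta^{1/2}e^{-\alpha t}$ precisely because $\nu<\gamma_0\delta$. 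This trade of one factor of $\delta^{-1}$ against the smallness $\nu/\delta<\gamma_0$ is what forces the scaling between the periodic amplitude and the shock strength, and it must be checked to survive the two $x$-differentiations in the $H^2$-norm (each $\p_x$ landing on a $\sigma$-factor is harmless, landing on a periodic factor keeps it $\sR_{\rm Ti}$ by Lemma \ref{Lem-periodic}). Collecting all contributions yields \eqref{h-esti-eq1}.

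Finally, for \eqref{h-esti-eq2} I would first note that $H_1,H_2$ are well defined in $L^2$ precisely because the shift ODEs \eqref{ode-shift} enforce $\int_\R h_1=\int_\R h_2=0$: the antiderivatives of the exact-derivative pieces $\p_x(m^s_{\Y}-m^s_{\X})$ and $\p_x\sR$ vanish at both infinities, while the remaining piece $a\,\p_x\sigma_{\X}$ (resp. its $\Y$-analogue) has zero total integral by the very construction of $\X'$ (resp. $\Y'$). The dominant term of $H_1$ is then $\int_{-\infty}^x a\,\p_x\sigma_{\X}\,dy$, bounded as above by $\norm{a}_{L^\infty}\min(\sigma_{\X},1-\sigma_{\X})$ with $L^2$-size $\lesssim\nu\delta^{-1/2}e^{-2\alpha t}$; the profile-difference and quadratic pieces contribute at worst $\nu\delta^{-1/2}e^{-\alpha t}$, again using $\nu<\gamma_0\delta$ for the $(1-\sigma)\sigma$ remainders. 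The main obstacle throughout is the bookkeeping of the exact power of $\delta$ attached to every term: the qualitative decay is immediate from the generic $\sR=\sR_{\rm Ti}\sR_{\rm Sp}$ calculus, but the quantitative $\delta^{\pm1/2}$ weights, and in particular isolating which remainders are $\p_x\sigma$-type versus $(1-\sigma)\sigma$-type, require the careful term-by-term expansion sketched above.
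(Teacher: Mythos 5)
Your proposal is correct and follows essentially the same route as the paper's proof: the same decomposition of $h_1$ (and $h_2$) into the Rankine--Hugoniot bracket times $\p_x\sigma_{\X}$, the shifted-profile difference $\p_x(m^s_{\Y}-m^s_{\X})$, and the quadratic remainder $\p_x\sR$; the same scalings $\norm{\p_x^k\sigma}_{L^2}\lesssim\delta^{k-1/2}$ from Lemma \ref{lem-shock-prop}; and the same zero-mass argument (built into the shift ODEs) plus the $\min(\sigma_{\X},1-\sigma_{\X})$ bound, of $L^2$-size $\delta^{-1/2}$, for the anti-derivatives $H_1,H_2$. The one harmless imprecision is your claim that every $(1-\sigma)\sigma$-type remainder carries a coefficient \emph{quadratic} in $\nu$: the expansion in Lemma \ref{lem-error} also produces such remainders with coefficients linear in $\nu$ but multiplied by an extra factor $\delta$ (coming from $u^*-\ub^-=\jump{\ub}g_0$, etc.), and these give the desired $\nu\delta^{1/2}e^{-\alpha t}$ bound directly, without invoking $\nu<\gamma_0\delta$.
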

\begin{proof}
	For $h_1$, we can write it as 
	\begin{equation}
		h_1=\underbrace{\left\{ - \jump{n}(s+ \X') + \jump{m} \right\} \p_x\sigma_{\X}}_{h_{1,1}} + \underbrace{\p_x (m^s_{\Y}-m^s_{\X})}_{h_{1,2}} + \underbrace{\p_x \sR}_{h_{1,3}}.
	\end{equation}
	It is suffices to estimates $h_{1,1,}$ and $h_{1,2}$. It follows from \subeqref{RH}{1} that
	\begin{equation*}
		h_{1,1}=(-\jump{\nb}\X'-\jump{\rho}(s+\X')+\jump{w})\p_x\sigma_{\X}.
	\end{equation*}
By using Lemma \ref{Lem-periodic} and Lemma \ref{Lem-shift}, one has
\begin{equation*}
	\norm{\jump{\nb}\X'}_{W^{2,\infty}}+\norm{\jump{\rho}(s+\X')}_{W^{2,\infty}}+\norm{\jump{w}}_{W^{2,\infty}}\lesssim\nu e^{-\alpha t}.
\end{equation*}
By using Lemma \ref{lem-shock-prop}, one has
	\begin{align*}
		\norm{\p_x^k\sigma_{\X}}\lesssim\delta^{-1}\norm{\p_\xi^k n^s}\lesssim \delta^{-1}\delta^{k+1}\|e^{-\theta\delta \abs{x}}\|\lesssim\delta^{k-\frac{1}{2}} \quad \text{for~} k=1,2,3.
	\end{align*}
Thus, we have
\begin{equation*}
	\norm{h_{1,1}}_{H^2}\lesssim\nu \delta^\frac{1}{2} e^{-\alpha t}.
\end{equation*}
Similarly, by using Lemma \ref{Lem-shift} and Lemma \ref{lem-shock-prop}, we have
\begin{equation*}
	\norm{h_{1,2}}_{H^2}\lesssim\norm{\p_\xi^2m^s}_{H^2}(\norm{\X-\X_\infty}_{L^\infty}+\norm{\Y-\Y_\infty}_{L^\infty})\lesssim \nu\delta^\frac{3}{2} e^{-\alpha t}.
\end{equation*}
As stated above, for the problem of the stability of shock profiles, $\sR$ in $h_i$ $(i=1,2,3)$ are in the form of $\sR=\sR_{\rm Ti} \sR_{\rm Sp}$, and $\sR_{\rm Ti}$ are the periodic functions and their derivatives, and $\sR_{\rm Sp}$ are the derivatives of the background shock profiles. Therefore, 
\begin{align*}
	\norm{h_{1,3}}_{H^2}
	&\lesssim \norm{\p_x (\sR_{\rm Ti} \sR_{\rm Sp})}_{H^2} 
	\lesssim (\norm{\p_x^3 \sR_{\rm Ti} }_{L^\infty} \norm{\sR_{\rm Sp}}+\cdots + \norm{ \sR_{\rm Ti} }_{L^\infty} \norm{\p_x^3\sR_{\rm Sp}} ) \\
	&\lesssim \nu e^{-2\alpha t} \sum_{k=1}^{4} \norm{\p_x^k\sigma_{\X}} \lesssim \nu\delta^\frac{1}{2} e^{-\alpha t}.
\end{align*}
For the estimate of $H_1$, it follows from Lemma \ref{Lem-shift} and Lemma \ref{lem-shock-prop} that
\begin{equation*}
	\norm{\int_{-\infty}^{x}h_{1,2} dy}
	\lesssim\norm{m_\X^s-m_\Y^s}
	\lesssim\norm{\p_xm^s}(\norm{\X-\X_\infty}_{L^\infty}+\norm{\Y-\Y_\infty}_{L^\infty})
	\lesssim \nu\delta^{\frac{1}{2}} e^{-\alpha t}.
\end{equation*}
It is obvious to yield that
\begin{equation*}
	\norm{\int_{-\infty}^{x}h_{1,3} dy}\lesssim\norm{\sR}
	\lesssim \norm{\sR_{\rm Ti} \sR_{\rm Sp}}  
		\lesssim \nu e^{-2\alpha t} \norm{\p_x\sigma_{\X}}
	\lesssim\nu\delta^{\frac{1}{2}} e^{-\alpha t}.
\end{equation*}
Now we estimate $H_{1,1}:=\int_{-\infty}^{x}h_{1,1}(y)dy$. For $x<st+\X$,
we compute
\begin{align*}
	H_{1,1}=&-\jump{\nb}\X'\int^{x}_{-\infty}\sigma'(y-st-\X)dy-(s+\X')\int^{x}_{-\infty}\jump{\rho}\sigma'(y-st-\X)dy\\
	&+\int^{x}_{-\infty}\jump{w}\sigma'(y-st-\X)dy.
\end{align*}
Noting that $\sigma'(\xi)>0$, we have
	\begin{align*}
		\int_{-\infty}^{st+\X} \abs{H_{1,1}}^2 dx
		&\lesssim \int_{-\infty}^{st+\X} \abs{ \nu e^{-2\alpha t} \int^{x}_{-\infty}\sigma'(y-st-\X)dy }^2 dx\\
		& = \int_{-\infty}^{st+\X} \abs{ \nu e^{-2\alpha t} \jump{\nb}^{-1} (n^s(x-st-\X)-\nb^-) }^2 dx\\
		&\lesssim \nu^2 e^{-4\alpha t}  \int_{-\infty}^{0} e^{-2\theta\delta \abs{\xi}}   d\xi
		\lesssim  \nu^2\delta^{-1}e^{-2\alpha t}.
	\end{align*}
Thus, we have
\begin{equation*}
	\int_{-\infty}^{st}|H_{1,1}|^2dx\lesssim \nu^2e^{-2\alpha t}\delta^{-1}.
\end{equation*}
For $x>st+\X$, we consider $H_{1,1}$ in the form $H_{1,1}=-\int_{x}^{+\infty} h_{1,1}(y)dy$, since $\int_{-\infty}^{+\infty} h_{1,2} dy=\int_{-\infty}^{+\infty} h_{1,3} dy =0$ and $\X, \Y$ are set such that $\int_{-\infty}^{+\infty} h_1 dy =0$ and thus $\int_{-\infty}^{+\infty} h_{1,1} dy =0$ as well. We can use the same argument to yield that
\begin{equation*}
	\int_{st}^{+\infty}|H_{1,1}|^2dx\lesssim \nu^2\delta^{-1}e^{-2\alpha t}.
\end{equation*}
The estimates of $h_2,H_2$ and $h_3$ follows from the same argument, we omit the details here. 
\end{proof}

\begin{lemma}
	\label{lem-0th-shock}
 Under the assumptions of Proposition \ref{prop-apriori-shock}, there exist positive constants $\delta_0, \gamma_0, \varepsilon_0$ independent of $T$ such that if $\delta<\delta_0, \nu<\gamma_0 \delta$, and $\varepsilon <\varepsilon_0$, then
 \begin{equation}
 	\label{ineq-0th-shock}
 	\begin{aligned}
 		&\sup\limits_{t\in[0,T]} \norm{[\Phi, \Psi, \phit,\phit_x]}^2+
 		\int_{0}^{T} \norm{ [\abs{\p_x\usinf }^{1/2}\Psi, \Psi_x] }^2 \, dt  \\
 		&\qquad\qquad
 		\lesssim  \norm{[\Phi_0, \Psi_0]}^2 + \norm{ \phit_0}_{H^1}^2 +(\delta+\varepsilon+\nu) \int_{0}^{T} \norm{ [\Phi_x,\phit_t,\phit_x,\phit] }^2 \, dt +\nu .
 	\end{aligned}
 \end{equation}
\end{lemma}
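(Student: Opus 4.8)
The plan is to run a weighted energy estimate on the anti-derivative system \eqref{eqn-anti-shock}, while controlling the potential $\phit$ separately through the elliptic equation \subeqref{eqn-anti-shock}{3}. Concretely, I would multiply the continuity equation \subeqref{eqn-anti-shock}{1} by $L\Phi$, with $L=A+1-\abs{\usinf}^2$ from \eqref{def-L}, multiply the momentum equation \subeqref{eqn-anti-shock}{2} by $\Psi$, add them and integrate over $\R$. Three favorable contributions then appear: the time derivative $\frac12\p_t\big(\norm{\Psi}^2+\int_\R L\Phi^2\,dx\big)$; the convective term, since $\int_\R 2\usinf\Psi\,\p_x\Psi\,dx=-\int_\R\p_x\usinf\,\Psi^2\,dx=\norm{\abs{\p_x\usinf}^{1/2}\Psi}^2$, where the sign $\p_x\usinf<0$ furnished by \eqref{eqn-shock} in Lemma \ref{lem-shock-prop} is essential; and the viscous term, whose integration by parts produces $\int_\R\frac{1}{\nsinf}(\p_x\Psi)^2\,dx\sim\norm{\p_x\Psi}^2$ together with a cross term $\int_\R\frac{\usinf}{\nsinf}\p_x\Psi\,\p_x\Phi\,dx$ that Young's inequality splits into a fraction of the $\p_x\Psi$ dissipation and a $\delta$-small multiple of $\norm{\p_x\Phi}^2$.

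Two blocks of terms must then be reorganized. The symmetric cross terms combine as $\int_\R L(\Phi\,\p_x\Psi+\Psi\,\p_x\Phi)\,dx=-\int_\R\p_x L\,\Phi\Psi\,dx$, which is localized because $\p_x L=-2\usinf\p_x\usinf$ by \eqref{eqn-L}; together with $\int_\R\abs{\p_x\usinf}\,dx\lesssim\delta$ and $\norm{\Phi}_{L^\infty}^2\lesssim\norm{\Phi}\norm{\p_x\Phi}$ they are dominated by a $\delta$-multiple of $\norm{\p_x\Phi}^2$ plus a small share of the $\Psi$-dissipation. The genuinely new difficulty is the electric-field coupling $-\int_\R\Psi\big(\p_x\phisinf\,\p_x\phit+\p_x^2\phit\big)\,dx$, which falls outside the hyperbolic--parabolic structure of the $(\Phi,\Psi)$ block. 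The decisive step is to substitute the Poisson relation \subeqref{eqn-anti-shock}{3}, namely $\p_x^2\phit=\nt+e^{-\phisinf}\phit+h_3+q_5+q_6$ with $\nt=\p_x\Phi$: the leading piece then reads $-\int_\R\Psi\,\p_x\Phi\,dx=\int_\R\Phi\,\p_x\Psi\,dx=-\frac12\p_t\norm{\Phi}^2+\int_\R\Phi H_1\,dx$ after invoking \subeqref{eqn-anti-shock}{1}, so it merges with the fluid energy and renormalizes the coefficient multiplying $\Phi^2$ in the time derivative; this renormalized coefficient is kept positive definite by the entropy condition \eqref{entropy} together with the smallness of $\delta$. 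The residual pieces $\int_\R e^{-\phisinf}\Psi\phit\,dx$ and $\int_\R\Psi\,\p_x\phisinf\,\p_x\phit\,dx$ are then bounded by Cauchy--Schwarz against the $\Psi$-dissipation and the potential bounds obtained next, noting $\norm{\p_x\phisinf}_{L^\infty}\lesssim\delta^2$.

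To close the potential, I would multiply \subeqref{eqn-anti-shock}{3} by $\phit$ and integrate. Since $e^{-\phisinf}$ is bounded below and $-\int_\R\nt\phit\,dx=\int_\R\Phi\,\p_x\phit\,dx$, this yields $\norm{\p_x\phit}^2+\norm{\phit}^2\lesssim\norm{\Phi}^2+(\text{error terms})$, which simultaneously supplies the $\norm{[\phit,\p_x\phit]}^2$ on the left of \eqref{ineq-0th-shock} and controls the residual coupling terms from the previous step. Because $\norm{\Phi}^2$ already sits under the supremum in the energy, no uncontrolled quantity is introduced; in particular the substitution of the Poisson equation above is exactly what prevents an uncontrolled $\norm{\Phi}^2$ from surviving on the right-hand side, which a naive treatment of $\p_x^2\phit$ would have created.

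Finally the inhomogeneities are estimated. By Lemma \ref{h-estimates} one has $\int_0^T\norm{H_1,H_2,h_3}^2\,dt\lesssim\nu$, using $\nu<\gamma_0\delta$ so that $\nu^2\delta^{-1}\lesssim\nu$; paired through weighted Young's inequalities against the $\Psi$- and $\phit$-dissipation (the localization of $H_1,H_2$ near the shock being used here), they leave the $O(\nu)$ remainder on the right of \eqref{ineq-0th-shock}. Among the $q_i$, the terms linear in the perturbation, $q_1,q_3,q_5$, carry a factor $\nu e^{-\alpha t}$ by \eqref{qi-est} and are absorbed into the dissipation up to an $O(\nu)$ remainder, while the quadratic terms $q_2,q_4,q_6$ carry an $O(\varepsilon)$ factor via the Sobolev embedding $H^2\hookrightarrow L^\infty$ and generate the $(\delta+\varepsilon+\nu)\int_0^T\norm{[\p_x\Phi,\phit_t,\p_x\phit,\phit]}^2\,dt$ contribution. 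Collecting all estimates, choosing $\delta,\varepsilon,\nu$ small to absorb the non-positive terms and integrating in $t$ over $[0,T]$ gives \eqref{ineq-0th-shock}. I expect the main obstacle to be exactly this electric-field reorganization: the Poisson equation must be used so that the combined fluid-and-potential energy stays positive definite and no uncontrolled $\norm{\Phi}^2$ remains, whereas the non-localized, merely time-decaying errors are already tamed at the level of the ansatz, since Lemma \ref{h-estimates} guarantees they enter only as localized, time-integrable multiples of the background shock's derivatives.
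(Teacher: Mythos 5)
Your plan fails at exactly the point you call decisive: the treatment of the coupling term $-\int_\R \Psi\,\p_x^2\phit\,dx$. Substituting the Poisson equation directly gives
\begin{equation*}
-\int_\R \Psi\,\p_x^2\phit\,dx=-\int_\R \Psi\,\p_x\Phi\,dx-\int_\R e^{-\phisinf}\,\Psi\,\phit\,dx-\int_\R \Psi\,(h_3+q_5+q_6)\,dx ,
\end{equation*}
and while the first piece indeed merges into the time derivative as you describe, the second piece $-\int_\R e^{-\phisinf}\Psi\phit\,dx$ survives with an order-one, spatially non-degenerate coefficient ($e^{-\phisinf}\to\nb^\pm>0$ at infinity). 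This term cannot be ``bounded by Cauchy--Schwarz against the $\Psi$-dissipation'': the only dissipation available for $\Psi$ is $\norm{\abs{\p_x\usinf}^{1/2}\Psi}^2+\norm{\Psi_x}^2$, whose zeroth-order part carries the weight $\abs{\p_x\usinf}\lesssim\delta^2e^{-\theta\delta\abs{x-st-\X_\infty}}$, degenerate as $\abs{x}\to\infty$, while $\norm{\phit}^2$ enters \eqref{ineq-0th-shock} only with the small prefactor $(\delta+\varepsilon+\nu)$. Any Young splitting therefore leaves a multiple of $\int_0^T\norm{\Psi}^2\,dt$ (or of $\int_0^T\norm{\Psi}\norm{\phit}\,dt$), a quantity that appears in no dissipation bound of the paper and cannot be absorbed into $\sup_t\norm{\Psi}^2$ uniformly in $T$. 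The paper's proof is organized precisely to avoid this residue: it first integrates $-(\p_x^2\phit,\Psi/L)$ by parts in space, uses the continuity equation to replace $\Psi_x$ by $H_1-\Phi_t$, integrates by parts in time and again in space so that $\phit_t$ ends up paired with $\Phi_x=\nt$, and only \emph{then} invokes the Poisson equation; in that order the dangerous term appears as $(\phit_t,L^{-1}e^{-\phisinf}\phit)$, i.e.\ as the exact time derivative $\frac{1}{2}\frac{d}{dt}(\phit,L^{-1}e^{-\phisinf}\phit)$ plus absorbable errors, and it joins the energy functional, whose positive definiteness is then checked via the discriminant condition $L>1$.

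A second defect, more easily repaired, is your choice of multipliers $L\Phi$ and $\Psi$. Since $L$ is not constant, the hyperbolic cross terms no longer cancel but produce $-\int_\R \p_x L\,\Phi\Psi\,dx$, and the time derivative coming from the first equation sheds $-\frac{1}{2}\int_\R\p_t L\,\Phi^2\,dx$. Both pair the completely undissipated quantity $\Phi$ (in zeroth order) with weights that are localized in space but do \emph{not} decay in time; estimating them through $\norm{\Phi}_{L^\infty}^2\lesssim\norm{\Phi}\norm{\Phi_x}$ still leaves a multiple of $\int_0^T\norm{\Phi}^2\,dt$, which is likewise uncontrolled. The paper multiplies by $\Phi$ and $\Psi/L$ instead, so the cross terms cancel exactly, $\int_\R(\Phi\Psi_x+\Phi_x\Psi)\,dx=0$, and every variable-coefficient correction lands on $\Psi$ or on derivative terms, for which weighted dissipation is available. (Your positivity claim for the renormalized coefficient $L-1$ is correct, but it rests on $A>0$ together with $\abs{\usinf}\lesssim\delta$ after the Galilean normalization, not on the entropy condition; the paper needs the same fact in the form $L>1$.)
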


\begin{proof}
	Multiplying \subeqref{eqn-anti-shock}{1} by $\Phi$ and \subeqref{eqn-anti-shock}{2} by $\Psi/L$, respectively, summing them up and integrating the resulting equations with respect to $x$ over $\R$ gives that 
	\begin{align}\label{0th-shock-energy}
		& \frac{1}{2} \frac{d}{d t}(\Phi, \Phi)+\frac{1}{2} \frac{d}{d t}\left(\frac{\Psi}{L}, \Psi\right)+(L_1 \Psi, \Psi)+\left(\frac{\Psi_x}{L \nsinf}, \Psi_x\right)-\left(\p_x^2 \phit, \frac{\Psi}{L}\right) \nonumber\\
		& \qquad
		=\underbrace{\left(\frac{\p_x L}{L^2 \nsinf} \Psi,  \Psi_x-\usinf \Phi_x\right)}_{\textbf{I}_1}
		+\underbrace{\left(\frac{\usinf}{L \nsinf} \Psi_x , \Phi_x\right)}_{\textbf{I}_2}
		+\underbrace{\left(\p_x \phisinf \phit_x, \frac{\Psi}{L}\right)}_{\textbf{I}_3} \\
		& \qquad \quad
		+\underbrace{\left(H_1, \Phi\right)+\left(H_2, \frac{\Psi}{L}\right)}_{\textbf{I}_4}
		+\underbrace{\left(q_1+q_2, \frac{\Psi}{L}\right)-\left(q_3+q_4, \p_x\left(\frac{\Psi}{L}\right)\right)}_{\textbf{I}_5}\nonumber,
	\end{align}
	where 
	\begin{align*} 
	 L_1&=-\p_t \left(\frac{1}{2 L}\right)-\p_x\left(\frac{\usinf}{L}\right)
	     =\frac{1}{2 L^2}(\p_t L+2 \usinf\p_x L-2 \p_x \usinf L) \\
		& =-\frac{1}{L^2}(L-s \usinf+2\abs{\usinf}^2) \p_x \usinf \geq \frac{1}{2}\abs{\p_x \usinf} ,
	\end{align*}
when $\delta$ is small. 

The term $-(\p_x^2 \phit, L^{-1}{\Psi}) $ on the left hand side of \eqref{0th-shock-energy} provides the energy of $\phit$. Indeed, we utilize integration by parts and \subeqref{eqn-anti-shock}{1} to get
    \begin{align*}
    	%%1st
		-\left(\p_x^2 \phit, \frac{\Psi}{L}\right)
		=\left(\phit_x, \frac{\Psi_x}{L}\right)-\left( \phit_x,\frac{L_x }{L^2}\Psi\right)=\left( \phit_x,-\frac{\Phi_t }{L}\right) +\left(\phit_x, \frac{H_1}{L}\right)
		-\left(\phit_x,\frac{L_x }{L^2}\Psi\right).
	\end{align*}
	Integration by parts again, we have
	    \begin{align*}
		\left( \phit_x,-\frac{\Phi_t }{L}\right)
		=&-\frac{d}{d t}\left(\phit_x, \frac{\Phi}{L}\right)
		+\left(\p_t\phit_x, \frac{\Phi}{L}\right) 
		+\left(\phit_x,-\frac{L_t}{L^2} \Phi\right) \\
		=&-\frac{d}{d t}\left(\phit_x, \frac{\Phi}{L}\right)-\left(\phit_t, \frac{ \Phi_x}{L}\right)+\left(\phit_t, \frac{L_x }{L^2}\Phi\right)
		+\left(\phit_x,-\frac{L_t}{L^2} \Phi\right) \\
		=&-\frac{d}{d t}\left(\phit_x, \frac{\Phi}{L}\right)-\left(\phit_t, \frac{ \Phi_x}{L}\right)+\frac{d}{dt}\left(\phit,\frac{L_x}{L^2}\Phi\right)-\left(\phit,\frac{L_x}{L^2}\Phi_t\right)+\left(\phit,\frac{L_t}{L^2}\Phi_x\right).
	\end{align*}
	It follows from $\Phi_x=\nt$ and \subeqref{eqn-anti-shock}{3} that
	    \begin{align*}
		-\left(\phit_t, \frac{ \Phi_x}{L}\right)
		=&-\left(\phit_t, L^{-1}(\p_x^2 \phit-e^{-\phisinf } \phit)\right)
		+\left(\phit_t, L^{-1}(q_5+q_6)\right) +\left(\phit_t, L^{-1}h_3\right)\\
		=& 
		\frac{1}{2} \frac{d}{dt}\left(\phit_x,\frac{\phit_x}{L} \right) 
		+\frac{1}{2} \frac{d}{dt} \left( \phit,  \frac{e^{-\phisinf}\phit}{L} \right)
		-\left(\phit_t, \frac{L_x}{L^2} \phit_x  \right)+\frac{1}{2} \left(\phit_x , \frac{L_t}{L^2} \phit_x \right) \\
		&
		-\frac{1}{2} \left(\phit, \p_t (L^{-1} e^{-\phisinf}) \phit \right)
		+\left(\phit_t, L^{-1}(q_5+q_6)\right) +\left(\phit_t, L^{-1}h_3\right).
	\end{align*}
Combining the above equations yields 
\begin{align*}
	-\left(\p_x^2 \phit, \frac{\Psi}{L}\right)
	=&\frac{1}{2} \frac{d}{dt}\left(\p_x \phit,\frac{\p_x \phit}{L} \right) 
	+\frac{1}{2} \frac{d}{dt} \left( \phit,  \frac{e^{-\phisinf}\phit}{L} \right) -\frac{d}{d t}\left(\p_x \phit, \frac{\Phi}{L}\right)+\frac{d}{dt}\left(\phit,\frac{L_x}{L^2}\Phi\right) \\
	&\underbrace{-\left(\phit_t, \frac{L_x}{L^2} \phit_x  \right) +\frac{1}{2} \left(\phit_x , \frac{L_t}{L^2} \phit_x \right)
	-\frac{1}{2} \left(\phit, \p_t (L^{-1} e^{-\phisinf}) \phit \right)}_{-\textbf{I}_6}\\
	&\underbrace{-\left(\phit,\frac{L_x}{L^2}\Phi_t\right)+\left(\phit,\frac{L_t}{L^2}\Phi_x\right)}_{-\textbf{I}_7}+\underbrace{\left(\phit_x, \frac{H_1}{L}\right)}_{-\textbf{I}_8}
	+\underbrace{\left(\phit_t, L^{-1}(q_5+q_6)\right)}_{-\textbf{I}_9}+\underbrace{\left(\phit_t,L^{-1}h_3\right)}_{-\textbf{I}_{10}}.
\end{align*}
Plugging it into \eqref{0th-shock-energy}, we have
\begin{align}\label{0th-shock-energy-1}
\frac{1}{2} \frac{d}{d t}&\left\{(\Phi, \Phi)+\left( \phit_x,\frac{\phit_x }{L} \right) 
-2\left(\phit_x, \frac{\Phi}{L}\right)+\left( \phit,  \frac{e^{-\phisinf}\phit}{L} \right)+\left(\frac{\Psi}{L}, \Psi\right)+2\left(\phit,\frac{L_x}{L^2}\Phi\right)\right\}\nonumber \\
&+(L_1 \Psi, \Psi)+\left(\frac{\Psi_x}{L \nsinf},  \Psi_x\right)=\textbf{I}_1+\cdots+\textbf{I}_9.
\end{align} 
Now we estimate terms $\textbf{I}_1, \cdots, \textbf{I}_9$ one by one. It follows from \eqref{eqn-L}, \eqref{eqn-shock-deriv} and the Young inequality that
\begin{align*}
	\abs{\textbf{I}_1}\lesssim \Big( \abs{\p_x \usinf} \abs{\Psi}, \abs{\p_x \Phi} + \abs{\p_x \Psi}\Big)\lesssim \delta \norm{\abs{\p_x \usinf}^{1/2} \Psi}^2 +\delta \norm{[\Phi_x, \Psi_x] }^2.
\end{align*}
By using the Galilean invariant of NSP system, \eqref{RH} and \eqref{entropy}, we can assume without loss of generality that $\ub^-=-\ub^+>0$ so that the Rankine-Hugoniot condition \subeqref{RH}{1} implies
\[|\ub^-|=|\ub^+|\lesssim|\nb^+-\nb^-|=\delta.\]
 Then, it holds that
 \begin{equation}\label{usinf-bdd}
 	|u^s_\infty|\leq|\ub^{\pm}|\lesssim\delta.
 \end{equation}
Thus, we have
\begin{align*}
		\abs{\textbf{I}_2}\lesssim \delta \norm{[\Phi_x, \Psi_x] }^2.  
\end{align*}
It follows from \eqref{eqn-shock}, \eqref{eqn-shock-deriv} and the Young inequality that
\begin{align*}
	\abs{\textbf{I}_3}\lesssim\left|\left(\frac{(n^s_\infty)^2 \p_xu^s_\infty}{\nb^-|\ub^--s|}\tilde{\phi}_x,\frac{\Psi}{L}\right)\right|\lesssim \delta \norm{\abs{\p_x \usinf}^{1/2} \Psi}^2 +\delta \norm{\phit_x }^2  .
\end{align*}
By using the estimate of $H_1,H_2$, Lemma \ref{h-estimates}, one gets
\begin{align*}
	\abs{\textbf{I}_4} +\abs{{\textbf{I}_8}}\lesssim \norm{[H_1,H_2]}\norm{[\Phi,\Psi,\phit_x]} 
	\lesssim \nu \delta^{-1/2}e^{-\alpha t} \norm{[\Phi,\Psi,\phit_x]}\lesssim \gamma_0 e^{-\alpha t} \norm{[\Phi,\Psi,\phit_x]}^2 +\nu e^{-\alpha t}.
\end{align*}
It follows from estimates of $q_1,\cdots,q_4$ and \eqref{eqn-L} that
\begin{align*}
	\abs{\textbf{I}_5} 
	&\lesssim\norm{\Psi}\|q_1\|+\|\Psi\|_{L^\infty}\|q_2\|_{L^1}+\norm{\p_x\left(\frac{\Psi}{L}\right)}\|q_3\|_{L^2}+\norm{\p_x\left(\frac{\Psi}{L}\right)}_{L^\infty}\|q_4\|_{L^1}\\
	&\lesssim \nu e^{-\alpha t}(\norm{\Psi}+\norm{\Psi_x})\norm{[\mt,\nt,\phit_x]}+\norm{\Psi}_{W^{1,\infty}}\norm{[\mt,\nt,\phit_x]}^2\\
	&\lesssim \nu e^{-\alpha t}\|\Psi\|^2+\nu e^{-\alpha t}\norm{\Psi_x}^2+(\nu e^{-\alpha t}+\varepsilon) \norm{[\mt,\nt,\phit_x]}^2,
\end{align*}
where we have used $\norm{\Psi}_{W^{1,\infty}}\lesssim \varepsilon$ which follows from Sobolev embedding $H^1(\mathbb{R})\hookrightarrow L^\infty(\R)$ and \eqref{eqn-assum-shock}. 

In view of  \eqref{eqn-L} and \eqref{eqn-anti-shock}, one has
\begin{align*}
	\abs{\textbf{I}_6} &\lesssim\norm{L_x}_\infty\norm{\phit_t}\norm{\phit_x}+\norm{L_t}_\infty(\norm{\phit_x}^2+\norm{\phit}^2)\lesssim\delta \norm{[\phit_x, \phit_t, \phit]}^2 ,
\end{align*}
and
\begin{align*}
	\abs{\textbf{I}_7} &\lesssim\norm{\phit}\left(\norm{L_x}_\infty\norm{[\Psi_x,H_1]}+\norm{\phit_t}\|\Phi\|+\norm{L_t}_\infty\norm{\Phi_x}\right)\lesssim\delta \norm{[\phit,\Phi_x,\Psi_x]}^2 +\nu e^{-\alpha t}.
\end{align*}
For $\textbf{I}_9,\textbf{I}_{10}$, we utilize Sobolev embedding, \eqref{eqn-assum-shock} and the Young inequality to get
\begin{align*}
	|\textbf{I}_9|&\lesssim (\nu e^{-\alpha t}+\norm{\phit}_{L^\infty})\norm{\phit_t}\norm{\phit}\lesssim(\nu e^{-\alpha t}+\varepsilon)\norm{[\phit,\phit_{t}]}^2 ,\\
	\abs{\textbf{I}_{10}}&\lesssim\nu e^{-\alpha t}\norm{\phit_t}\lesssim\nu e^{-\alpha t}\left(\norm{\phit_t}^2+1\right).
\end{align*}
Since the discriminant is $L^{-2}(1-L)<0$ for $L>1$ and $|L_x|\lesssim\delta^2$, we have
\begin{align*}
(\Phi,\Phi) +  \left(\phit_x,\frac{\phit_x}{L} \right)  &\lesssim \frac{1}{2} (\Phi,\Phi) + \frac{1}{2} \left(\phit_x,\frac{\phit_x}{L} \right) -  \left(\phit_x,\frac{\Phi}{L} \right),\\
	2\left(\phit,\frac{L_x}{L^2}\Phi\right)&\lesssim\delta^2\left(\phit,\phit\right)+\delta^2\left(\Phi,\Phi\right).
\end{align*} 
Substituting the above estimates of $\textbf{I}_i$ into \eqref{0th-shock-energy-1} and integrating in time $[0,\tau]$ yield that
\begin{align*}
	&\norm{[\Phi,\Psi,\phit,\phit_x]}^2(\tau)+\int_{0}^{\tau}\norm{[|\p_xu^s_\infty|^{1/2}\Psi,\Psi_x]}^2dt\\
	&\lesssim \norm{[\Phi_0,\Psi_0]}^2+\norm{\phit_0}_{H^1}^2+(\nu+\gamma_0) \sup_{t\in[0,\tau]}\norm{[\Phi,\Psi,\phit_x]}^2\\
	&\quad+(\delta+\varepsilon+\nu) \int_{0}^{\tau} \norm{ [\Phi_x,\Psi_x,\phit_t,\phit_x,\phit] }^2 \, dt +\nu,
\end{align*} 
provided $\delta_0$ is sufficiently small. Therefore, by taking supermum with respect to $\tau$ in $[0,T]$ and choosing $\nu_0,\gamma_0$ sufficiently small, we have proved \eqref{ineq-0th-shock}.
\end{proof}

%%%%%
\begin{lemma}
	\label{lem-1st-shock}
	Under the assumptions of Proposition \ref{prop-apriori-shock}, there exist positive constants $\delta_0, \gamma_0, \varepsilon_0$ independent of $T$ such that if $\delta<\delta_0, \nu<\gamma_0 \delta$, and $\varepsilon <\varepsilon_0$, then
	\begin{equation}
		\label{ineq-1st-shock}
		\begin{aligned}
			&\sup\limits_{t\in[0,T]} \norm{\Phi_x}^2+
			\int_{0}^{T} \left(\norm{\Phi_x}^2+ \norm{ \phit }_{H^2}^2 \right) \, dt  \\
			&\qquad\qquad
			\lesssim  \norm{\Psi_0}^2 + \norm{ \Phi_0}_{H^1}^2 +(\delta+\varepsilon+\nu) \int_{0}^{T} \norm{\phit_t}^2 \, dt +\nu .
		\end{aligned}
	\end{equation}
	and 
	\begin{equation}\label{ineq-2nd-shock-phit}
		\norm{\phit(t)}_{H^2}^2\lesssim\norm{\Phi_x(t)}^2+\nu e^{-\alpha t},\quad\forall t\in[0,T].
	\end{equation}
\end{lemma}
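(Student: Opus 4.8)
The plan is to first establish the static elliptic bound \eqref{ineq-2nd-shock-phit}, which controls $\phit$ by $\Phi_x=\nt$ at each fixed time, and then to run a Kawashima--Matsumura type computation on the anti-derivative momentum equation \subeqref{eqn-anti-shock}{2} to produce the density dissipation $\norm{\Phi_x}^2$ that is absent from the zeroth-order Lemma \ref{lem-0th-shock}. Once $\int_0^T\norm{\Phi_x}^2\,dt$ is available, \eqref{ineq-2nd-shock-phit} upgrades it to the full $\int_0^T\norm{\phit}_{H^2}^2\,dt$ on the left of \eqref{ineq-1st-shock}.

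For \eqref{ineq-2nd-shock-phit} I would read \subeqref{eqn-anti-shock}{3} as $(-\p_x^2+e^{-\phisinf})\phit=-\Phi_x-(h_3+q_5+q_6)$, a coercive elliptic identity since $\phisinf$ is bounded and hence $e^{-\phisinf}$ is bounded below. Pairing with $\phit$ gives $\norm{\phit}_{H^1}\lesssim\norm{\Phi_x}+\norm{h_3+q_5+q_6}$, and reading $\phit_{xx}=\Phi_x+e^{-\phisinf}\phit+h_3+q_5+q_6$ pointwise bounds $\norm{\phit_{xx}}$ by the same quantities. With $\norm{h_3}\lesssim\nu e^{-\alpha t}$ from Lemma \ref{h-estimates} and the smallness of $q_5,q_6$ in \eqref{qi-est}, absorbing the quadratic pieces yields \eqref{ineq-2nd-shock-phit}.

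For \eqref{ineq-1st-shock} I would test \subeqref{eqn-anti-shock}{2} against $\Phi_x$. The pressure term gives $\int L\abs{\Phi_x}^2$; the time derivative becomes $\int\Psi_t\Phi_x=\frac{d}{dt}(\Psi,\Phi_x)+(\Psi_x,H_1)-\norm{\Psi_x}^2$ after using $\Phi_{xt}=H_{1,x}-\Psi_{xx}$ from \subeqref{eqn-anti-shock}{1}; and the viscous term, integrated by parts and combined with $\Psi_{xx}=H_{1,x}-\Phi_{xt}$, yields the genuine energy $\tfrac12\frac{d}{dt}\int\frac{\abs{\Phi_x}^2}{\nsinf}$ up to terms carrying $\p_x\nsinf=O(\delta^2)$ or $H_{1,x}=h_1$. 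The decisive step is the self-consistent field contribution $-\int(\p_x\phisinf\,\phit_x+\phit_{xx})\Phi_x$: substituting the Poisson identity $\phit_{xx}=\Phi_x+e^{-\phisinf}\phit+(\text{err})$ turns $-\int\phit_{xx}\Phi_x$ into $-\norm{\Phi_x}^2-\int e^{-\phisinf}\phit\,\nt+(\text{err})$, and a second substitution rewrites $-\int e^{-\phisinf}\phit\,\nt=\int e^{-\phisinf}\phit_x^2+\int e^{-2\phisinf}\phit^2+(\text{small})+(\text{err})$, which is nonnegative up to the lower-order pieces. Hence the pressure and the electric field combine into $\int(L-1)\abs{\Phi_x}^2+c_0\norm{\phit}_{H^1}^2$ with $L-1=A-\abs{\usinf}^2\ge A/2$ by \eqref{usinf-bdd}: the field merely lowers the effective sound coefficient from $A+1$ to $A$ and, as a bonus, supplies positive $\phit$-dissipation. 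The functional $(\Psi,\Phi_x)+\tfrac12\int\frac{\abs{\Phi_x}^2}{\nsinf}$ then controls $\sup_t\norm{\Phi_x}^2$ (after absorbing $(\Psi,\Phi_x)$ against the zeroth-order quantity $\norm{\Psi}^2$), while the dissipation controls $\int_0^T(\norm{\Phi_x}^2+\norm{\phit}_{H^1}^2)\,dt$, the top order $\int_0^T\norm{\phit_{xx}}^2\,dt$ coming from \eqref{ineq-2nd-shock-phit}.

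The residual terms are harmless: $H_1,H_2$ are absorbed via \eqref{h-esti-eq2} and Young's inequality (the bound $\nu<\gamma_0\delta$ makes $\nu^2\delta^{-1}e^{-2\alpha t}$ contribute only $\nu$), the drift $2\usinf\Psi_x\Phi_x$ and all coefficient derivatives are $O(\delta)$ by \eqref{usinf-bdd} and \eqref{eqn-shock-deriv}, and the $q_i$ are quadratic and small by \eqref{qi-est}. I expect the main obstacle to be exactly the bookkeeping of the Poisson coupling: one must check that, after the two substitutions, the field enters with the favorable sign and does not swallow the density dissipation, which is guaranteed only because the quasineutral structure keeps $L-1=A-\abs{\usinf}^2$ bounded below. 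Finally, the $-\norm{\Psi_x}^2=-\norm{\mt}^2$ produced by the interaction functional and the $O(\delta)\norm{\Psi_x}^2$ drift terms do not appear on the right of \eqref{ineq-1st-shock}; they are absorbed by the momentum dissipation $\int_0^T\norm{\Psi_x}^2\,dt$ supplied by Lemma \ref{lem-0th-shock} once the two estimates are combined in Proposition \ref{prop-apriori-shock}, and the small multiples of $\int_0^T\norm{\phit_t}^2\,dt$ kept on the right arise, as in that lemma, when the coupling is organized through time-differentiated functionals, to be closed later by a separate $\phit_t$-estimate.
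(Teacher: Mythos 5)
Your proposal is correct and is essentially the paper's own proof: both test the anti-derivative momentum equation \subeqref{eqn-anti-shock}{2} against $\Phi_x$, work with the Matsumura--Nishihara functional $(\Psi,\Phi_x)+\tfrac12\big(\frac{1}{\nsinf}\Phi_x,\Phi_x\big)$ (handling $(\Psi_t,\Phi_x)$ and the viscous term exactly as you describe, via \subeqref{eqn-anti-shock}{1}), use the Poisson equation so that the pressure and electric-field contributions combine into $(L-1)\norm{\Phi_x}^2$ plus $\phit$-dissipation with coercivity coming from $L-1=A-\abs{\usinf}^2>0$, obtain \eqref{ineq-2nd-shock-phit} by testing \subeqref{eqn-anti-shock}{3} with $\phit$ and reading off $\p_x^2\phit$ from the equation, and close by invoking Lemma \ref{lem-0th-shock} to absorb $\int_0^T\norm{\Psi_x}^2\,dt$ and $\sup_t\norm{\Psi}^2$, which is also the source of the $(\delta+\varepsilon+\nu)\int_0^T\norm{\phit_t}^2\,dt$ term on the right. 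The only difference is organizational: the paper adds the Poisson equation tested against $\p_x^2\phit$, producing the quadratic form $(L\Phi_x,\Phi_x)-2(\p_x^2\phit,\Phi_x)+\norm{\p_x^2\phit}^2=\norm{\p_x^2\phit-\Phi_x}^2+((L-1)\Phi_x,\Phi_x)$ and hence the $\norm{\p_x^2\phit}^2$ dissipation directly, whereas you substitute the Poisson identity into the cross term twice and recover $\int_0^T\norm{\p_x^2\phit}^2\,dt$ afterwards from \eqref{ineq-2nd-shock-phit}; the two computations coincide up to integration by parts.
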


\begin{proof}
%It follows from \subeqref{eqn-anti-shock}{1} that 
%\begin{align*}
%	&\p_t \Psi \p_x \Phi= \p_t (\Psi \p_x \Phi) +\p_x (\Psi \p_x \Psi)-\Psi \p_x H_1 -\abs{\p_x \Psi}^2,\\
%	&-\p_x \Big( \frac{1}{\nsinf} (\p_x \Psi-\usinf \p_x \Phi) \Big) \p_x \Phi\\
%	&\qquad =\frac{d}{dt} (\frac{1}{2\nsinf} \abs{\p_x \Phi}^2 )-\frac{1}{\nsinf} \p_x H_1 \p_x \Phi +\frac{\p_x \nsinf}{\abs{\nsinf}^2 }\p_x \Psi \p_x \Phi\\
%	&\qquad \quad-\frac{1}{2} ( \p_t (\frac{1}{\nsinf}) -\p_x (\frac{\usinf}{\nsinf}) ) \abs{\p_x \Phi}^2 +\p_x (\frac{\usinf}{2\nsinf} \abs{\p_x \Phi}^2),
%\end{align*}
%
Multiplying \subeqref{eqn-anti-shock}{2} by $\Phi_x$, and integrating the resulting equations with respect to $x$ over $\R$ give that 
\begin{align*}
	&\left(\Psi_t,\Phi_x\right)+(L \Phi_x, \Phi_x)-\left(\p_x \Big( \frac{1}{\nsinf} (\Psi_x-\usinf \Phi_x) \Big),\Phi_x\right)-\left(\p_x^2 \phit, \Phi_x\right)\\
	&\quad=-\left(2\usinf\Psi_x,\Phi_x\right)+\left(\p_x \phisinf \p_x \phit, \Phi_x \right)-\left(\p_x H_2, \Phi\right)+\left(q_1+q_2, \Phi_x\right)-\left(q_3+q_4, \p_x^2 \Phi\right).
\end{align*}
It follows from \subeqref{eqn-anti-shock}{1} that 
\begin{align*}
	\left(\Psi_t,\Phi_x\right)=\frac{d}{dt}\left(\Psi,\Phi_x\right)-\left(\Psi,\p_t\Phi_x\right)=\frac{d}{dt}\left(\Psi,\Phi_x\right)-\left(\Psi,\p_x H_1\right)-\left(\Psi_x,\Psi_x\right)
\end{align*}
and
\begin{align*}
	&-\left(\p_x \Big( \frac{1}{\nsinf} (\Psi_x-\usinf \Phi_x) \Big),\Phi_x\right)=-\left(\frac{\p_x^2\Psi}{\nsinf} ,\Phi_x\right)-\left(\p_x\left(\frac{1}{\nsinf}\right)\Psi_x,\Phi_x\right)-\left(\frac{\usinf}{\nsinf}\Phi_x,\p_x^2\Phi \right)\\
	&=\left(\frac{\p_t\Phi_x}{\nsinf} ,\Phi_x\right)-\left(\frac{\p_xH_1}{\nsinf},\Phi_x\right)-\left(\p_x\left(\frac{1}{\nsinf}\right)\Psi_x,\Phi_x\right)+\left(\frac{1}{2}\p_x\left(\frac{\usinf}{\nsinf}\right)\Phi_x,\Phi_x \right)\\
	&=\frac{1}{2}\frac{d}{dt}\left(\frac{1}{\nsinf}\Phi_x,\Phi_x\right)-\left(\frac{1}{2}\left(\p_t \left(\frac{1}{\nsinf} \right)-\p_x \left(\frac{\usinf}{\nsinf} \right) \right) \Phi_x  , \Phi_x\right)-\left(\frac{\p_xH_1}{\nsinf},\Phi_x\right)-\left(\frac{\p_x\nsinf}{\abs{\nsinf}^2}\Psi_x,\Phi_x\right) .
\end{align*}
Taking the inner product between \subeqref{eqn-anti-shock}{3} and $\p_x^2\phit$ and noticing $\Phi_x=\nt$, we obtain that
\begin{align}\label{2nd-shock-disp-phit}
	(\p_x^2\phit,\p_x^2\phit)-(\Phi_x,\p_x^2\phit_x)+\left(e^{-\phisinf}\phit_x,\phit_x\right)=(e^{-\phisinf}\p_x\phisinf\phit,\phit_x)+(q_5+q_6,\p_x^2\phit).
\end{align}
%\begin{align*}
%	(L \Phi_x, \Phi_x)-\left(\p_x^2 \phit, \Phi_x\right)&=((L-1)\p_x^2\phit,\p_x^2\phit)+((2L-1)\p_x^2\phit,e^{-\phisinf}\phit-q_5-q_6)\\
%	&\quad+(e^{-\phisinf}\phit-q_5-q_6,e^{-\phisinf}\phit-q_5-q_6)
%\end{align*}
As a consequence of the above equations, we have
\begin{align}\label{1st-shock-energy}
	& \frac{1}{2} \frac{d}{d t}\left(\frac{1}{\nsinf}\Phi_x, \Phi_x\right)+\frac{1}{2} \frac{d}{d t}\left(\Psi, \Phi_x\right)+(L\Phi_x,\Phi_x)-2\left(\p_x^2\phit,\Phi_x\right)+(\p_x^2\phit,\p_x^2\phit)+\left(e^{-\phisinf}\phit_x,\phit_x\right)\nonumber\\
	& \quad
	=\underbrace{
	\left( (-2\usinf+\frac{\p_x \nsinf}{\abs{\nsinf}^2 }  ) \Psi_x + \frac{1}{2} (\p_t (\frac{1}{\nsinf} )-\p_x (\frac{\usinf}{\nsinf} ) ) \Phi_x  , \Phi_x\right)
	+\left( \Psi_x , \Psi_x\right)
    +\left(\p_x \phisinf\phit_x, \Phi_x \right)}_{\textbf{J}_1}\nonumber \\
	& \quad  \quad
	+\underbrace{\left(\p_x H_1, \Psi+ \frac{1}{\nsinf}\Phi_x\right)-\left(\p_x H_2, \Phi\right)}_{\textbf{J}_2}
	+\underbrace{\left(q_1+q_2,\Phi_x\right)-\left(q_3+q_4, \p_x^2 \Phi\right)}_{\textbf{J}_3} \\
	&\qquad+\underbrace{(e^{-\phisinf}\p_x\phisinf\phit,\phit_x)+(h_3+q_5+q_6,\p_x^2\phit)}_{\textbf{J}_4}.\nonumber
\end{align}
The terms $\textbf{J}_1,\cdots,\textbf{J}_4$ can be bounded as follows. By using \eqref{eqn-shock-deriv} and the Young inequality, one has
\begin{align*}
	\abs{\textbf{J}_1}&\lesssim\delta\norm{[\p_x\Phi,\p_x\phit]}^2+\norm{\p_x\Psi}^2.
\end{align*}
It follows from the estimates of $H_1, H_2$ and the Young inequality that
\begin{align*}
	\abs{\textbf{J}_2}&\lesssim\norm{[h_1,h_2]}(\norm{\Phi}_{H^1}+\norm{\Psi})\lesssim\nu\delta^\frac{1}{2}e^{-\alpha t}(\norm{\Phi}_{H^1}+\norm{\Psi})\lesssim\nu\delta e^{-\alpha t}(\norm{\Phi}_{H^1}^2+\norm{\Psi}^2)+\nu e^{-\alpha t}.
\end{align*}
By using estimates of $q_i$, Sobolev embedding, \eqref{eqn-assum-shock} and the Young inequality, we have
\begin{align*}
	\abs{\textbf{J}_3}&\lesssim\|q_1\|\|\Phi_x\|+\|q_2\|_{L^1}\|\Phi_x\|_{L^\infty}+(\|q_3\|+\|q_4\|)\|\p_x^2\Phi\|\\
	&\lesssim\nu e^{-\alpha t}\norm{[\mt,\nt,\phit_x]}\|\Phi_x\|_{H^1}+\norm{[\mt,\nt,\phit_x]}^2\norm{\Phi_x}_{H^1}+\|[\mt,\nt]\|_{L^\infty}\|[\mt,\nt]\|\|\p_x^2\Phi\|\\
	&\lesssim\varepsilon\left(\nu e^{-\alpha t}+\norm{[\Phi_x,\Psi_x,\phit]}_{H^1}^2\right), 
\end{align*}
and
\begin{align*}
	\abs{\textbf{J}_4}&\lesssim|\p_x\phisinf|_{L^\infty}\norm{\phit}\norm{\phit_x}+\left(\nu e^{-\alpha t}+\nu e^{-\alpha t}\norm{\phit}+\norm{\phit}_{L^\infty}\norm{\phit}\right)\norm{\p_x^2\phit}\\
	&\lesssim\delta^2\norm{\phit}\norm{\phit_x}+\varepsilon\left(\nu e^{-\alpha t}\norm{\phit}+\norm{\phit}_{L^\infty}\norm{\phit}\right)\\
	&\lesssim(\delta+\varepsilon)\norm{\phit}_{H^1}^2+\nu e^{-\alpha t}.
\end{align*}
Noting that
\begin{align*}
	(L\Phi_x,\Phi_x)-2\left(\p_x^2\phit,\Phi_x\right)+(\p_x^2\phit,\p_x^2\phit)\gtrsim \norm{\Phi_x}^2+\norm{\p_x^2\phit}^2\\
	\left(\frac{1}{\nsinf}\Phi_x,\Phi_x\right)+(\Psi,\Phi_x)\gtrsim \norm{\Phi_x}^2-\norm{\Psi},
\end{align*}
it follows from plugging estimates of $J_i$ into \eqref{1st-shock-energy} and integrating with respect to time in $[0,\tau]$ that 
\begin{equation}\label{1st-shock-energy-Phi}
	\begin{aligned}
	&\norm{\Phi_x}(\tau)+\int_{0}^{\tau}\norm{\Phi_x}^2+\norm{\p_x^2\phit}^2dt\\
	&\lesssim \norm{\Phi_{0x}}^2+\|\Psi_0\|^2+ \nu\delta\sup_{t\in[0,\tau]}(\norm{\Phi}^2+\norm{\Psi}^2)+(\delta+\varepsilon)\int_{0}^{\tau}\norm{[\Phi_x,\Psi_x,\phit]}_{H^1}^2dt+\nu. 
\end{aligned}
\end{equation}
Next, taking the $L^2$ inner product between \subeqref{eqn-anti-shock}{3} and $\phit$, and integration by parts yield
\begin{align}\label{1st-shock-energy-phit}
	\norm{\phit}_{H^1}^2\lesssim\norm{\p_x\Phi}^2+\norm{[q_5,q_6]}^2\lesssim\norm{\p_x\Phi}^2+\varepsilon\norm{\phit}_{H^1}^2+\nu e^{-\alpha t} \, .
\end{align}
Multiplying \eqref{1st-shock-energy-phit} with suitable constant and adding to \eqref{1st-shock-energy-Phi}, using \eqref{0th-shock-energy} and choosing $\delta_0$ further sufficiently small, we have proved \eqref{ineq-1st-shock}. In view of \eqref{2nd-shock-disp-phit}, \eqref{1st-shock-energy-phit} and estimate of $J_4$, we can get \eqref{ineq-2nd-shock-phit}.
\end{proof}

\begin{lemma}
	\label{lem-2nd-shock}
Under the assumptions of Proposition \ref{prop-apriori-shock}, there exist positive constants $\delta_0, \gamma_0, \varepsilon_0$ independent of $T$ such that if $\delta<\delta_0, \nu<\gamma_0 \delta$, and $\varepsilon <\varepsilon_0$, then
\begin{equation}
	\label{ineq-2nd-shock}
	\begin{aligned}
		&\sup\limits_{t\in[0,T]} (\norm{\p_x^2 \Phi}^2+\norm{\p_x^2 \Psi}^2)+
		\int_{0}^{T} \left(\norm{\p_x \Phi}_{H^1}^2+ \norm{ \phit }_{H^2}^2+\norm{ \p_t \phit}_{H^1}^2\right) \, dt
		\lesssim  \norm{[\Phi_0,\Psi_0]}_{H^2}^2+\nu.
	\end{aligned}
\end{equation}	
\end{lemma}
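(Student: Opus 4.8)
The plan is to carry out the top-order ($H^2$-level) energy estimates on the anti-derivative system \eqref{eqn-anti-shock}, in direct analogy with the lower-order Lemmas \ref{lem-0th-shock} and \ref{lem-1st-shock}, and then to recover the potential $\phit$ and its time derivative $\phit_t$ from the elliptic Poisson equation \subeqref{eqn-anti-shock}{3}. Since $\Phi_x=\nt$ and $\Psi_x=\mt$, bounding $\sup_t\norm{\p_x^2[\Phi,\Psi]}^2$ is nothing but the first-order estimate of the physical perturbation, so the mechanisms of the previous two lemmas should simply repeat one derivative higher.

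First I would apply $\p_x^2$ to \subeqref{eqn-anti-shock}{1} and \subeqref{eqn-anti-shock}{2} and pair them respectively with $\p_x^2\Phi$ and $L^{-1}\p_x^2\Psi$, mirroring the zeroth-order estimate. After integration by parts the transport/cross terms arising from $\p_x^2\Psi_x$ and from $\p_x^2(L\Phi_x)$ cancel (up to lower-order factors where derivatives fall on $L$ or $\nsinf$), the weighted time-derivative contributions produce $\tfrac{d}{dt}\norm{\p_x^2\Phi}^2$ and $\tfrac{d}{dt}(L^{-1}\p_x^2\Psi,\p_x^2\Psi)$ --- which, after integrating in $t$, yield the desired $\sup_t\norm{\p_x^2[\Phi,\Psi]}^2$ --- and the viscous operator $\p_x(\nsinf^{-1}\p_x\Psi)$ furnishes the top-order parabolic dissipation $\int\norm{\p_x^3\Psi}^2$, while the shock factor (analogous to $L_1$ in Lemma \ref{lem-0th-shock}) contributes the good term $\int\norm{\,\abs{\p_x\usinf}^{1/2}\p_x^2\Psi\,}^2$. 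This estimate delivers $\sup_t\norm{\p_x^2\Psi}^2$, which is available at no lower order. To create in addition the coercive dissipation $\int\norm{\p_x^2\Phi}^2$ that the transport structure does not supply by itself, I would pair $\p_x$ of \subeqref{eqn-anti-shock}{2} with $\p_x^2\Phi$, exactly as $\Phi_x$ was used in Lemma \ref{lem-1st-shock}: the resulting term $(L\,\p_x^2\Phi,\p_x^2\Phi)$ is coercive because $L>1$ by \eqref{eqn-L}.

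For the potential I would treat \subeqref{eqn-anti-shock}{3} as an elliptic equation whose zeroth-order coefficient $e^{-\phisinf}$ is strictly positive: elliptic regularity then bounds $\norm{\phit}_{H^2}$ by $\norm{\Phi_x}$ plus the errors $h_3,q_5,q_6$, which is precisely \eqref{ineq-2nd-shock-phit}. Differentiating the Poisson equation in time and using $\p_t\Phi_x=-\p_x^2\Psi+h_1$ from \subeqref{eqn-anti-shock}{1}, the same elliptic bound controls $\norm{\phit_t}_{H^1}$ by $\norm{\p_x^2\Psi}$ together with lower-order and error contributions; this is what makes the term $\int\norm{\phit_t}_{H^1}^2$ in \eqref{ineq-2nd-shock} admissible, its cost being paid by the parabolic dissipation gained above. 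Throughout, the error terms $h_i,H_i$ are estimated by Lemma \ref{h-estimates} and the nonlinear terms $q_1,\dots,q_6$ by the pointwise bounds \eqref{qi-est}, combined with Sobolev embedding and the a priori smallness \eqref{eqn-assum-shock}; every borderline term carries a smallness factor $\delta$, $\nu e^{-\alpha t}$ or $\varepsilon$, so it can be absorbed either into the left-hand side or into the dissipation already secured by Lemmas \ref{lem-0th-shock} and \ref{lem-1st-shock}. Summing the three estimates with suitable positive weights and integrating over $[0,T]$ then closes \eqref{ineq-2nd-shock}.

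The main obstacle is the \emph{top-order coupling to the self-consistent electric field}: the momentum equation carries $\p_x^2\phit$, so its second derivative produces $\p_x^4\phit$, which is nominally of the same order as the worst dissipative terms. The resolution is that these potential derivatives are never free --- they are slaved to $\nt=\Phi_x$ through the Poisson equation \subeqref{eqn-anti-shock}{3}. Concretely, after one integration by parts the dangerous contribution reduces to $\p_x^3\phit$ paired against $\p_x^3\Psi$, and differentiating \subeqref{eqn-anti-shock}{3} shows $\p_x^3\phit=\p_x^2\Phi+e^{-\phisinf}\p_x\phit+(\text{errors})$, converting it into the already-controlled quantities $\p_x^2\Phi$ and $\p_x\phit$ multiplied by the dissipative factor $\p_x^3\Psi$, which is then handled by Young's inequality. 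Performing this conversion at each order \emph{without losing a derivative}, and arranging the weights so that the residual top-order cross terms between the parabolic dissipation and the elliptic $\phit$-estimate cancel or are absorbed, is the delicate part; the remaining manipulations are routine Cauchy--Schwarz/Young bookkeeping of the kind already displayed in Lemmas \ref{lem-0th-shock} and \ref{lem-1st-shock}.
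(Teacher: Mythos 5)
Your plan breaks down at its very first step, and the breakdown is precisely why the paper abandons the anti-derivative variables for this lemma. The viscous operator in \subeqref{eqn-anti-shock}{2} is not $\p_x\big(\nsinf^{-1}\p_x\Psi\big)$, as you write, but $\p_x\big(\nsinf^{-1}(\p_x\Psi-\usinf\p_x\Phi)\big)$: in Eulerian coordinates the viscosity $\p_x^2(m/n)$ couples \emph{both} conservative variables. When you apply $\p_x^2$ to the momentum equation and pair with $L^{-1}\p_x^2\Psi$, one integration by parts produces, besides the dissipation $\int\nsinf^{-1}L^{-1}\abs{\p_x^3\Psi}^2\,dx$, the cross term $\int \frac{\usinf}{\nsinf L}\,\p_x^3\Phi\,\p_x^3\Psi\,dx$. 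The smallness $\abs{\usinf}\lesssim\delta$ does not help, because $\norm{\p_x^3\Phi}=\norm{\p_x^2\nt}$ is controlled by nothing in your scheme: the a priori assumption \eqref{eqn-assum-shock} is only $H^2$, the dissipations you generate are $\int\norm{\p_x^3\Psi}^2$ and $\int\norm{\p_x^2\Phi}^2$, and the Poisson equation slaves $\phit$ to $\Phi_x$, so elliptic regularity can never return a derivative of $\Phi$ beyond what is already known (the relation $\p_x^4\phit\sim\p_x^3\Phi$ is circular). Integrating by parts the other way gives $\p_x^2\Phi\,\p_x^4\Psi$, equally out of reach, and substituting $\p_x^3\Psi=\p_x h_1-\p_t\p_x^2\Phi$ from the first equation leads back to the same term. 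The same over-differentiation occurs in the nonlinear fluxes: the equation carries $\p_x(q_3+q_4)$ with $q_4=-\nt\ut/n^\sha$, so after $\p_x^2$ and pairing you must estimate $\norm{\p_x^2 q_4}$, which contains $\ut\,\p_x^2\nt=\ut\,\p_x^3\Phi$; this also contaminates your auxiliary pairing of $\p_x$\subeqref{eqn-anti-shock}{2} with $\p_x^2\Phi$. At the lower orders this cross structure is harmless (the term $\textbf{I}_2$ in Lemma \ref{lem-0th-shock} needs only $\int\norm{\Phi_x}^2$, which Lemma \ref{lem-1st-shock} supplies), but at the top order each estimate demands one more derivative of $\Phi$ than the next estimate can provide: the hierarchy in the anti-derivative variables does not close. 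Notably, the obstacle you single out as ``the delicate part''---the electric-field coupling $\p_x^4\phit$---is the part that does work; the fatal terms are the viscous and nonlinear cross terms above, which your proposal never mentions.

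The paper's proof of Lemma \ref{lem-2nd-shock} avoids all of this by switching to physical variables: it sets $\ut=u-u^\sha$ and rewrites the system as \eqref{eqn-pert-shock-new}, in which the viscosity acts \emph{diagonally} as $\p_x^2\ut$. It then performs (i) a relative-entropy estimate pairing the equations with $\Lambda(n,n^\sha)$ and $\ut$, (ii) the estimate from pairing the momentum equation with $-\p_x^2\ut/n$, yielding $\sup_t\norm{\ut_x}^2+\int\norm{\p_x^2\ut}^2\,dt$, (iii) the $\nt_x$-estimate from $\p_x$ of the continuity equation paired with $\nt_x/n^2$, with the troublesome term $(\tfrac{1}{n}\p_x^2\ut,\nt_x)$ eliminated by the cross pairing of the momentum equation with $\nt_x/n$, and (iv) the $\p_t$-Poisson estimate giving $\int\norm{\p_t\phit}^2\,dt$. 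There the highest derivatives that ever appear, $\p_x\nt$ and $\p_x^2\ut$, are exactly the ones controlled, so the scheme closes; the stated bounds then follow because $\p_x^2\Phi=\p_x\nt$ and $\p_x^2\Psi=\p_x\mt=\p_x(n\ut+u^\sha\nt)$ are controlled by $\norm{[\nt,\ut]}_{H^1}$. To repair your argument you would have to adopt this change of variables (or some equivalent formulation in which the top-order viscosity is diagonal); within the anti-derivative framework the estimate \eqref{ineq-2nd-shock} cannot be obtained as you describe.
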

\begin{proof}
	To derive higher order energy estimates on $[\Phi,\Psi]$, we first introduce $\ut=u-u^\sha$ and rewrite \eqref{eqn-pert-shock} into
	\begin{equation}
		\label{eqn-pert-shock-new}
		\begin{cases}
			\nt_t+n\ut_x+u\nt_x+ u^\sha_x\nt+n^\sha_x\ut=h_1 \,, \\
			n\ut_t+nu\ut_x+A\nt_x+n\ut u^\sha_x+\nt(u^\sha_t+u^\sha u^\sha_x)-\p_x^2\ut-(n\phi_x-n^\sha\phi^\sha_x)=h_2-h_1u^\sha \,,\\
			\p_x^2\phit-\nt + (e^{-\phi} -e^{-\phi^\sha})=h_3 \,.
		\end{cases}
	\end{equation} 
Set $$\Lambda(n,n^\sha)=A\int^{n}_{n^\sha}\frac{s-n^\sha}{s^2}ds=A\left(\ln\left(\frac{n}{n^\sha}\right)+\frac{n^\sha}{n}-1\right).$$
	Multiplying \subeqref{NSP-Ion}{1} and \subeqref{eqn-pert-shock-new}{2} by $\Lambda(n,n^\sha)$ and $\ut$ respectively, integrating the resulting equation with respect to $x$ over $\R$ give that 
	\begin{align*}
		\frac{d}{dt}\left\{\frac{1}{2}\left(n\ut,\ut\right)+\left(n,\Lambda\right)\right\}+\norm{\ut_x}^2
		=&-\left(n\ut u^\sha_x+\nt(u^\sha_t+u^\sha u^\sha_x),\ut\right)+\left(n\phi_x-n^\sha\phi^\sha_x,\ut\right)\\
		&+\left(h_2-h_1u^\sha,\ut\right) .
	\end{align*}
To estimate terms on the right hand side of the above equation, we first obtain from \eqref{nsha-bound}, \eqref{usha-u-bdd} and \eqref{usinf-bdd} that
\begin{align}
	&\norm{u^\sha}_{L^\infty}\lesssim\norm{\usinf}_{L^\infty}+\nu e^{-\alpha t}\lesssim\delta+\nu e^{-\alpha t},\label{usha-bound}\\
	&\norm{[n^\sha_x,u^\sha_x,\phi^\sha_x]}_{L^\infty}+\norm{\p_tu^\sha}_{L^\infty}\lesssim\delta+\nu e^{-\alpha t}.\label{n-usha-1st-bound}
\end{align}
%\begin{align*}
%	n^\sha&=\nsinf+(n^s_\X-\nsinf)+\rho^-(1-\sigma_{\X})+\rho^+\sigma_{\X},\\
%	m^\sha&=\msinf+(m^s_\Y-\msinf)+w^-(1-\sigma_{\Y})+w^+\sigma_\Y,\\
%	\phi^\sha&=\phisinf+(\phi^s_\X-\phisinf)+\varphi(1-\sigma_\X)+\varphi^+\sigma_{\X}
%\end{align*}
%Then, by using Lemmas \ref{Lem-periodic}, \ref{Lem-shift} and \ref{lem-shock-prop}, we have that
%\begin{align}
%	&[\p_tu^\sha,\p_xu^\sha]=[\p_t\usinf,\p_x\usinf]+\mathcal{O}(1)\nu e^{-\alpha t}\lesssim\delta+\nu e^{-\alpha t}\\
%	&\p_x\phi^\sha=\p_x\phisinf+\mathcal{O}(1)\nu e^{-\alpha t}\lesssim\delta+\nu e^{-\alpha t}.
%\end{align}
As a consequence, it follows from the Young inequality and estimates of $h_1,h_2$ that 
\begin{align}\label{u-0th-shock-energy}
	\frac{d}{dt}\left\{\frac{1}{2}\left(n\ut,\ut\right)+\left(n,\Lambda\right)\right\}+\norm{\ut_x}^2&\lesssim(\delta+\nu e^{-\alpha t})\norm{[\ut,\nt]}^2+\norm{\phit_x}^2+\norm{[h_1,h_2]}\norm{\ut},\nonumber\\
	&\lesssim(\delta+\nu e^{-\alpha t})\norm{[\ut,\nt]}^2+\norm{\phit_x}^2+\delta^\frac{1}{2}\varepsilon\nu e^{-\alpha t}.
\end{align}
%Integrating the above inequality with respect to time in $[0,\tau]$ and using \eqref{1st-shock-energy-phit}, \eqref{ineq-1st-shock}, we have that
%\begin{align}
%	\sup_{t\in[0,T]}\norm{\ut}^2+\int_{0}^{T}\norm{\p_x\ut}^2dt\lesssim\norm{ [\Phi_0,\Psi_0, \phit_0]}_{H^1}^2 +(\delta+\varepsilon+\nu) \int_{0}^{T} \norm{ \p_t \phit}^2 \, dt +\nu,
%\end{align}
%if $\delta\leq \delta_0$, $\nu\leq\gamma_0\delta$, $\varepsilon<\varepsilon_0$ with $\delta_0,\gamma_0,\varepsilon_0$ chosen further sufficiently small.
Similarly, taking inner product between \subeqref{eqn-pert-shock-new}{2} and $-\p_x^2\ut/n$ gives that
\begin{equation*}
	\begin{aligned}
		\frac{1}{2}\frac{d}{dt}\left(\ut_x,\ut_x\right)+\left(\frac{\p_x^2\ut}{n},\p_x^2\ut\right)&=\left(u\ut_x+\frac{A}{n}\nt_x+\ut u^\sha_x+\frac{\nt}{n}(u^\sha_t+u^\sha u^\sha_x),\p_x^2\ut\right)\\
		&\quad-\left(\frac{1}{n}(n\phi_x-n^\sha\phi^\sha_x),\p_x^2\ut\right)-\left(\frac{1}{n}(h_2-h_1u^\sha),\p_x^2\ut\right)\\
		&\lesssim\mu_1\norm{\p_x^2\ut}^2+\frac{1}{\mu_1}(\norm{[\ut_x,\nt_x,\phit_x]}^2+\delta\norm{\ut,\nt}^2+\nu e^{-\alpha t}),
	\end{aligned}
\end{equation*}
which implies that
\begin{align}\label{u-1st-shock-energy}
	\frac{d}{dt}\norm{\ut_x}^2+\norm{\p_x^2\ut}^2\lesssim \norm{[\ut_x,\nt_x,\phit_x]}^2+\delta\norm{\ut,\nt}^2+\nu e^{-\alpha t}.
\end{align}
%\begin{align*}
%	&\frac{1}{2}\frac{d}{dt}\left(\ut_x,\ut_x\right)+\left(\frac{\p_x^2\ut}{n},\p_x^2\ut\right)\\
%	&=\underbrace{\left(u\p_x\ut+\frac{A}{n}\p_x\nt+\ut\p_xu^\sha+\frac{\nt}{n}(\p_tu^\sha+u^\sha\p_xu^\sha),\p_x^2\ut\right)-\left(\frac{1}{n}(h_2-h_1u^\sha),\p_x^2\ut\right)}_{\textbf{I}_3}\\
%	&\quad-\underbrace{\left(\frac{1}{n}(n\p_x\phi-n^\sha\p_x\phi^\sha),\p_x^2\ut\right)}_{\textbf{I}_4}
%\end{align*}
%\begin{align*}
%	\abs{\textbf{I}_3}&\lesssim\eta\norm{\p_x^2\ut}^2+\frac{1}{\eta}(\norm{\p_x\ut,\p_x\nt}^2+\delta\norm{\ut,\nt}^2+\nu e^{-\alpha t})\\
%	\abs{\textbf{I}_4}&\lesssim\eta\norm{\p_x^2\ut}^2+\frac{1}{\eta}\norm{[\nt,\p_x\phit]}^2
%\end{align*}
%As a result, we have that
%\begin{align}
%	\sup_{t\in[0,T]}\norm{\p_x\ut}+\int_{0}^{T}\norm{\p_x^2\ut}^2dt\lesssim\norm{ [\Phi_0, \phit_0]}_{H^1}^2 +\norm{\Psi_0}_{H^2}^2+(\delta+\varepsilon) \int_{0}^{T} \norm{ \p_t \phit}^2 \, dt +\nu,
%\end{align}
%if $\delta\leq \delta_0$, $\nu\leq\gamma_0\delta$, $\varepsilon<\varepsilon_0$ with $\delta_0,\gamma_0,\varepsilon_0$sufficiently small.

Next, taking the derivative $\p_x$ on \subeqref{eqn-pert-shock-new}{1} and inner product with $\p_x\nt/n^2$ yield that
\begin{align}\label{2nd-shock-energy-Phi}
	\frac{1}{2}\frac{d}{dt}\left(\frac{\nt_x}{n^2},\nt_x\right)+\left(\frac{1}{n}\p_x^2\ut,\nt_x\right)=&-\left(\frac{\nt_x}{n^2},2n^\sha_x\ut_x+\frac{1}{2}u^\sha_x\nt_x+\p_x^2u^\sha\nt+\p_x^2n^\sha\ut\right)\nonumber\\
	&-\left(\frac{\ut_x}{2n^2}\nt_x,\nt_x\right)+\left(\frac{\nt_x}{n^2},\p_xh_1\right),
\end{align}
where we have used
\begin{align*}
	&\left(-\left(\frac{1}{2n^2}\right)_t\nt_x+\frac{u}{n^2}\p_x^2\nt+\frac{u_x}{n^2}\nt_x,\nt_x\right)=\left(\frac{n_t}{n^3}+\frac{un_x}{n^3}+\frac{u_x}{2n^2},\nt_x\right)\\
	&=-\left(\frac{u_x}{2n^2}\nt_x,\nt_x\right)=-\left(\frac{\ut_x}{2n^2}\nt_x,\nt_x\right)-\left(\frac{u^\sha_x}{2n^2}\nt_x,\nt_x\right),
\end{align*}
and
\begin{align*}
	\left(\frac{1}{n^2}n_x\ut_x,\nt_x\right)=\left(\frac{1}{n^2}\nt_x\ut_x,\nt_x\right)+\left(\frac{1}{n^2}n^\sha_x\ut_x,\nt_x\right).
\end{align*}
%\begin{align*}
%	&\frac{1}{2}\frac{d}{dt}\left(\frac{\p_x\nt}{n^2},\p_x\nt\right)-\left(\p_t\left(\frac{1}{2n^2}\right)\p_x\nt,\p_x\nt\right)-\left(\p_x\left(\frac{u}{2n^2}\right)\p_x\nt,\p_x\nt\right)+\left(\frac{\p_xu}{n^2}\p_x\nt,\p_x\nt\right)\\
%	&+\left(\frac{1}{n}\p_x^2\ut,\p_x\nt\right)+\left(\frac{1}{n^2}\p_xn\p_x\ut,\p_x\nt\right)+\left(\p_xu^\sha\p_x\nt,\frac{\p_x\nt}{n^2}\right)+\left(\p_xn^\sha\p_x\ut,\frac{\p_x\nt}{n^2}\right)\\
%	&+\left(\p_x^2u^\sha\nt+\p_x^2n^\sha\ut,\frac{\p_x\nt}{n^2}\right)=\left(\p_xh_1,\frac{\p_x\nt}{n^2}\right)
%\end{align*}
%Note that
%\begin{align*}
%	&-\left(\p_t\left(\frac{1}{2n^2}\right)\p_x\nt,\p_x\nt\right)-\left(\p_x\left(\frac{u}{2n^2}\right)\p_x\nt,\p_x\nt\right)+\left(\frac{\p_xu}{n^2}\p_x\nt,\p_x\nt\right)\\
%	&=\left(\frac{\p_tn+u\p_xn}{n^3}\p_x\nt,\p_x\nt\right)+\left(\frac{\p_xu}{2n^2}\p_x\nt,\p_x\nt\right)=-\left(\frac{\p_xu}{2n^2}\p_x\nt,\p_x\nt\right)\\
%	&=-\left(\frac{\p_x\ut}{2n^2}\p_x\nt,\p_x\nt\right)-\left(\frac{\p_xu^\sha}{2n^2}\p_x\nt,\p_x\nt\right)
%\end{align*}
%\begin{align*}
%	\left(\frac{1}{n^2}\p_xn\p_x\ut,\p_x\nt\right)=\left(\frac{1}{n^2}\p_x\nt\p_x\ut,\p_x\nt\right)+\left(\frac{1}{n^2}\p_xn^\sha\p_x\ut,\p_x\nt\right)
%\end{align*}
For the troublesome term $\left(\frac{1}{n}\p_x^2\ut,\nt_x\right)$ on the left hand side above, we multiply \subeqref{eqn-pert-shock-new}{2} with $\nt_x/n$ to get
\begin{align*}
	\left(\frac{1}{n}\p_x^2\ut,\p_x\nt\right)=&A\left(\frac{\p_x\nt}{n},\p_x\nt\right)+\frac{d}{dt}\left(\ut,\nt_x\right)-(\ut_x,n\ut_x+u\nt_x+ u^\sha_x\nt+n^\sha_x\ut-h_1)\\
	&+\left(u\ut_x+\ut u^\sha_x,\nt_x\right)+\left(\nt(u_t^\sha+u^\sha u^\sha_x),\frac{\nt_x}{n}\right)\\
	&-\left(n\phi_x-n^\sha\phi^\sha_x,\frac{\nt_x}{n}\right)-\left(h_2-h_1u^\sha,\frac{\nt_x}{n}\right),
\end{align*}
where we have used the following equations from \subeqref{eqn-pert-shock-new}{1} and integration by parts 
\begin{align*}
	\left(\ut_t,\nt_x\right)=\frac{d}{dt}\left(\ut,\nt_x\right)-(\ut_x,n\ut_x+u\nt_x+ u^\sha_x\nt+n^\sha_x\ut-h_1).
\end{align*}
Substituting this into \eqref{2nd-shock-energy-Phi} to get
\begin{align*}
	&\frac{1}{2}\frac{d}{dt}\left(\frac{\nt_x}{n^2},\nt_x\right)+\frac{d}{dt}\left(\ut,\nt_x\right)+A\left(\frac{\p_x\nt}{n},\p_x\nt\right)\\
   &=\underbrace{(\ut_x,n\ut_x)}_{\textbf{K}_1}+\underbrace{(\ut_x, u^\sha_x\nt+n^\sha_x\ut)-\left(\ut u^\sha_x,\nt_x\right)-\left(\nt(u_t^\sha+u^\sha u^\sha_x),\frac{\nt_x}{n}\right)}_{\textbf{K}_2}\\
	&\quad+\underbrace{\left(n\phi_x-n^\sha\phi^\sha_x,\frac{\nt_x}{n}\right)}_{\textbf{K}_3}-\underbrace{\left(\frac{\nt_x}{n^2},2n^\sha_x\ut_x+\frac{1}{2}u^\sha_x\nt_x+\p_x^2u^\sha\nt+\p_x^2n^\sha\ut\right)}_{\textbf{K}_4}\\
	&\quad-\underbrace{\left(\frac{\ut_x}{2n^2}\nt_x,\nt_x\right)}_{\textbf{K}_5}+\underbrace{\left(h_2-h_1u^\sha,\frac{\nt_x}{n}\right)+\left(\frac{\nt_x}{n^2},\p_xh_1\right)-\left(\ut_x,h_1\right)}_{\textbf{K}_6}.
\end{align*}
By using $|n|\leq|\nt|+|n^\sha|\lesssim\varepsilon+2\nb^-\leq C$, we have
\begin{align*}
	\abs{\textbf{K}_1}\lesssim \norm{\ut_x}^2.
\end{align*}
It follows from \eqref{nsha-bound}, \eqref{usha-bound}, \eqref{n-usha-1st-bound} and the Young inequality that
\begin{align*}
	\abs{\textbf{K}_2}+\abs{\textbf{K}_4}\lesssim(\delta+\nu e^{-\alpha t})\norm{[\nt,\ut]}_{H^1}^2,
\end{align*}
and 
\begin{align*}
	\abs{\textbf{K}_3}\leq\left|\left(\phit_x,\nt_x\right)\right|+\left|\left(\phi^\sha_x\nt,\frac{\nt_x}{n}\right)\right|\lesssim\eta_2\norm{\nt_x}+\frac{1}{\eta_2}\norm{\phit_x}^2+(\delta+\nu e^{-\alpha t})\norm{\nt}_{H^1}^2.
\end{align*}
By using Sobolev embedding and \eqref{eqn-assum-shock}, we have
\begin{align*}
	\abs{\textbf{K}_5}\lesssim\norm{\ut_x}_{L^\infty}\norm{\nt_x}^2\lesssim\varepsilon\norm{\ut_x}_{H^1}^2+\varepsilon\norm{\nt_x}^2.
\end{align*}
By using estimates of $h_1,h_2$ and \eqref{eqn-assum-shock}, we have
\begin{align*}
	\abs{\textbf{K}_6}\lesssim\norm{[h_1,\p_xh_1,h_2]}\norm{[\nt_x,\nu_x]}\lesssim\varepsilon\delta^\frac{1}{2}\nu e^{-\alpha t}.
\end{align*}
Thus,
\begin{align}\label{n-1st-shock-energy}
	&\frac{1}{2}\frac{d}{dt}\left(\frac{\nt_x}{n^2},\nt_x\right)+\frac{d}{dt}\left(\ut,\nt_x\right)+\frac{A}{\nb^+}\norm{\nt_x}\nonumber\\
	&\lesssim\norm{\phit_x}^2+\norm{\ut_x}^2+(\delta+\nu e^{-\alpha t})\norm{[\nt,\ut]}_{H^1}^2+\varepsilon\delta^\frac{1}{2}\nu e^{-\alpha t}.
\end{align}
Taking suitable linear combination of \eqref{u-0th-shock-energy},\eqref{u-1st-shock-energy} and \eqref{n-1st-shock-energy}, integrating with respect to time in $[0,\tau]$, choosing $\delta_0,\nu_0,\varepsilon_0$ small and using \eqref{ineq-1st-shock}, \eqref{1st-shock-energy-phit}, we can obtain that
\begin{align}\label{2nd-shock-energy}
	&\sup_{t\in[0,T]}\norm{[\ut,\ut_x,\nt_x]}+\int_{0}^{T}\norm{[\nt_x,\ut_x,\p_x^2\ut]}^2dt\nonumber\\
	&\lesssim \norm{[\ut_0,\ut_{0x},\nt_{0x}]} +\int_{0}^{T}\norm{\phit_x}^2 dt+(\delta+\nu)\int_{0}^{T}\norm{[\nt,\ut]}^2 dt+\nu\\
	&\lesssim\norm{[\Phi_0,\Psi_0]}_{H^2}^2
	 +(\delta+\varepsilon+\nu) \int_{0}^{T} \norm{\phit_t}^2 \, dt +\nu .\nonumber
\end{align}
%The terms $\textbf{I}_5,\textbf{I}_6$ can be bounded as follows
%\begin{align*}
%	\abs{\textbf{I}_5}&\lesssim(\nu+\delta)\norm{[\nt,\ut]}_{H^1}^2+\nu e^{-\alpha t}\norm{\p_x\nt}^2\\
%	\abs{\textbf{I}_5}&\lesssim\norm{\ut}_{L^\infty}\norm{\p_x\nt}^2\lesssim\varepsilon\norm{\p_x\nt}^2
%\end{align*}
%Therefore, we have that
%\begin{align}
%	\sup_{t\in[0,T]}\norm{\p_x\nt}+\int_{0}^{T}\norm{\p_x\nt}^2dt\lesssim\norm{ [\Phi_0, \phit_0]}_{H^1}^2 +\norm{\Psi_0}_{H^2}^2+(\delta+\varepsilon) \int_{0}^{T} \norm{ \p_t \phit}^2 \, dt +\nu,
%\end{align}
%if $\delta\leq \delta_0$, $\nu\leq\gamma_0\delta$, $\varepsilon<\varepsilon_0$ with $\delta_0,\gamma_0,\varepsilon_0$sufficiently small.
Finally, we take the derivative $\p_t$ on \subeqref{eqn-pert-shock-new}{3} and inner product with $-\p_t\phit$ to get
\begin{align*}
	(\p_t\phit_x,\p_t\phit_x)+(e^{-\phisinf}\p_t\phit,\p_t\phit)=-\underbrace{(e^{-\phisinf}\p_t\phisinf\phit,\p_t\phit)}_{\textbf{K}_7}-\underbrace{(\p_t\nt,\p_t\phit)}_{\textbf{K}_8}-\underbrace{(\p_th_3+\p_tq_5+\p_tq_6,\p_t\phit)}_{\textbf{K}_9}.
\end{align*}
It follows from \eqref{eqn-shock-deriv} and the Young inequality that
\begin{align*}
	\abs{{\textbf{K}_7}}&\lesssim\eta\norm{\p_t\phit}^2+\frac{\delta}{\eta}\norm{\phit}^2,\\
	\abs{{\textbf{K}_8}}&\lesssim\eta\norm{\p_t\phit}^2+\frac{1}{\eta}\norm{\p_t\nt}^2\lesssim\eta\norm{\p_t\phit}^2+\frac{1}{\eta}(\norm{h_1}^2+\norm{\mt}^2)\\
	&\lesssim\eta\norm{\p_t\phit}^2+\frac{1}{\eta}(\nu e^{-\alpha t}+\norm{[\nt,\ut]}_{H^1}^2),\\
	\abs{{\textbf{K}_9}}&\lesssim\nu e^{-\alpha t}\norm{\p_t\phit}^2+\norm{\phit}_{L^\infty}\norm{\p_t\phit}^2+\nu e^{-\alpha t}\lesssim(\nu+\varepsilon)\norm{\p_t\phit}^2+\nu e^{-\alpha t}.
\end{align*}
By choosing $\eta$ small enough, we obtain 
\begin{align*}
	\int_{0}^{T}\norm{\p_t\phit}^2dt\lesssim \int_{0}^{T}(\norm{[\nt,\ut]}_{H^1}^2+\norm{\phit}^2) dt+\nu.
\end{align*}
Plugging this into \eqref{2nd-shock-energy} yields that
\begin{align*}
	&\sup_{t\in[0,T]}\norm{[\ut,\ut_x,\nt_x]}+\int_{0}^{T}\norm{[\nt_x,\ut_x,\p_x^2\ut]}^2\,dt \lesssim \norm{[\Phi_0,\Psi_0]}_{H^2}^2+\nu .
\end{align*}
Therefore, \eqref{ineq-2nd-shock} follows.
\end{proof}

Now we will complete the proof the Theorem \ref{thm-shock}.
	\begin{proof}[Proof of Proposition \ref{prop-apriori-shock} and Theorem \ref{thm-shock}]
		Proposition \ref{prop-apriori-shock} is proved by combining the estimates \eqref{ineq-0th-shock}, \eqref{ineq-1st-shock}, \eqref{ineq-2nd-shock}, and \eqref{ineq-2nd-shock-phit}. Hence, Theorem \ref{thm-shock} follows from the standard local-existence theory, Proposition \ref{prop-apriori-shock} and the fact of the ansatz
		\begin{equation}
			\norm{[n^\sha,m^\sha,\phi^\sha](\cdot,t) - [n^s,m^s,\phi^s](\cdot-st-\X_\infty) }_{L^\infty(\R)} \leq \nu e^{-\alpha t}.
		\end{equation}
	\end{proof}

%%%%%%%%%%%%%%%%%%%%%%%%%%%%%%%%%%%%%%%%%%%%%%%%%%%%%
% rarefaction wave 
%%%%%%%%%%%%%%%%%%%%%%%%%%%%%%%%%%%%%%%%%%%%%%%%%%%%%
%\newpage
\section{Stability of rarefaction wave}
The proof of Theorem \ref{thm-rare} is based on the following \textit{a priori} estimates for the solution $[\nt, \ut, \phit]$ to \eqref{eqn-pert-rare}-\eqref{init-rare2}, since the local existence of $[\nt, \ut, \phit]$ follows from the standard iteration method, which is omitted here for simplicity.

\begin{proposition}[\textit{A priori} estimates]
	\label{prop-apriori-rare}
	For any $T>0$, assume that $[\nt, \ut, \phit]$ is a smooth solution to \eqref{eqn-pert-rare} and \eqref{init-rare2}. Then there exist $\nu_0>0$, $\varepsilon_0>0$, $0<\theta<1$ independent of $T$ such that if $\nu<\nu_0$, and 
	\begin{equation}
		\label{eqn-assum-rare}
		\varepsilon:=\sup\limits_{t\in[0,T]} \norm{[\nt, \ut, \phit]}_{H^1}+\epsilon < \varepsilon_0,
	\end{equation}
	then we have 
	\begin{equation}
		\label{eqn-aprioi-rare}
		\begin{aligned}
			\sup\limits_{t\in[0,T]} \norm{[\nt, \ut, \phit]}_{H^1}^2+
			&\int_{0}^{T} \Big(\norm{ \abs{ u^r_x}^{1/2}\ut}^2 + \norm{[\nt_x, \ut_x, \p_x^2\ut, \phit_x,\p_x^2\phit]}^2 \Big) \, dt\\
			&\qquad\qquad
			\lesssim  \norm{[\nt_0,\ut_0]}_{H^1}^2 +\epsilon^\frac{\theta}{1+\theta}+\nu .
		\end{aligned}
	\end{equation}
\end{proposition}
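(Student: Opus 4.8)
The plan is to establish Proposition \ref{prop-apriori-rare} by a direct energy method applied to the perturbation $[\nt,\ut,\phit]$ itself: unlike the shock case there is no need for anti-derivatives, since for a rarefaction wave the ansatz \eqref{def-ansatz-rare} already carries the correct far-field oscillations and the perturbation is expected to lie in $H^1(\R)$. As preliminary bounds I would record, using Lemma \ref{Lem-periodic}, Lemma \ref{lem-rare-prop} and the smallness \eqref{eqn-assum-rare}, that $n$ stays in a fixed compact subset of $(0,\infty)$ and that $\norm{[n^\sha-n^r,u^\sha-u^r,\phi^\sha-\phi^r]}_{W^{1,\infty}}\lesssim\nu e^{-\alpha t}$; these let me treat the ansatz as an $O(\nu e^{-\alpha t})$ perturbation of the smooth profile. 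The error terms $k_i=\sR$ in \eqref{eqn-pert-rare} are harmless by \eqref{decay-TiSp} and contribute the $\nu$ on the right of \eqref{eqn-aprioi-rare}.

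\textbf{Zeroth-order estimate.} Mimicking the entropy computation used in Lemma \ref{lem-2nd-shock}, I would multiply \subeqref{NSP-Ion}{1} by the relative entropy $\Lambda(n,n^\sha)=A(\ln(n/n^\sha)+n^\sha/n-1)$ and the velocity equation \subeqref{eqn-pert-rare}{2} by $n\ut$, while the Poisson coupling is incorporated through the electron relative entropy $e^{-\phi^\sha}(e^{-\phit}-1+\phit)\ge0$ together with $\tfrac12|\phit_x|^2$. After integration by parts the genuinely nonlinear convection produces the crucial term $\tfrac12\int n\,\p_x u^r\,|\ut|^2\,dx$, which is nonnegative because $\p_x u^r>0$ by Lemma \ref{lem-rare-prop}(i); this is precisely the source of $\norm{\abs{u^r_x}^{1/2}\ut}^2$ in \eqref{eqn-aprioi-rare}. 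The viscosity yields $\norm{\ut_x}^2$, and the elliptic equation \subeqref{eqn-pert-rare}{3} --- whose linearization carries the uniformly invertible operator $\p_x^2-e^{-\phi^\sha}$ --- yields $\norm{\phit}_{H^2}\lesssim\norm{\nt}+\norm{\p_x^2\phi^r}+\norm{k_3}+\text{(quadratic)}$, so $\phit$ and its derivatives are controlled by $\nt$ plus the approximate-wave and $\nu$-errors.

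\textbf{First-order estimates.} Next I would differentiate \eqref{eqn-pert-rare} in $x$ and argue as in Lemmas \ref{lem-1st-shock}--\ref{lem-2nd-shock} to recover $\norm{\nt_x}^2,\norm{\ut_x}^2,\norm{\p_x^2\ut}^2$, pairing the differentiated velocity equation against $\nt_x$ and $-\p_x^2\ut/n$ and using the continuity equation to convert time derivatives; a further pairing of the differentiated Poisson equation recovers $\norm{\p_x^2\phit}^2$ and $\norm{\p_t\phit}^2$. All nonlinear terms are cubic and are absorbed by the smallness of $\varepsilon$ via $H^1(\R)\hookrightarrow L^\infty(\R)$, and the profile-interaction terms are multiples of derivatives of $[n^r,u^r,\phi^r]$ controlled by the $L^p$-bounds of Lemma \ref{lem-rare-prop}(ii). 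Adding the zeroth- and first-order inequalities and choosing $\nu_0,\varepsilon_0$ small enough to absorb the $(\varepsilon+\nu)$-prefactored dissipation into the left-hand side gives \eqref{eqn-aprioi-rare} up to the error generated by the approximate rarefaction wave.

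\textbf{Main obstacle.} The genuine difficulty is not the nonlinear terms but the time-integrated error coming from the approximate rarefaction wave, precisely because the wave strength $\delta_r$ is \emph{arbitrary} (Theorem \ref{thm-rare} assumes no smallness of $\delta_r$), so $\norm{\p_x u^r}_{L^\infty}\sim t^{-1}$ is not integrable and the contributions of $\p_x^2 u^r/n^r$ and $\p_x^2\phi^r$ cannot be estimated crudely. The remedy is to pair these terms against the good term by Cauchy--Schwarz in the measure $\abs{\p_x u^r}\,dx$, reducing them to a pure profile integral $\int_0^T\!\!\int \abs{\p_x u^r}^{-1}\abs{\p_x^2 u^r}^2\,dx\,dt$ and its analogues, which are then estimated by splitting the $t$-integral at an optimized time and inserting the two-sided $\min$-bounds of Lemma \ref{lem-rare-prop}(ii); the optimization, rather than a direct integration of the $t^{-1}$ tail, is what produces the fractional gain $\epsilon^{\frac{\theta}{1+\theta}}$ for a suitable $0<\theta<1$. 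Once \eqref{eqn-aprioi-rare} is in hand it closes the continuity argument, and combined with local existence and the convergence of the ansatz to $[n^R,m^R,\phi^R]$ from Lemma \ref{lem-rare-prop}(iii) it yields Theorem \ref{thm-rare}.
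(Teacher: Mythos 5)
Your overall architecture coincides with the paper's proof (Lemmas \ref{lem-0th-rare}--\ref{lem-phi-rare}): a direct energy method on $[\nt,\ut,\phit]$ with the relative entropy $\Lambda(n,n^\sha)$, the electron entropy $(e^{-\phi^\sha},1-e^{-\phit}-\phit e^{-\phit})$ together with $\tfrac12\norm{\phit_x}^2$ for the Poisson coupling, the good term $(\ut^2,nu^r_x)$ coming from $\p_x u^r>0$, first-order estimates obtained by differentiating in $x$ and pairing against $\nt_x$ and $\ut_x$, and an elliptic bound for $\phit$ in terms of $\nt$. You also correctly isolate the critical difficulty: the term $\left(\frac{\p_x^2u^r}{n^r},n\ut\right)$ (the paper's $\textbf{I}_2$), which is linear in $\ut$, a quantity that carries neither decay nor dissipation.

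However, your remedy for that term fails. Weighted Cauchy--Schwarz against the good term gives
\[
\abs{\left(\tfrac{\p_x^2u^r}{n^r},n\ut\right)}\le\tfrac14\left(nu^r_x,\ut^2\right)+C\int_\R\frac{\abs{\p_x^2u^r}^2}{u^r_x}\,dx ,
\]
and the profile integral on the right is \emph{not} time-integrable uniformly in $T$. For the construction \eqref{wr} one has the pointwise bound $\abs{\p_x^2u^r}\lesssim\epsilon\,\p_xu^r$ (which, incidentally, is not contained in Lemma \ref{lem-rare-prop}(ii) --- that lemma only gives $L^p$ norms, so your reduction already needs an unstated ingredient), whence $\int_\R\abs{\p_xu^r}^{-1}\abs{\p_x^2u^r}^2dx\lesssim\epsilon\norm{\p_x^2u^r}_{L^1}\lesssim\epsilon\min\{\delta_r\epsilon,t^{-1}\}$; solving Burgers by characteristics with the $\tanh$ data shows this rate is sharp, the large-time tail being exactly of order $\epsilon t^{-1}$. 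Consequently $\int_0^T\int_\R\abs{\p_xu^r}^{-1}\abs{\p_x^2u^r}^2\,dx\,dt\sim\epsilon\log T$, and no choice of a splitting time can remove this: the tail integral $\int^T\epsilon t^{-1}dt$ itself diverges, so your ``optimization'' cannot produce a $T$-independent bound, let alone the gain $\epsilon^{\frac{\theta}{1+\theta}}$. The missing idea is that one must borrow a fractional power of the \emph{viscous} dissipation $\norm{\ut_x}^2$ instead of paying the full quadratic $\ut^2$ to the weighted good term: the paper estimates $\abs{\textbf{I}_2}\lesssim\norm{\p_x^2u^r}_{L^{1+\theta}}\norm{\ut}_{L^{(1+\theta)/\theta}}$, uses $\norm{\p_x^2u^r}_{L^{1+\theta}}\lesssim\epsilon^{\frac{\theta}{1+\theta}}(1+t)^{-1}$ (Lemma \ref{lem-rare-prop}(ii) with $p=1+\theta$), interpolates $\norm{\ut}_{L^{(1+\theta)/\theta}}\lesssim\norm{\ut_x}^{\frac{1-\theta}{2(1+\theta)}}\norm{\ut}^{\frac{1+3\theta}{2(1+\theta)}}$ via Sobolev embedding, and then Young's inequality yields $\epsilon^{\frac{\theta}{1+\theta}}\big(\norm{\ut_x}^2+(1+t)^{-p_1}+(1+t)^{-p_2}\norm{\ut}^2\big)$ with $p_1,p_2>1$. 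This is simultaneously the source of the exponent $\epsilon^{\frac{\theta}{1+\theta}}$ in \eqref{eqn-aprioi-rare} and the only mechanism that upgrades the borderline $t^{-1}$ decay to an integrable rate; by contrast, the profile terms paired against $\phit_x$ (such as $\p_x^2\phi^r$ and $\p_t\p_x\phi^r$) are indeed harmless as you suggest, since $\phit_x$ is dissipated and those profile factors enter squared, with $\norm{\p_x^2\phi^r}^2\lesssim\epsilon(1+t)^{-2}$.
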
 
Proposition \ref{prop-apriori-rare} is a consequence of the following lemmas of energy estimates.
%\red{We first give the estimates of $k_1,k_2,k_3$, which are useful in the forthcoming \textit{a priori} estimates. 
%\begin{lemma}
%		\label{lem-ki-est}
%	Under the assumptions of Proposition \ref{prop-apriori-rare}, there exist positive constants $\nu_0>0$, $\varepsilon_0>0$, $0<\theta<1$ independent of $T$ such that if $\nu<\nu_0$, and \eqref{eqn-assum-rare} holds, then
%	\begin{equation}
%		\norm{k_1,k_2,k_3}_{H^2}\lesssim\nu e^{-\alpha t}.
%	\end{equation}
%\end{lemma}
%} 
We start from zero-order energy estimate which is stated in the following lemma. 
\begin{lemma}
	\label{lem-0th-rare}
	Under the assumptions of Proposition \ref{prop-apriori-rare}, there exist positive constants $\nu_0>0$, $\varepsilon_0>0$, $0<\theta<1$ independent of $T$ such that if $\nu<\nu_0$, and \eqref{eqn-assum-rare} holds, then
    	\begin{equation}
    	\label{ineq-0th-rare}
    	\begin{aligned}
    		&\sup\limits_{t\in[0,T]} \norm{[\ut, \nt,\phit,\phit_x]}^2+
    		\int_{0}^{T} \norm{ [\abs{u^r_x}^{\frac{1}{2}}\ut, \ut_x] }^2 \, dt  \\
    		&
    		\lesssim  \norm{[\nt_0,\ut_0]}^2 + \norm{ \phit_0}_{H^1}^2  +(\epsilon+\nu+\eta)\int_{0}^{T}\norm{\phit_x}^2\,dt+\epsilon^\frac{\theta}{1+\theta} +\nu,
    	\end{aligned}
    \end{equation}
	where $\eta>0$ is a constant to be determined later.
\end{lemma}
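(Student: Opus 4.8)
The plan is to run a relative-energy (entropy) estimate directly on the perturbation $[\nt,\ut,\phit]$ governed by \eqref{eqn-pert-rare}; unlike the shock case, no anti-derivatives are needed here because the rarefaction perturbation is integrable. I would work with a relative-energy functional $\mathcal{E}_0(t)$ that is comparable to $\norm{[\ut,\nt,\phit,\phit_x]}^2$, assembled from the kinetic energy $\frac{1}{2}n\ut^2$, the convex relative gas potential $\Lambda(n,n^\sha)$ (as in the proof of Lemma \ref{lem-2nd-shock}), the electric-field energy $\frac{1}{2}\phit_x^2$, the convex relative Boltzmann entropy $e^{-\phi}-e^{-\phi^\sha}+e^{-\phi^\sha}\phit$ of the electrons, together with the $n$–$\phi$ coupling term $\nt\phit$ dictated by the Poisson structure. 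By convexity and the smallness \eqref{eqn-assum-rare}, for small $\varepsilon$ one has $\mathcal{E}_0(t)\sim\norm{[\ut,\nt,\phit,\phit_x]}^2$, which is exactly the quantity controlled in the supremum on the left of \eqref{ineq-0th-rare}.

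First I would multiply \subeqref{eqn-pert-rare}{1} by $\Lambda(n,n^\sha)$, \subeqref{eqn-pert-rare}{2} by $n\ut$, and use the Poisson relation \subeqref{eqn-pert-rare}{3}, then sum and integrate over $\R$. The viscous contribution $\frac{\p_x^2 u}{n}-\frac{\p_x^2 u^\sha}{n^\sha}$ yields the dissipation $-\norm{\ut_x}^2$ after one integration by parts, the factor mismatch $1-n/n^\sha$ being a quadratic error. The genuine nonlinearity of the flux together with the monotonicity $\p_x u^r>0$ from Lemma \ref{lem-rare-prop}(i) produces the sign-definite rarefaction term $\norm{\abs{u^r_x}^{1/2}\ut}^2$. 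The delicate point is the electric force $-\int n\ut\,\p_x\phit$: exactly as in the proof of Lemma \ref{lem-0th-shock}, I would rewrite it by means of $\p_x(n\ut)=k_1-\nt_t-\p_x(u^\sha\nt)$ (from the mass equation) and then the Poisson equation, so that its principal part assembles the time derivative $\frac{1}{2}\frac{d}{dt}\int(\phit_x^2+e^{-\phi^\sha}\phit^2)$ already contained in $\mathcal{E}_0$, leaving only lower-order and error contributions.

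Next I would estimate the remaining terms in three groups. The interaction errors $k_i=\sR$ obey \eqref{decay-TiSp}, so their inner products against $\Lambda$, $n\ut$ or $\phit$ are bounded by $C\nu e^{-\alpha t}(1+\norm{[\ut,\nt,\phit]}_{H^1}^2)$, which is integrable in time and produces the final summand $\nu$. The approximation errors $\frac{\p_x^2 u^r}{n^r}$ and $\p_x^2\phi^r$, inherited from the smooth rarefaction wave, are controlled through Lemma \ref{lem-rare-prop}(ii); balancing its two bounds ($\delta_r\epsilon^{2-1/p}$ against $\epsilon^{1-1/p}t^{-1}$) and optimizing the splitting of the $t$-integral yields the term $\epsilon^{\theta/(1+\theta)}$. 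Finally, the genuinely nonlinear and coupling terms, such as $\int u^\sha\nt\phit_x$, the $O(\phit^2)$ remainder of the Boltzmann entropy, and products of perturbations with profile derivatives, carry a small factor $\epsilon+\nu+\varepsilon$ or are absorbed by Young's inequality with the parameter $\eta$ into $\norm{\ut_x}^2$, $\norm{\abs{u^r_x}^{1/2}\ut}^2$, or into the carried term $(\epsilon+\nu+\eta)\int_0^T\norm{\phit_x}^2\,dt$ appearing on the right of \eqref{ineq-0th-rare}. Integrating the resulting differential inequality over $[0,T]$ and invoking the positivity of $\mathcal{E}_0$ together with the initial bound then closes \eqref{ineq-0th-rare}.

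I expect the principal obstacle to be the electric-field coupling: converting $-\int n\ut\,\p_x\phit$ into the time derivative of the electric energy modulo controllable remainders forces the simultaneous use of all three equations of \eqref{eqn-pert-rare} and a careful accounting of the spatially non-localized pieces of the ansatz (the far-field periodic oscillations) and of the quasineutral defect $\p_x^2\phi^r$. It is precisely because the full $\phit$-dissipation is not captured at this zeroth-order stage that $\int_0^T\norm{\phit_x}^2\,dt$ must survive on the right of \eqref{ineq-0th-rare} with the small, to-be-absorbed coefficient $\eta$, to be closed later by an elliptic estimate coming from the Poisson equation; the second, more technical hurdle is the interpolation that produces the power $\epsilon^{\theta/(1+\theta)}$.
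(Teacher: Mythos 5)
Your proposal follows essentially the same route as the paper's proof: the paper pairs the momentum equation with $n\ut$ and the mass equation with $\Lambda(n,n^\sha)$, converts the electric coupling $-(\phit_x,n\ut)$ into $\tfrac12\tfrac{d}{dt}\norm{\phit_x}^2+\tfrac{d}{dt}\big(e^{-\phi^\sha},1-e^{-\phit}-\phit e^{-\phit}\big)$ exactly through the mass and Poisson equations as you describe, extracts $\epsilon^{\theta/(1+\theta)}$ from the $\p_x^2 u^r$ term by the same H\"older/interpolation balance, and leaves $(\epsilon+\nu+\eta)\int_0^T\norm{\phit_x}^2\,dt$ on the right via Young's inequality applied to the $\p_x^2\phi^r$ and $u^\sha_x$ terms, to be closed later by the elliptic estimate. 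One caveat: drop the $\nt\phit$ cross term from your ansatz for $\mathcal{E}_0$ — by the Poisson equation $\int\nt\phit\,dx$ equals $-\norm{\phit_x}^2-\big(e^{-\phi^\sha}(1-e^{-\phit}),\phit\big)$ up to error terms, so adding it with a unit positive coefficient would destroy the coercivity $\mathcal{E}_0\sim\norm{[\ut,\nt,\phit,\phit_x]}^2$; the functional that actually emerges from the computation you outline is the paper's one, without that term.
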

\begin{proof}
	Set $$\Lambda(n,n^\sha)=A\int^{n}_{n^\sha}\frac{s-n^\sha}{s^2}ds=A\left(\ln\left(\frac{n}{n^\sha}\right)+\frac{n^\sha}{n}-1\right).$$
	Taking the $L^2$-inner products between  \subeqref{eqn-pert-rare}{2} and $n\ut$ as well as \subeqref{NSP-Ion}{1} and $\Lambda(n,n^\sha)$, and summing the resulting equation together, we have
	\begin{align}\label{0th-rare-energy}
		&\frac{d}{dt}\left\{\frac{1}{2}\left(n\ut,\ut\right)+\left(n,\Lambda\right)\right\}+\left(\ut^2,nu^r_x\right)-\left(\phit_x,n\ut\right)+(\ut_x,\ut_x)\nonumber\\
		&=-\left(\ut^2,n(u^\sha-u^r)_x\right)-\left(\frac{\p_x^2(u^\sha-u^r)}{n^\sha}\nt,\ut\right)-\left(\frac{\p_x^2u^r}{n^\sha}\nt,\ut\right)+\left(\frac{\p_x^2 u^r}{n^r},n\ut\right)\\
		&\quad-A\left(\frac{\nt}{n^\sha},k_1\right)+(k_2,n\ut),\nonumber
	\end{align} 
where we have used the following identities
\begin{align*}
	-\frac{1}{2}\left(\ut^2,n_t\right)+(u\p_xu-u^\sha\p_xu^\sha,n\ut)=\left(\ut^2,nu^\sha_x\right)=\left(\ut^2,nu^r_x\right)+\left(\ut^2,n(u^\sha-u^r)_x\right),
\end{align*}
\begin{align*}
	(n,-\Lambda_t)+((nu)_x,\Lambda)+A\left(\frac{n_x}{n}-\frac{n^\sha_x}{n^\sha},n\ut\right)=A\left(n,\left(\frac{1}{n}-\frac{1}{n^\sha}\right)k_1\right)=-A\left(\frac{\nt}{n^\sha},k_1\right),
\end{align*}
and
\begin{align*}
	\left(\frac{\p_x^2u}{n}-\frac{\p_x^2u^\sha}{n^\sha},n\ut\right)&=\left(\p_x^2\ut,\ut\right)-\left(\frac{\p_x^2u^\sha}{n^\sha}\nt,\ut\right)\\
	&=-(\p_x\ut,\p_x\ut)-\left(\frac{\p_x^2(u^\sha-u^r)}{n^\sha}\nt,\ut\right)-\left(\frac{\p_x^2u^r}{n^\sha}\nt,\ut\right),
\end{align*}
which follows from integration by parts and \subeqref{eqn-pert-rare}{1}. For the third term on the left hand side of \eqref{0th-rare-energy}, we utilize integration by parts, \subeqref{eqn-pert-rare}{1} and \subeqref{eqn-pert-rare}{3} to obtain that
\begin{align*}
	&-\left(\phit_x,n\ut\right)=\left(\phit,(n\ut)_x\right)=\left(\phit,-\nt_t+k_1-(\nt u^\sha)_x\right)\\
	&=\left(\phit,-\p_t\p_x^2\phit\right)+\left(\phit,\p_t\left(e^{-\phi^\sha}\left(1-e^{-\phit}\right)\right)\right)-(\phit,\p_t\p_x^2\phi^r)+\left(\phit,\p_tk_3+k_1-\left(\nt u^\sha\right)_x\right)\\
	&=\frac{1}{2}\frac{d}{dt}\left(\phit_x,\phit_x\right)+\left(\phit,\p_t\left(e^{-\phi^\sha}\left(1-e^{-\phit}\right)\right)\right)-\left(\phit,\left(\nt u^\sha\right)_x\right)-\left(\phit,\p_t\p_x^2\phi^r\right)+\left(\phit,\p_tk_3+k_1\right),
\end{align*} 
and
\begin{align*}
	&\left(\phit,\p_t\left(e^{-\phi^\sha}\left(1-e^{-\phit}\right)\right)\right)-\left(\phit,\left(\nt u^\sha\right)_x\right)=\left(\phit,\p_t\left(e^{-\phi^\sha}\left(1-e^{-\phit}\right)\right)\right)+\left(\phit_x,\nt u^\sha\right)\\
	&=\left(\phit,\p_t\left(e^{-\phi^\sha}\left(1-e^{-\phit}\right)\right)\right)-\left(\phit_x\left(1-e^{-\phit}\right),e^{-\phi^\sha}u^\sha\right)+\left(\phit_x,\p_x^2\phit u^\sha\right)+\left(\phit_x,\left(\p_x^2\phi^r-k_3\right)u^\sha\right)\\
	&=\frac{d}{dt}\left(\phit,e^{-\phi^\sha}\left(1-e^{-\phit}\right)\right)-\left(e^{-\phi^\sha},\p_t\left(\phit+e^{-\phit}\right)\right)-\left(e^{-\phi^\sha}u^\sha,\p_x\left(\phit+e^{-\phit}\right)\right)\\
	&\quad-\frac{1}{2}\left(u^\sha_x,\phit_x^2\right)+\left(\phit_x,\left(\p_x^2\phi^r-k_3\right)u^\sha\right)\\
	&=\frac{d}{dt}\left(e^{-\phi^\sha},1-e^{-\phit}-\phit e^{-\phit}\right)+\left(\p_te^{-\phi^\sha}+\p_x\left(e^{-\phi^\sha}u^\sha\right),\phit+e^{-\phit}-1\right)\\
	&\quad-\frac{1}{2}\left(u^\sha_x,\phit_x^2\right)+\left(\phit_x,\left(\p_x^2\phi^r-k_3\right)u^\sha\right).
\end{align*}  
Thus,
\begin{align*}
	-\left(\phit_x,n\ut\right)=&\frac{1}{2}\frac{d}{dt}\left(\phit_x,\phit_x\right)+\frac{d}{dt}\left(e^{-\phi^\sha},1-e^{-\phit}-\phit e^{-\phit}\right)\\
	&-\left(\phit,\p_t\p_x^2\phi^r\right)+\left(\p_te^{-\phi^\sha}+\p_x\left(e^{-\phi^\sha}u^\sha\right),\phit+e^{-\phit}-1\right)\\
	&-\frac{1}{2}\left(u^\sha_x,\phit_x^2\right)+\left(\phit,\p_tk_3+k_1\right)+(\p_x\phit,(\p_x^2\phi^r+k_3)u^\sha).
\end{align*}
Therefore, we obtain that
\begin{equation}\label{0th-rare-1}
	\begin{aligned}
	&\frac{d}{dt}\left\{\frac{1}{2}\left(n\ut,\ut\right)+\left(n,\Lambda\right)+\frac{1}{2}(\phit_x,\phit_x)+(e^{-\phi^\sha},1-e^{-\phit}-\phit e^{-\phit})\right\}+\left(\ut^2,nu^r_x\right)+(\ut_x,\ut_x)\\
	&=\underbrace{-\left(\ut^2,n(u^\sha-u^r)_x\right)-\left(\frac{\p_x^2(u^\sha-u^r)}{n^\sha}\nt,\ut\right)}_{\textbf{I}_1}+\underbrace{\left(\frac{\p_x^2 u^r}{n^r},n\ut\right)-\left(\frac{\p_x^2u^r}{n^\sha}\nt,\ut\right)}_{\textbf{I}_2}\\
	&\quad+\underbrace{(\phit,\p_t\p_x^2\phi^r)}_{\textbf{I}_3}+\underbrace{\frac{1}{2}\left(u^\sha_x,\phit_x^2\right)}_{\textbf{I}_4}-\underbrace{(\p_x\phit,\p_x^2\phi^ru^\sha)}_{\textbf{I}_5}-\underbrace{(\p_te^{-\phi^\sha}+\p_x(e^{-\phi^\sha}u^\sha),\phit+e^{-\phit}-1)}_{\textbf{I}_6}\\
	&\quad\underbrace{-(\p_x\phit,k_3u^\sha)-A\left(\frac{\nt}{n^\sha},k_1\right)+(k_2,n\ut)-(\phit,\p_tk_3+k_1)}_{\textbf{I}_7}.
\end{aligned} 
\end{equation} 
To estimate $\textbf{I}_i$, we first give estimates of $[n^\sha,u^\sha,\phi^\sha]$. It follows from \eqref{def-ansatz-rare} and Lemma \ref{decay-per} that
\begin{equation}\label{n-u-phi-bdd}
	\norm{[n^\sha-n^r,u^\sha-u^r,\phi^\sha-\phi^r]}_{W^{2,\infty}}\lesssim\norm{[\rho^\pm,w^\pm,\varphi^\pm]}_{W^{2,\infty}}\lesssim\nu e^{-\alpha t}.
\end{equation}
If $\nu,\varepsilon$ sufficiently small, then
\begin{equation}\label{n-bdd}
	\frac{\nb^-}{2}\leq n^\sha\leq \frac{3}{2}\nb^+,\quad \frac{\nb^-}{4}\leq n\leq 2\nb^+ ,
\end{equation}
and
\begin{equation}\label{u-bdd}
	|u^\sha|\leq |u^r|+\nu e^{-\alpha t}\leq\max\{ |u^+|,|u^-|\}+\nu\leq C.
\end{equation}
Then, it follows from \eqref{n-u-phi-bdd}, \eqref{n-bdd} and Lemma \ref{lem-rare-prop} that
\begin{align*}
	\abs{\textbf{I}_1}&\lesssim\norm{(u^\sha-u^r)_x}_{L^\infty}\norm{\ut}^2+\norm{\p_x^2(u^\sha-u^r)}_{L^\infty}\norm{\nt}\norm{\ut}\\
	&\lesssim\nu e^{-\alpha t}\norm{[\nt,\ut]}^2.
\end{align*}
It follows from \eqref{n-bdd}, the H\"{o}lder inequality, the Young inequality, Lemma \ref{lem-rare-prop} and the Sobolev inequality that, for a constant $0<\theta<1$,
\begin{align*}
	\abs{\textbf{I}_2}&\lesssim \norm{\p_x^2u^r}_{L^{1+\theta}}\|\ut\|_{L^{\frac{1+\theta}{\theta}}}\lesssim\norm{\p_x^2u^r}_{L^{1+\theta}}\|\ut\|_{L^\infty}^\frac{1-\theta}{1+\theta}\norm{\ut}^\frac{2\theta}{1+\theta}\\
	&\lesssim\epsilon^\frac{\theta}{1+\theta}(1+t)^{-1}\norm{\ut_x}^\frac{1-\theta}{2(1+\theta)}\norm{\ut}^{\frac{1+3\theta}{2(1+\theta)}}\\
	&\lesssim\epsilon^\frac{\theta}{1+\theta}\left(\norm{\ut_x}^2+(1+t)^{-\frac{5+3\theta}{4+4\theta}}+(1+t)^{-\frac{3+5\theta}{2(1+3\theta)}}\norm{\ut}^2\right).
\end{align*}
%Since $\phi^r=-\ln n^r$ implies that
%\begin{equation}
%	\begin{aligned}
%		\norm{\p_t\p_x\phi^r}^2&\lesssim\norm{[\p_x^2n^r,\p_x^2u^r]}^2+\norm{[\p_xn^r,\p_xu^r]}_{L^4}^4\\
%		&\lesssim\norm{[\p_x^2n^r,\p_x^2u^r]}^2+\norm{[\p_xn^r,\p_xu^r]}^3\norm{[\p_x^2n^r,\p_x^2u^r]}\\
%		&\lesssim  \epsilon (1+t)^{-2} + \epsilon^{\frac{1}{2}} \cdot ((1+t)^{-\frac{1}{2}})^2  \cdot \epsilon^{\frac{1}{2}} (1+t)^{-1} \\
%		&\lesssim  \epsilon (1+t)^{-2}.
%	\end{aligned}
%	\end{equation}
We can obtain from integration by parts, Lemma \ref{lem-rare-prop} and the Young inequality that
\begin{align*}
	\abs{\textbf{I}_3}=\left|(\p_x\phit,\p_t\p_x\phi^r)\right|\lesssim\eta\norm{\phit_x}^2+C_\eta\norm{\p_t\p_x\phi^r}^2\lesssim\eta\norm{\phit_x}^2+\epsilon C_\eta(1+t)^{-2}.
\end{align*} 
It follows from \eqref{n-u-phi-bdd} and Lemma \ref{lem-rare-prop}
\begin{align*}
	\abs{\textbf{I}_4}&\lesssim\left(\norm{u^r_x}_{L^\infty}+\norm{(u^\sha-u^r)_x}_{L^\infty}\right)\norm{\p_x\phit}^2\lesssim(\delta_r\epsilon+\nu e^{-\alpha t})\norm{\p_x\phit}^2,
\end{align*}
\begin{align*}
	\abs{\textbf{I}_5}\lesssim (1+\nu e^{-\alpha t})\norm{\p_x^2\phi^r}\norm{\phit_x}\lesssim \eta\norm{\phit_x}^2+\epsilon C_\eta(1+t)^{-2}.
\end{align*}
%where we have used
%	\begin{equation}
%		\label{lem-rare-prop-phi}
%		\begin{aligned}
%			\norm{\p_x^2 \phi^r}^2&\lesssim \norm{\p_x^2 n^r}^2 +\norm{\p_x n^r(t)}_{L^4}^4 \\
%			&\lesssim  \norm{\p_x^2 n^r}^2 +\norm{\p_x n^r}^3 \norm{\p_x^2 n^r} \\
%			&\lesssim  \epsilon (1+t)^{-2} + \epsilon^{\frac{1}{2}} \cdot ((1+t)^{-\frac{1}{2}})^2  \cdot \epsilon^{\frac{1}{2}} (1+t)^{-1} \\
%			&\lesssim  \epsilon (1+t)^{-2}.
%		\end{aligned}
%\end{equation}
By using \eqref{def-ansatz-rare}, $\phi^r=-\ln n^r$ and Lemma \ref{lem-rare-prop}, it is easy to obtain that 
\begin{align*}
	\norm{\p_t\left(e^{-\phi^\sha}-n^r\right)}_{L^\infty}+\norm{e^{-\phi^\sha}-n^r}_{W^{1,\infty}}\lesssim\nu e^{-\alpha t}.
\end{align*}
Thus, we can obtain that
\begin{align*}
	\abs{\textbf{I}_6}\lesssim\abs{(\p_t(e^{-\phi^\sha}-n^r)+\p_x((e^{-\phi^\sha}-n^r)u^\sha+n^r(u^\sha-u^r)),\frac{1}{2}\phit^2+\phit^3)}\lesssim \nu e^{-\alpha t}\norm{\phit}^2.
\end{align*}
For the problem of the stability of rarefaction waves, $k_i$ $(i=1,2,3)$ are in the form of $\sR=\sR_{\rm Ti} \sR_{\rm Sp}$ (see \eqref{eqn-error-rare}), and $\sR_{\rm Ti}$ are the periodic functions and their derivatives, and $\sR_{\rm Sp}$ are the derivatives of the background rarefaction waves. Therefore, it follows from Lemma \ref{Lem-periodic} and \ref{lem-rare-prop} that 
\begin{equation}\label{ki-est}
	\norm{k_i}_{H^2} \lesssim (\norm{\p_x^2 \sR_{\rm Ti} }_{L^\infty} \norm{\sR_{\rm Sp}}+\cdots + \norm{ \sR_{\rm Ti} }_{L^\infty} \norm{\p_x^2\sR_{\rm Sp}} )\lesssim\nu e^{-\alpha t}.
\end{equation}
Then, by using \eqref{n-bdd}, \eqref{u-bdd},  and the Young inequality, we have
\begin{align*}
	\abs{\textbf{I}_{7}}\lesssim\nu e^{-\alpha t}\norm{[\nt,\ut,\phit,\phit_x]}^2+\nu e^{-\alpha t}.
\end{align*}
Note that $\Lambda(n,n^\sha)\sim|\nt|^2$ if $|\nt|$ is bounded,  $\p_xu^r>0$ and 
\begin{align*}
	(e^{-\phi^\sha},1-e^{-\phit}-\phit e^{-\phit})&=(n^r,1-e^{-\phit}-\phit e^{-\phit})+(e^{\phi^\sha}-n^r,1-e^{-\phit}-\phit e^{-\phit})\\
	&\gtrsim(n^r-\nu e^{-\alpha t})\|\phit\|^2
	\gtrsim\frac{\nb^-}{2}\|\phit\|^2,
\end{align*}
for any $\|\phit\|_{L^\infty}\leq C$ and $\nu$ sufficiently small. Then, integrating \eqref{0th-rare-1} in time over $[0,t]$, and plugging into estimates of $I_i (i=1,\cdots,7)$ give that for small enough $\eta$,
\begin{equation}
	\label{ineq-0th-rare'}
	\begin{aligned}
		&\sup\limits_{t\in[0,T]} \norm{[\ut, \nt,\phit,\phit_x]}^2+
		\int_{0}^{T} \norm{ [\sqrt{u^r_x}\ut, \ut_x] }^2 \, dt  \\
		&
		\lesssim  \norm{[\nt_0,\ut_0]}^2 + \norm{ \phit_0}_{H^1}^2 +  	\int_{0}^{T}\nu e^{-\alpha t}\norm{[\nt,\ut,\phit]}^2 \, dt+  \int_{0}^{T}\epsilon^\frac{\theta}{1+\theta}(1+t)^{-\frac{3+5\theta}{2(1+3\theta)}}\norm{\ut}^2\,dt\\
		&\qquad+\epsilon^\frac{\theta}{1+\theta}	\int_{0}^{T}  \norm{\ut_x}^2 \, dt+(\epsilon+\nu+\eta)\int_{0}^{T}\norm{\phit_x}^2\,dt+\epsilon^\frac{\theta}{1+\theta} +\nu.
	\end{aligned}
\end{equation}
Therefore, \eqref{ineq-0th-rare} follows with small $\eta$, $\nu$ and $\epsilon$.
\end{proof}

\begin{lemma}
	\label{lem-1st-rare}
	Under the assumptions of Proposition \ref{prop-apriori-rare}, there exist positive constants $\nu_0>0$, $\varepsilon_0>0$, $0<\theta<1$ independent of $T$ such that if $\nu<\nu_0$, and \eqref{eqn-assum-rare} holds, then
	\begin{equation}
		\label{ineq-1st-rare}
		\begin{aligned}
			&\sup\limits_{t\in[0,T]} \norm{\nt_x}^2+
			\int_{0}^{T} \norm{ [\nt_x, \phit_x, \p_x^2\phit] }^2 \, dt  \\
			&\qquad\qquad
			\lesssim  \norm{[\ut_0]}^2 + \norm{ [\nt_0,\phit_0]}_{H^1}^2  
			+  \varepsilon \int_{0}^{T} \norm{  \p_x^2\ut}^2 \, dt 
			+ \epsilon^\frac{\theta}{1+\theta} + \nu .
		\end{aligned}
	\end{equation}
\end{lemma}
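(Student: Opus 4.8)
The plan is to mirror the first-order estimate for the shock profile (Lemma~\ref{lem-2nd-shock}): the dissipation of $\nt_x$ is extracted from the pressure term in the momentum equation, while the coupling with the potential is closed by testing the Poisson equation at the level of second derivatives. The essential new point, compared with the shock case, is that here no control of $\int_0^T\norm{\nt}^2\,dt$ is available (there is no anti-derivative structure), so the $\phit_x$--$\nt_x$ coupling must be handled by an \emph{exact cancellation} rather than by an elliptic bound of the type $\norm{\phit}_{H^2}\lesssim\norm{\nt}$.

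First I would rewrite \subeqref{eqn-pert-rare}{1} in the nonconservative form
\begin{equation*}
\nt_t+n\ut_x+u\nt_x+u^\sha_x\nt+n^\sha_x\ut=k_1,
\end{equation*}
and solve \subeqref{eqn-pert-rare}{2} for the leading viscous term, writing $\p_x^2\ut/n$ as the sum of $\ut_t$, the convective and pressure terms $A\,\p_x n/n$, the electric force $-\phit_x$, the background viscous contributions $\p_x^2u^\sha(\tfrac1n-\tfrac1{n^\sha})-\p_x^2u^r/n^r$, and the error $-k_2$. Differentiating the continuity equation in $x$ and pairing with $\nt_x/n^2$ produces $\tfrac12\tfrac{d}{dt}(\nt_x/n^2,\nt_x)$ together with the troublesome term $(\p_x^2\ut/n,\nt_x)$; substituting the momentum equation for $\p_x^2\ut/n$ turns the latter into the good dissipation $A(\nt_x/n,\nt_x)\geq \tfrac{A}{2\nb^+}\norm{\nt_x}^2$ (using $\p_x n=\p_x n^\sha+\nt_x$ and \eqref{n-bdd}), the energy term $\tfrac{d}{dt}(\ut,\nt_x)$ arising from $(\ut_t,\nt_x)=\tfrac{d}{dt}(\ut,\nt_x)+(\ut_x,\nt_t)$, a cross term $+(\phit_x,\nt_x)$, and remainders. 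The factor $(\ut_x,\nt_t)$ is re-expanded through the continuity equation, which contributes only $\norm{\ut_x}^2$, $\norm{\nt_x}^2$ (with a small coefficient), and $\sR$-type quantities.

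The main obstacle, and the heart of the argument, is the cross term $+(\phit_x,\nt_x)$. I would not estimate it directly; instead I test \subeqref{eqn-pert-rare}{3} against $\p_x^2\phit$. Integrating $(\nt,\p_x^2\phit)=-(\nt_x,\phit_x)$ by parts, using $e^{-\phi^\sha}\geq c>0$ and $e^{-\phi^\sha}(1-e^{-\phit})=e^{-\phi^\sha}\phit+\O(\phit^2)$, yields the identity
\begin{equation*}
\norm{\p_x^2\phit}^2+\big(e^{-\phi^\sha},\phit_x^2\big)=-(\nt_x,\phit_x)+\text{(controllable terms)}.
\end{equation*}
Adding this to the $\nt_x$ identity obtained above, the two cross terms $+(\phit_x,\nt_x)$ and $-(\nt_x,\phit_x)$ cancel exactly, leaving
\begin{equation*}
\frac{d}{dt}\Big[\tfrac12\big(\tfrac{\nt_x}{n^2},\nt_x\big)+(\ut,\nt_x)\Big]+A\big(\tfrac{\nt_x}{n},\nt_x\big)+\norm{\p_x^2\phit}^2+\big(e^{-\phi^\sha},\phit_x^2\big)\lesssim\text{(controllable terms)},
\end{equation*}
which simultaneously yields the dissipation of $\nt_x$, $\phit_x$ and $\p_x^2\phit$. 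The energy functional is comparable to $\norm{\nt_x}^2$ up to $\norm{\ut}^2$, the latter being controlled by Lemma~\ref{lem-0th-rare}.

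Finally I would bound the right-hand side. The background terms $\p_x^2u^r$ and $\p_x^2\phi^r$ are handled, exactly as for the term $\textbf{I}_2$ in the proof of Lemma~\ref{lem-0th-rare}, by the $L^p$-decay of Lemma~\ref{lem-rare-prop} together with interpolation, producing the $\epsilon^{\theta/(1+\theta)}$ contribution; the error terms $k_i,\p_x k_i$, being of the form $\sR=\sR_{\rm Ti}\sR_{\rm Sp}$ with $\norm{k_i}_{H^2}\lesssim\nu e^{-\alpha t}$ by \eqref{eqn-error-rare} and \eqref{ki-est}, contribute the $\nu$. The cubic terms such as $(\ut_x\nt_x/n^2,\nt_x)$ are controlled by Sobolev embedding and \eqref{eqn-assum-rare} as $\varepsilon\big(\norm{\ut_x}_{H^1}^2+\norm{\nt_x}^2\big)$, which is exactly where $\varepsilon\int_0^T\norm{\p_x^2\ut}^2\,dt$ enters \eqref{ineq-1st-rare}. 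Integrating over $[0,\tau]$, invoking Lemma~\ref{lem-0th-rare} for $\int_0^T\norm{\ut_x}^2\,dt$ and $\sup_t\norm{[\ut,\nt,\phit,\phit_x]}^2$, choosing $\nu_0,\varepsilon_0$ and the Young parameters small to absorb the small-coefficient dissipations, and taking the supremum over $\tau\in[0,T]$, I would obtain \eqref{ineq-1st-rare}. Once the cancellation of the $\nt_x$--$\phit_x$ cross terms is identified, the remaining computation is routine, if lengthy.
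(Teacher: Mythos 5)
Your proposal is essentially the paper's own proof: the paper combines exactly the same three identities (the $x$-differentiated continuity equation tested against $\nt_x/n^2$, the momentum equation tested against $\nt_x$ --- your substitution for $\p_x^2\ut/n$ is the same algebra --- and the Poisson equation at second-derivative level, which the paper realizes by differentiating it in $x$ and testing against $\phit_x$, equivalent to your pairing with $\p_x^2\phit$ after one integration by parts), obtaining the same cancellation of the $\nt_x$--$\phit_x$ cross terms, the same energy $\tfrac12(\nt_x/n^2,\nt_x)+(\ut,\nt_x)$ with dissipation $A(n^{-1},\nt_x^2)+\norm{\p_x^2\phit}^2+(e^{-\phi^\sha}e^{-\phit}\phit_x,\phit_x)$, the same $\varepsilon\int_0^T\norm{\p_x^2\ut}^2dt$ from the cubic term, and the same closing via Lemma \ref{lem-0th-rare}. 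One peripheral slip: the linear background terms $(\p_x^2u^r/n^r,\nt_x)$ and $(\p_x^3\phi^r,\phit_x)$ cannot literally be treated ``exactly as $\textbf{I}_2$'' by $L^{1+\theta}$-interpolation (that would require $\norm{\nt_x}_{L^\infty}$, which is not controlled under \eqref{eqn-assum-rare}), but they do not need it --- since the $\norm{\nt_x}^2$ and $\norm{\phit_x}^2$ dissipations are now available, Young's inequality together with the $L^2$-decay $\norm{\p_x^2[u^r,\phi^r]}^2\lesssim\epsilon(1+t)^{-2}$ from Lemma \ref{lem-rare-prop} suffices (as in the paper), and the $\epsilon^{\theta/(1+\theta)}$ in \eqref{ineq-1st-rare} simply propagates from invoking Lemma \ref{lem-0th-rare}.
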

\begin{proof}
	Differentiating \subeqref{eqn-pert-rare}{1} and \subeqref{eqn-pert-rare}{3} in $x$, taking the inner products of the resultant  with $\nt_x/n^2$ and $\phit_x$ with respect to $x$ respectively, and using integration by parts yield that
\begin{align}
	\label{1st-rare-1}
		&\frac{1}{2}\frac{d}{dt}(\nt_x^2,n^{-2})+\frac{1}{2}\left(u^\sha_x,\frac{\nt_x^2}{n^2}\right)+(\p_x^2\phit,\p_x^2\phit)+(e^{-\phi^\sha}e^{-\phit}\phit_x,\phit_x)+\left(\p_x^2\ut,\frac{\nt_x}{n}\right)+(\nt_x,\phit_x)\nonumber\\
		&=-\frac{1}{2}\left(\frac{\ut_x}{n^2},\nt_x^2\right)-\left(\p_x^2u^\sha\nt+\p_x^2n^\sha\ut+2n^\sha_x\ut_x,\frac{\nt_x}{n^2}\right)+\left(\p_xk_1,\frac{\nt_x}{n^2}\right)\\
		&\quad-\left(\p_x(e^{-\phi^\sha})(1-e^{-\phit}),\p_x\phit\right)+(\p_x^3\phi^r,\phit_x)-(\p_xk_3,\phit_x),\nonumber
\end{align} 
where we have used 
\begin{align*}
	-(\nt_x^2,n^{-3}\p_tn)-(u_x\nt_x+u\p_x^2\nt+n_x\ut_x,\frac{\nt_x}{n^2})=\frac{1}{2}\left(u^\sha_x,\frac{\nt_x}{n^2}\right)-\frac{1}{2}\left(\frac{\ut_x}{n^2},\nt_x^2\right)-(n^\sha_x\ut_x,\frac{\nt_x}{n^2}) .
\end{align*}
To cancel the last two terms on the left hand side of \eqref{1st-rare-1}, we take the inner product of \subeqref{eqn-pert-rare}{2} with $\nt_x$ and sum with \eqref{1st-rare-1} to obtain that
\begin{align}
	\label{1st-rare-2}
	&\frac{d}{dt}(\ut,\nt_x)+\frac{1}{2}\frac{d}{dt}(\nt_x^2,n^{-2})+\frac{1}{2}\left(u^\sha_x,\frac{\nt_x^2}{n^2}\right)+(\p_x^2\phit,\p_x^2\phit)+(e^{-\phi^\sha}e^{-\phit}\phit_x,\phit_x)+A(n^{-1},\nt_x^2)\nonumber\\
	&=-\underbrace{\frac{1}{2}\left(\frac{\ut_x}{n^2},\nt_x^2\right)}_{\textbf{J}_1}-\underbrace{\left(\p_x^2u^\sha\nt+\p_x^2n^\sha\ut+2n^\sha_x\ut_x,\frac{\nt_x}{n^2}\right)}_{\textbf{J}_2}+\underbrace{(\ut,\p_t\nt_x)-(uu_x-u^\sha u^\sha_x,\nt_x)}_{\textbf{J}_3}\nonumber\\
	&\quad+\underbrace{A\left(\frac{\nt n^\sha_x}{nn^\sha},\nt_x\right)-\left(\frac{\nt\p_x^2u^\sha}{nn^\sha},\nt_x\right)+\left(\frac{\p_x^2u^r}{n^r},\nt_x\right)}_{\textbf{J}_4}-\underbrace{\left(\p_x(e^{-\phi^\sha})(1-e^{-\phit}),\p_x\phit\right)}_{\textbf{J}_5}\\
	&\quad+\underbrace{(\p_x^3\phi^r,\phit_x)}_{\textbf{J}_6}+\underbrace{\left(\p_xk_1,\frac{\nt_x}{n^2}\right)+(k_2,\nt_x)-(\p_xk_3,\phit_x)}_{\textbf{J}_7}.\nonumber
\end{align}
The terms $\textbf{J}_1,\cdots,\textbf{J}_7$ can be bounded as follows. It follows from the Young inequality, Sobolev embedding and \eqref{eqn-assum-rare} that 
\begin{align*}
	\abs{\textbf{J}_1}&\lesssim\eta\norm{\nt_x}^2+C_\eta\norm{\nt_x}^2\norm{\ut_x}_{L^\infty}^2\lesssim(\eta+C_\eta\varepsilon^2)\norm{\nt_x}^2+C_\eta\varepsilon^2\norm{\p_x^2\ut}^2.
\end{align*}
By using \eqref{n-u-phi-bdd} and  Lemma \ref{lem-rare-prop}, one has
\begin{align*}
	\abs{\textbf{J}_2}&\lesssim\eta\norm{\nt_x}^2+C_\eta\norm{n^\sha_x}_{L^\infty}^2\norm{\ut_x}^2+C_\eta(\norm{\p_x^2u^\sha}_{L^\infty}^2\norm{\nt}^2+\norm{\p_x^2n^\sha}_{L^\infty}^2\norm{\ut}^2)\\
	&\lesssim\eta\norm{\nt_x}^2+C_\eta(\delta^2\epsilon+\nu e^{-\alpha t})\norm{\ut_x}^2+C_\eta(\epsilon^2+\nu)(1+t)^{-2}\|[\nt,\ut]\|^2.
\end{align*}
To estimate $\textbf{J}_3$, we utilize integration by parts and \subeqref{eqn-pert-rare}{1} to derive that
\begin{align*}
\textbf{J}_3&=(\ut_x,\p_x(nu-n^\sha u^\sha))-(\ut_x,k_1)-(uu_x-u^\sha u^\sha_x,\nt_x)\\
	&=(n,\ut_x^2)+(\ut_x,\nt u^\sha_x)+(\ut_x,n^\sha_x\ut)-(\ut,u^\sha_x\nt_x)-(\ut_x,k_1),
\end{align*}
so that we can obtain from \eqref{n-bdd} that
\begin{align*}
	\abs{\textbf{J}_3}&\lesssim\|n\|_{L^\infty}\|\ut_x\|^2+\eta\|[\nt_x,\ut_x]\|^2+C_\eta(\|u^\sha_x\|_{\infty}^2\|\nt\|^2+\|[n^\sha_x,u^\sha_x]\|_{L^\infty}^2\|\ut\|^2+\|k_1\|^2)\\
	&\lesssim\eta\|\nt_x\|^2+(\eta+\bar{n}^+)\norm{\ut_x}^2+C_\eta(1+t)^{-2}\|[\nt,\ut]\|^2+C_\eta\nu e^{-\alpha t}.
\end{align*}
In view of \eqref{n-u-phi-bdd} and Lemma \ref{lem-rare-prop}, it is easy to obtain that
\begin{align*}
		\abs{\textbf{J}_4}&\lesssim\eta\|\nt_x\|^2+C_\eta\|\nt\|^2(\|n^\sha_x\|_{L^\infty}^2+\|\p_x^2u^\sha\|_{L^\infty}^2)+C_\eta\|\p_x^2u^r\|^2\\
	&\lesssim\eta\|\nt_x\|^2+C_\eta(1+t)^{-2}\|\nt\|^2+C_\eta\epsilon(1+t)^{-2},
\end{align*}
\begin{align*}
	\abs{\textbf{J}_5}&\lesssim\eta\|\phit_x\|^2+C_\eta(\|\p_xn^r\|_{L^\infty}^2+\|\p_x(e^{-\phi^\sha}-n^r)\|_{L^\infty}^2)\|\phit\|^2\lesssim\eta\|\phit_x\|^2+C_\eta(1+t)^{-2}\|\phit\|^2,
\end{align*}
and
\begin{align*}
	\abs{\textbf{J}_6}&\lesssim\eta\|\phit_x\|^2+C_\eta\|\p_x^3\phi^r\|^2\lesssim\eta\|\phit_x\|^2+C_\eta\epsilon^3(1+t)^{-2}.
\end{align*} 
By using \eqref{ki-est}, we have
\begin{align*}
	\abs{\textbf{J}_7}&\lesssim\eta\|[\nt_x,\phit_x]\|^2+C_\eta\|[\p_xk_1,k_2,\p_xk_3]\|^2\lesssim\eta\|[\nt_x,\phit_x]\|^2+C_\eta\nu e^{-\alpha t}.
\end{align*}
Note that
\begin{align*}
	\left|\int_{0}^{T}\frac{d}{dt}(\ut,\nt_x)\right|\lesssim \|\ut_0\|^2+\|\nt_{0x}\|^2+\eta\|\nt_x(T)\|^2+C_\eta\|\ut(T)\|^2,
\end{align*}
and $\p_x u^r >0$, we have
\begin{align*}
	\left(u^\sha_x,\frac{\nt_x^2}{n^2}\right)=\left(u^r_x,\frac{\nt_x^2}{n^2}\right)+\left(u^\sha_x-u^r_x,\frac{\nt_x^2}{n^2}\right) \gtrsim -\nu e^{-\alpha t} \|\nt_x\|^2,
\end{align*}
\begin{align*}
	(e^{-\phi^\sha}e^{-\phit}\phit_x,\phit_x)=(n^re^{-\phit}\phit_x,\phit_x)+((e^{-\phi^\sha}-n^r)e^{-\phit}\phit_x,\phit_x)\gtrsim (1-\nu e^{-\alpha t})\|\phit_x\|^2.
\end{align*} 
Then, by integrating \eqref{1st-rare-2} with respect to time on $[0,t]$ and plugging estimates of $\textbf{J}_i (i=1,\cdots,7)$, we have that for small enough $\eta$,
\begin{equation}
	\label{ineq-1st-rare'}
	\begin{aligned}
		&\sup\limits_{t\in[0,T]} \norm{\nt_x}^2+
		\int_{0}^{T} \norm{ [\nt_x, \phit_x, \p_x^2\phit] }^2 \, dt  \\
		&\qquad
		\lesssim  \norm{[\ut_0]}^2 + \norm{ [\nt_0,\phit_0]}_{H^1}^2   
		+\sup\limits_{t\in[0,T]} \norm{\ut}^2
		+ \int_{0}^{T} (1+t)^{-2} \norm{ [\nt,\ut, \phit] }^2 \, dt   \\
		&\qquad\quad
		+  \int_{0}^{T} \norm{ \p_x\ut }^2 \, dt
		+ \nu \int_{0}^{T} \norm{ [\p_x \nt, \p_x \phit] }^2 \, dt 
		+  \varepsilon\int_{0}^{T} \norm{ [\p_x \nt, \p_x^2\ut]}^2 \, dt 
		+ \epsilon + \nu .
	\end{aligned}
\end{equation}
Therefore, \eqref{ineq-1st-rare} follows by using \eqref{ineq-0th-rare} with small $\eta$, $\nu$ and $\varepsilon$.
\end{proof}
\begin{lemma}
	\label{lem-2nd-rare}
	Under the assumptions of Proposition \ref{prop-apriori-rare}, there exist positive constants $\nu_0>0$, $\varepsilon_0>0$, $0<\theta<1$ independent of $T$ such that if $\nu<\nu_0$, and \eqref{eqn-assum-rare} holds, then
	\begin{equation}
		\label{ineq-2nd-rare}
		\begin{aligned}
			&\sup\limits_{t\in[0,T]} \norm{\ut_x}^2+
			\int_{0}^{T} \norm{ \p_x^2\ut }^2 \, dt  
			\lesssim  \norm{[\nt_0, \ut_0, \phit_0]}_{H^1}^2
			+ \epsilon^\frac{\theta}{1+\theta}  + \nu .
		\end{aligned}
	\end{equation}
\end{lemma}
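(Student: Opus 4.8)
The plan is to obtain the top-order estimate for the velocity by testing the momentum equation \subeqref{eqn-pert-rare}{2} against $-\p_x^2\ut$ in $L^2(\R)$. First I would isolate the dissipation by splitting the viscous term through $\p_x^2 u=\p_x^2 u^\sha+\p_x^2\ut$, writing
$$\frac{\p_x^2 u}{n}-\frac{\p_x^2 u^\sha}{n^\sha}+\frac{\p_x^2 u^r}{n^r}=\frac{\p_x^2\ut}{n}-\frac{\p_x^2 u^\sha}{nn^\sha}\nt+\frac{\p_x^2 u^r}{n^r},$$
so that only $\frac{\p_x^2\ut}{n}$ carries the viscous dissipation and the other two pieces become source terms. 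Testing against $-\p_x^2\ut$ then produces $\tfrac12\frac{d}{dt}\norm{\ut_x}^2$ (after one integration by parts in the $\p_t\ut$ term) on the left, together with the dissipation $(\frac{1}{n}\p_x^2\ut,\p_x^2\ut)\gtrsim\norm{\p_x^2\ut}^2$ obtained by moving $\frac{\p_x^2\ut}{n}$ to the left and using the uniform bound \eqref{n-bdd} on $n$.

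Next I would estimate the right-hand side term by term with Young's inequality, peeling off a small multiple $\eta\norm{\p_x^2\ut}^2$ from each and controlling the remainder. Writing the convection as $u\p_x u-u^\sha\p_x u^\sha=u^\sha\ut_x+\ut\,\p_x u^\sha+\ut\ut_x$, the only delicate piece is $(u^\sha\ut_x,-\p_x^2\ut)$, whose coefficient $\norm{u^\sha}_{L^\infty}$ is $O(1)$ rather than small (unlike the shock case, where $\usinf$ is $O(\delta)$); I would integrate by parts to rewrite it as $\tfrac12(\p_x u^\sha,\ut_x^2)$, exploiting the smallness $\norm{\p_x u^\sha}_{L^\infty}\lesssim\delta_r\epsilon+\nu e^{-\alpha t}$ from Lemma \ref{lem-rare-prop} and \eqref{n-u-phi-bdd}. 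The pressure contribution reduces to $A\frac{\nt_x}{n}$ plus a lower-order piece and is controlled against $\int_0^T\norm{\nt_x}^2\,dt$ from \eqref{ineq-1st-rare}; the coupling term $-\p_x\phit$ against $\int_0^T\norm{\phit_x}^2\,dt$; the inhomogeneities $\frac{\p_x^2 u^\sha}{nn^\sha}\nt$ and $\frac{\p_x^2 u^r}{n^r}$ are handled by the $L^p$-decay of Lemma \ref{lem-rare-prop}, whose $\min\{\cdot,\cdot\}$ structure yields $\int_0^T\norm{\p_x^2 u^r}^2\,dt\lesssim\delta_r\epsilon^2$ and $\int_0^T\norm{\p_x^2 u^r}_{L^\infty}^2\,dt\lesssim\delta_r\epsilon^3$; and $k_2$ is absorbed using $\norm{k_2}\lesssim\nu e^{-\alpha t}$ from \eqref{eqn-error-rare} and \eqref{ki-est}.

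Finally I would choose $\eta$ small to absorb the $\eta\norm{\p_x^2\ut}^2$ terms into the dissipation, integrate over $[0,T]$, and invoke the already-established lower-order bounds \eqref{ineq-0th-rare} and \eqref{ineq-1st-rare} to replace $\int_0^T(\norm{\ut_x}^2+\norm{\nt_x}^2+\norm{\phit_x}^2)\,dt$ by data and $\epsilon$-terms. The one genuinely circular point is that \eqref{ineq-1st-rare} itself carries $\varepsilon\int_0^T\norm{\p_x^2\ut}^2\,dt$ on its right-hand side; this is reabsorbed into the left-hand dissipation for $\varepsilon$ small, which is exactly where the smallness hypothesis \eqref{eqn-assum-rare} is used. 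I expect the main obstacle to be the top-order convection term: one must ensure that the $O(1)$-coefficient piece $(u^\sha\ut_x,\p_x^2\ut)$ does not force a Gr\"onwall factor growing in $T$, which is avoided because its time integral is dominated by $\int_0^T\norm{\ut_x}^2\,dt$, already bounded uniformly in $T$ by \eqref{ineq-0th-rare}. The secondary difficulty is verifying the time-integrability, with the correct powers of $\epsilon$, of the rarefaction-wave source terms through the $\min$-type decay of Lemma \ref{lem-rare-prop}, which is what ultimately produces the $\epsilon^{\frac{\theta}{1+\theta}}$ contribution in \eqref{ineq-2nd-rare}.
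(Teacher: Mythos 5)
Correct, and essentially the paper's own argument: testing \subeqref{eqn-pert-rare}{2} against $-\p_x^2\ut$ is, after one integration by parts, identical to the paper's step of differentiating the equation in $x$ and testing against $\ut_x$, and your viscous-term splitting $\frac{\p_x^2\ut}{n}-\frac{\p_x^2u^\sha}{nn^\sha}\nt+\frac{\p_x^2u^r}{n^r}$, your treatment of the pressure, potential and $k_2$ terms, and your absorption (for small $\varepsilon$) of the $\varepsilon\int_0^T\norm{\p_x^2\ut}^2\,dt$ term inherited from \eqref{ineq-1st-rare} all coincide with the paper's estimates of $\textbf{K}_1$--$\textbf{K}_5$. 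The only cosmetic difference is the $O(1)$-coefficient convection piece: you control $\tfrac12(\p_xu^\sha,\ut_x^2)$ via the smallness $\norm{\p_xu^r}_{L^\infty}\lesssim\delta_r\epsilon$ from Lemma \ref{lem-rare-prop}, whereas the paper keeps $\tfrac32(u^\sha_x,(\ut_x)^2)$ on the left and uses the sign $u^r_x>0$ together with $\norm{(u^\sha-u^r)_x}_{L^\infty}\lesssim\nu e^{-\alpha t}$; both are valid.
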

\begin{proof}
	Differentiating \subeqref{eqn-pert-rare}{2} in $x$, taking inner products of the resultant with $\p_x\ut$ with respect to $x$, and using integration by parts yield that
\begin{equation}\label{2nd-rare-1}
		\begin{aligned}
		&\frac{1}{2}\frac{d}{d}(\ut_x,\ut_x)+\frac{3}{2}(u^\sha_x,(\ut_x)^2)+\left(\frac{\p_x^2\ut}{n},\p_x^2\ut\right)\\
		&=\underbrace{(\ut\p_x^2\ut,\ut_x)-(\ut\p_x^2u^\sha,\ut_x)}_{\textbf{K}_1}-\underbrace{A\left(\p_x\left(\frac{n_x}{n}-\frac{n^\sha_x}{n^\sha}\right),\ut_x\right)}_{\textbf{K}_2}\\
		&\quad+\underbrace{\left(\p_x^2\phit,\ut_x\right)}_{\textbf{K}_3}+\underbrace{\left(\frac{\p_x^2u^\sha \nt}{nn^\sha},\p_x^2\ut\right)}_{\textbf{K}_4}\underbrace{-\left(\frac{\p_x^2n^r}{n^r},\p_x^2\ut\right)-(k_2,\p_x^2\ut)}_{\textbf{K}_5} ,
	\end{aligned}
\end{equation}
where we have used 
\begin{align*}
	(\p_x(uu_x-u^\sha u^\sha_x),\ut_x)&=(\p_x(\ut\ut_x+u^\sha\ut_x+\ut u^\sha_x),\ut_x)\\
	&=\frac{3}{2}(u^\sha_x,(\ut_x)^2)-(\ut\p_x^2\ut,\ut_x)+(\ut\p_x^2u^\sha,\ut_x),
\end{align*}
and
\begin{align*}
	\left(\p_x\left(\frac{\p_x^2u}{n}-\frac{\p_x^2u^\sha}{n}\right),\p_x\ut\right)=-\left(\frac{\p_x^2u}{n}-\frac{\p_x^2u^\sha}{n},\p_x^2\ut\right)=-\left(\frac{\p_x^2\ut}{n},\p_x^2\ut\right)+\left(\frac{\p_x^2u^\sha \nt}{nn^\sha},\p_x^2\ut\right).
\end{align*}
It is obvious from \eqref{n-u-phi-bdd} and Lemma \ref{lem-rare-prop} to get
\begin{align*}
	\abs{\textbf{K}_1}&\lesssim\eta(\norm{\ut_x}^2+\norm{\p_x^2\ut}^2)+C_\eta\norm{\ut}_{\infty}^2(\norm{\ut_x}^2+\norm{\p_x^2u^\sha}^2)\\
	&\lesssim\eta\norm{\p_x^2\ut}^2+(\eta+C_\eta\norm{\ut}_{H^1}^2)\norm{\ut_x}^2+C_\eta\norm{\ut}_{H^1}^2(\epsilon(1+t)^{-2}+\nu e^{-\alpha t}),\\
	\abs{\textbf{K}_3}&\lesssim\norm{\ut_x}^2+\norm{\p_x^2\phit},\\
	\abs{\textbf{K}_4}&\lesssim\eta\norm{\p_x^2\ut}^2+C_\eta\|\nt\|_{L^\infty}^2\|\p_x^2u^\sha\|^2\lesssim\eta\norm{\p_x^2\ut}^2+C_\eta\norm{\nt}_{H^1}^2(\epsilon(1+t)^{-2}+\nu e^{-\alpha t}),\\
	\abs{\textbf{K}_5}&\lesssim\eta\norm{\p_x^2\ut}^2+C_\eta(\norm{\p_x^2n^r}^2+\norm{k_2}^2)\lesssim\eta\norm{\p_x^2\ut}^2+C_\eta
	(\epsilon(1+t)^{-2}+\nu e^{-\alpha t}).
\end{align*}
For $\textbf{K}_2$, it follows from integration by parts that
\begin{align*}
	\textbf{K}_2&=-A\left(\frac{\nt_x}{n}-\frac{n^\sha_x\nt}{nn^\sha},\p_x^2\ut\right)=-A\left(\frac{\p_x\nt}{n},\p_x^2\ut\right)-A\left(\p_x\left(\frac{\p_xn^\sha\nt}{nn^\sha}\right),\p_x\ut\right)\\
	&=-A\left(\frac{\nt_x}{n},\p_x^2\ut\right)-A\left(\frac{\p_x^2n^\sha\nt}{nn^\sha},\p_x\ut\right)-A\left(\frac{n^\sha_x\p_x\nt}{nn^\sha},\ut_x\right)\\
	&\quad+A\left(\frac{(n^\sha_x)^2\nt}{n(n^\sha)^2},\p_x\ut\right)+A\left(\frac{n^\sha_x\nt_x\nt}{n^2n^\sha},\ut_x\right)+A\left(\frac{(n^\sha_x)^2\nt}{n^2n^\sha},\ut_x\right).
\end{align*}
Then, 
\begin{align*}
	\abs{\textbf{K}_2}\lesssim&\eta(\norm{\p_x^2\ut}^2+\norm{\p_x\ut}^2) +C_\eta(\norm{\p_x\nt}^2+\norm{\nt}_{L^\infty}^2\norm{\p_x^2u^\sha}^2 +\norm{\p_xn^\sha}_{L^\infty}^2\norm{\p_x\ut}^2)\\
	&+C_\eta(\norm{\nt}_{L^\infty}^2\norm{\p_xn^\sha}_{L^\infty}^2\norm{\p_xn^\sha}_{L^2}^2+\norm{\nt}_{L^\infty}^2\norm{\p_xn^\sha}_{L^\infty}^2\norm{\p_x\nt}^2)\\
	\lesssim&\eta(\norm{\p_x^2\ut}^2+\norm{\ut_x}^2)+C_\eta\norm{\nt_x}^2+C_\eta\norm{[\nt,\ut]}_{H^1}^2(\epsilon(1+t)^{-2}+\nu e^{-\alpha t}).
\end{align*}
Note that
\begin{align*}
	\frac{3}{2}(u^\sha_x,(\ut_x)^2)=\frac{3}{2}(u^r_x,(\ut_x)^2)+\frac{3}{2}((u^\sha-u^r)_x,(\ut_x)^2)\gtrsim -\nu e^{-\alpha t}\|\p_x\ut\|^2.
\end{align*}
Then, by integrating \eqref{2nd-rare-1} with respect to time on $[0,t]$ and plugging estimates of $\textbf{K}_i (i=1,\cdots,7)$, we have that for small enough $\eta$,
\begin{equation}
	\label{ineq-2nd-rare'}
	\begin{aligned}
		&\sup\limits_{t\in[0,T]} \norm{\ut_x}^2+
		\int_{0}^{T} \norm{ \p_x^2\ut }^2 \, dt  \\
		&\qquad
		\lesssim  \norm{[\nt_0, \ut_0, \phit_0]}_{H^1}^2
		+ \int_{0}^{T} (\epsilon(1+t)^{-2}+\nu e^{-\alpha t}) \norm{ [\nt,\ut] }_{H^1}^2 \, dt   \\
		&\qquad\quad
		+  \int_{0}^{T} \norm{ [\nt_x,\ut_x,\p_x^2 \phit] }^2 \, dt
		+ \epsilon + \nu .
	\end{aligned}
\end{equation}
Therefore, \eqref{ineq-2nd-rare} follows by using \eqref{ineq-0th-rare} and \eqref{ineq-1st-rare} with small $\eta$, $\nu$ and $\varepsilon$.
\end{proof}

\begin{lemma}
	\label{lem-phi-rare}
	Under the assumptions of Proposition \ref{prop-apriori-rare}, there exist positive constants $\nu_0>0$, $\varepsilon_0>0$, $0<\theta<1$ independent of $T$ such that if $\nu<\nu_0$, and \eqref{eqn-assum-rare} holds, then
	\begin{equation}
		\label{ineq-phi-rare}
		\begin{aligned}
			\norm{\phit(t)}_{H^1}^2\lesssim\norm{\nt(t)}^2+ \epsilon (1+t)^{-2} +\nu e^{-\alpha t},\quad\forall t\in[0,T].
		\end{aligned}
	\end{equation}
\end{lemma}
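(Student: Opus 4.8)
The plan is to treat \subeqref{eqn-pert-rare}{3} as an elliptic equation for $\phit$ and run a standard coercivity/testing argument. First I would multiply
\[
\p_x^2\phit=\nt+e^{-\phi^\sha}(1-e^{-\phit})-\p_x^2\phi^r+k_3
\]
by $\phit$ and integrate over $\R$. Integration by parts turns the $\p_x^2\phit$ term into $-\norm{\phit_x}^2$, so the identity becomes
\[
\norm{\phit_x}^2+\int_\R e^{-\phi^\sha}(1-e^{-\phit})\,\phit\,dx=-\int_\R\big(\nt-\p_x^2\phi^r+k_3\big)\phit\,dx.
\]

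The key observation is that the second term on the left is coercive. Since $\phi^\sha$ is bounded (because $\phi^r=-\ln n^r$ with $\nb^-<n^r<\nb^+$ by Lemma \ref{lem-rare-prop}, and the periodic parts $\varphi^\pm$ are small by Lemma \ref{Lem-periodic}), one has $e^{-\phi^\sha}\geq c_0>0$; and because $(1-e^{-s})s\geq 0$ for every $s$, with $(1-e^{-s})s\gtrsim s^2$ on the bounded range $|s|\leq\norm{\phit}_{L^\infty}\lesssim\varepsilon_0$, the integrand is pointwise $\gtrsim\phit^2$. Hence the left-hand side controls $\norm{\phit_x}^2+c\norm{\phit}^2\gtrsim\norm{\phit}_{H^1}^2$. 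This sign structure is exactly what allows the reaction term to be exploited directly, without splitting off the nonlinearity.

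For the right-hand side I would apply Young's inequality term by term, retaining a small multiple $\eta\norm{\phit}^2$ to be absorbed into the coercive term and collecting the remaining squares with a constant $C_\eta$. The $\nt$ term contributes $C_\eta\norm{\nt}^2$; the $\p_x^2\phi^r$ term contributes $C_\eta\norm{\p_x^2\phi^r}^2\lesssim\epsilon(1+t)^{-2}$ by the $L^2$ bound in Lemma \ref{lem-rare-prop}(ii) (the $(1+t)$, rather than $t$, owing to the shift $t\mapsto t+1$ in \eqref{wr}); and the $k_3$ term contributes $C_\eta\norm{k_3}^2\lesssim\nu^2 e^{-2\alpha t}\lesssim\nu e^{-\alpha t}$ by \eqref{ki-est}. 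Choosing $\eta$ small enough to absorb the $\eta\norm{\phit}^2$ contributions into $c\norm{\phit}^2$ yields $\norm{\phit}_{H^1}^2\lesssim\norm{\nt}^2+\epsilon(1+t)^{-2}+\nu e^{-\alpha t}$, which is precisely \eqref{ineq-phi-rare}.

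This argument is essentially routine and I do not expect a genuine obstacle, only bookkeeping. The one point demanding a little care is the coercivity of the nonlinear reaction term, that is, verifying both the uniform lower bound $e^{-\phi^\sha}\geq c_0$ and the quadratic lower bound $(1-e^{-\phit})\phit\gtrsim\phit^2$ on the \emph{a priori} bounded range of $\phit$; both follow from the smallness assumption \eqref{eqn-assum-rare} together with the boundedness of $\phi^\sha$. Everything else is Young's inequality combined with the decay estimates for $\p_x^2\phi^r$ and $k_3$ already supplied by Lemma \ref{lem-rare-prop} and \eqref{ki-est}.
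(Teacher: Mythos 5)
Your proof is correct and follows essentially the same route as the paper: the paper's (very terse) proof is exactly this elliptic estimate from \subeqref{eqn-pert-rare}{3}, namely $\norm{\phit}_{H^1}^2\lesssim\norm{\nt}^2+\norm{[\p_x^2\phi^r,k_3]}^2$, combined with Lemma \ref{lem-rare-prop} and \eqref{ki-est}. Your write-up simply supplies the details the paper leaves implicit --- testing with $\phit$, the coercivity of $e^{-\phi^\sha}(1-e^{-\phit})\phit$ under the smallness assumption \eqref{eqn-assum-rare}, and Young's inequality on the source terms --- all of which are handled correctly.
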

\begin{proof}
	The Poisson equation \subeqref{eqn-pert-rare}{3} implies that for any $t\geq 0$
	\begin{align*}
		\norm{\phit(t)}_{H^1}^2&\lesssim \norm{\nt(t)}^2+ \norm{[\p_x^2 \phi^r(t),k_3]}^2\\
		& \lesssim \norm{\nt(t)}^2+ \epsilon (1+t)^{-2} +\nu e^{-\alpha t},
	\end{align*}
    where the estimate of $\norm{\p_x^2 \phi^r}$ is presented in Lemma \ref{lem-rare-prop}.
	Therefore, \eqref{ineq-phi-rare} is proved, and thus in particular, 
	\begin{equation}
		\label{ineq-phi-rare0}
		\norm{\phit(0)}_{H^1}^2\lesssim\norm{\nt_0}^2+ \epsilon +\nu.
	\end{equation}
\end{proof}

Now we will complete the proof the Theorem \ref{thm-rare}.
\begin{proof}[Proof of Proposition \ref{prop-apriori-rare} and Theorem \ref{thm-rare}]
	Proposition \ref{prop-apriori-rare} is proved by combining the estimates \eqref{ineq-0th-rare}, \eqref{ineq-1st-rare}, \eqref{ineq-2nd-rare}, and \eqref{ineq-phi-rare0}. Hence, Theorem \ref{thm-shock} follows from the standard local-existence theory, Proposition \ref{prop-apriori-shock}, (iii) of Lemma \ref{lem-rare-prop}, and the fact of the ansatz
	\begin{equation}
		\norm{[n^\sha,m^\sha,\phi^\sha](\cdot,t) - [n^r,m^r,\phi^r](\cdot/t) }_{L^\infty(\R)} \leq \nu e^{-\alpha t}.
	\end{equation}
\end{proof}

%%%%%%%%%%%%%%%%%%%%%%%%%%%%%%%%%%%%%%%%%%%%%%%%%%%%%
%
%%%%%%%%%%%%%%%%%%%%%%%%%%%%%%%%%%%%%%%%%%%%%%%%%%%%%

\section{Appendix}	
\subsection{Proof of Lemma \ref{Lem-periodic}}
Lemma \ref{Lem-periodic} could be obtained by the estimates \eqref{decay-per} for the solution $[n, m, \phi]$ to \eqref{NSP-Ion} and \eqref{initial-periodic}, and the local existence follows from the standard iteration method, which is omitted here for simplicity.

Thanks to the conservation form of \eqref{initial-periodic}, it holds that 
$$ \int_{0}^{\pi} [n-\nb, m-\mb](x,t) dx =0, \quad \text{for~} t\geq0.$$
It remains to prove the exponential decay the oscillations in \eqref{decay-per}.

Now we define the oscillations as 
\begin{equation}
	\label{def-pert-periodic}
	[\nt,\mt,\ut,\phit]=[n-\nb,m-\mb,u-\ub,\phi-\phib].
\end{equation}
The perturbations $[\nt,\ut,\phit]$ satisfy the same equations as \eqref{eqn-pert-rare}, if replacing both $[n^\sha,u^\sha,\phi^\sha]$ and $[n^r,u^r,\phi^r]$ by $[\nb,\ub,\phib]$ and set $k_1=k_2=k_3=0$:
\begin{equation}
	\label{eqn-pert-periodic}
	\begin{cases}
		\p_t\nt+\p_x(nu)=0,\\
		\p_t\ut+u\p_xu+A\left(\dfrac{\p_xn}{n}-\dfrac{\p_x \nb}{\nb}\right)-\p_x\phit
		=\dfrac{\p_x^2 u}{n},\\
		\p_x^2\phit=\nt+e^{-\phib}(1-e^{-\phit}).
	\end{cases}
\end{equation} 

Only in this subsection, without indications on the sets, the spaces are regarded as the ones of the functions on $(0,\pi)$ and $\norm{\cdot}$ denotes for $L^2$-norm on $(0,\pi)$.

\textbf{Step 1.} Similarly as the basic energy estimates for the perturbation on the rarefaction wave (Lemma \ref{lem-0th-rare}), taking the $L^2$-inner products between  \subeqref{eqn-pert-periodic}{2} and $n\ut$ as well as \subeqref{NSP-Ion}{1} and $\Lambda(n,\nb):=A\int^{n}_{\nb}\frac{s-\nb}{s^2}ds$ on $(0,\pi)$, and summing the resulting equation together, we have
\begin{align}\label{0th-periodic-energy}
	&\frac{d}{dt}\left\{\frac{1}{2}\left(n\ut,\ut\right)+\left(n,\Lambda\right)\right\}-\left(\phit_x,n\ut\right)+(\ut_x,\ut_x)=0,
\end{align} 
where we have used the following identity
\begin{align*}
	(n,-\Lambda_t)+((nu)_x,\Lambda)+A\left(\frac{n_x}{n},n\ut\right)=0.
\end{align*}
For the third term on the left hand side of \eqref{0th-periodic-energy}, we utilize integration by parts, \subeqref{eqn-pert-periodic}{1} and \subeqref{eqn-pert-periodic}{3} to obtain that
\begin{align*}
	&-\left(\phit_x,n\ut\right)=\frac{1}{2}\frac{d}{dt}\left(\phit_x,\phit_x\right)+\left(\phit,\nb\p_t\left(1-e^{-\phit}\right)\right)+\left(\phit,-\left(\nt \ub\right)_x\right),
\end{align*} 
and 
\begin{align*}
	&\quad \left(\phit,\nb\p_t\left(1-e^{-\phit}\right)\right)-\left(\phit,\left(\nt \ub\right)_x\right)
	=\left(\phit,\nb\p_t\left(1-e^{-\phit}\right)\right)+\left(\phit_x,\nt u^\sha\right)\\
	&=\left(\phit,\nb\p_t\left(1-e^{-\phit}\right)\right)-\left(\phit_x\left(1-e^{-\phit}\right),\nb \ub\right)+\ub\left(\phit_x,\p_x^2\phit \right)\\
	&=\frac{d}{dt}\left(\nb,1-e^{-\phit}-\phit e^{-\phit}\right).
\end{align*}  
%	Thus,
%	\begin{align*}
	%		-\left(\phit_x,n\ut\right)=&\frac{1}{2}\frac{d}{dt}\left(\phit_x,\phit_x\right)
	%		+\frac{d}{dt}\left(\nb,1-e^{-\phit}-\phit e^{-\phit}\right).
	%	\end{align*}
Therefore, we obtain that
\begin{equation}\label{0th-periodic-1}
	\frac{d}{dt}\left\{
	\frac{1}{2}\left(n\ut,\ut\right)+\left(n,\Lambda\right)
	+\frac{1}{2}(\phit_x,\phit_x)
	+(\nb,1-e^{-\phit}-\phit e^{-\phit})
	\right\}+(\ut_x,\ut_x)=0.
\end{equation} 
Note that $\Lambda(n,\nb)\sim|\nt|^2$ if $|\nt|$ is bounded, and 
\begin{align*}
	(\nb,1-e^{-\phit}-\phit e^{-\phit})
	&\gtrsim\nb \|\phit\|^2,
\end{align*}
for any $\|\phit\|_{L^\infty}\leq C$. Then, integrating \eqref{0th-periodic-1} in time over $[0,t]$ gives that for small $\nu$
\begin{equation}
	\label{ineq-0th-periodic'}
	\sup\limits_{t\in[0,T]} \norm{[\ut, \nt,\phit,\phit_x]}^2+
	\int_{0}^{T} \norm{ \ut_x }^2 \, dt  
	\lesssim  \norm{[\nt_0,\ut_0]}^2 + \norm{ \phit_0}_{H^1}^2.
\end{equation}

\textbf{Step 2.} Similarly as the energy estimates for the perturbation on the rarefaction wave (Lemma \ref{lem-1st-rare}),
differentiating \subeqref{eqn-pert-periodic}{1} and \subeqref{eqn-pert-periodic}{3} in $x$, taking the inner products of the resultant with $\nt_x/n^2$ and $\phit_x$ with respect to $x$ respectively on $(0,\pi)$, and using integration by parts yield that
\begin{align}
	\label{1st-periodic-1}
	\frac{1}{2}\frac{d}{dt}(\nt_x^2,n^{-2}) +(\p_x^2\phit,\p_x^2\phit) +(\nb e^{-\phit}\phit_x,\phit_x)+\left(\p_x^2\ut,\frac{\nt_x}{n}\right)+(\nt_x,\phit_x)=-\frac{1}{2}\left(\frac{\ut_x}{n^2},\nt_x^2\right),
\end{align} 
where we have used 
\begin{align*}
	-(\nt_x^2,n^{-3}\p_tn)-(u_x\nt_x+u\p_x^2\nt+n_x\ut_x,\frac{\nt_x}{n^2})=-\frac{1}{2}\left(\frac{\ut_x}{n^2},\nt_x^2\right).
\end{align*}
By taking the inner product of \subeqref{eqn-pert-periodic}{2} with $\nt_x$ on $(0,\pi)$ and sum with \eqref{1st-periodic-1}, we obtain that
\begin{align}
	\label{1st-periodic-2}
	&\frac{d}{dt}(\ut,\nt_x)+\frac{1}{2}\frac{d}{dt}(\nt_x^2,n^{-2})+(\p_x^2\phit,\p_x^2\phit)+(\nb e^{-\phit}\phit_x,\phit_x)+A(n^{-1},\nt_x^2)\nonumber\\
	&=-\underbrace{\frac{1}{2}\left(\frac{\ut_x}{n^2},\nt_x^2\right)}_{\textbf{J}_1}+\underbrace{(\ut,\p_t\nt_x)-(uu_x)}_{\textbf{J}_2}.
\end{align}
It follows from the \textit{a priori} assumption 
\begin{equation}
	\label{eqn-assum-periodic}
	\varepsilon:=\sup\limits_{t\in[0,T]} \norm{[\nt, \ut, \phit]}_{H^1} < 2\nu,
\end{equation} 
the Young inequality and Sobolev embedding that
\begin{align*}
	\abs{\textbf{J}_1}&\lesssim\eta\norm{\nt_x}^2+C_\eta\norm{\nt_x}^2\norm{\ut_x}_{L^\infty}^2\lesssim(\eta+C_\eta\varepsilon^2)\norm{\nt_x}^2+C_\eta\varepsilon^2\norm{\p_x^2\ut}^2.
\end{align*}
To estimate $\textbf{J}_2$, we utilize integration by parts and \subeqref{eqn-pert-periodic}{1} to derive that
\begin{align*}
	\abs{\textbf{J}_2}&=\abs{(\ut_x,\p_x(nu))-(uu_x,\nt_x)}=\abs{(n,\ut_x^2)}\lesssim\norm{\ut_x}^2.
\end{align*}
Note that
\begin{align*}
	\left|\int_{0}^{T}\frac{d}{dt}(\ut,\nt_x)\right|\lesssim \|\ut_0\|^2+\|\nt_{0x}\|^2+\eta\|\nt_x(T)\|^2+C_\eta\|\ut(T)\|^2, \quad (\nb e^{-\phit}\phit_x,\phit_x)\gtrsim \|\phit_x\|^2.
\end{align*}
Then, by integrating \eqref{1st-periodic-2} with respect to time on $[0,t]$ and plugging estimates of $\textbf{J}_1$ and $\textbf{J}_2$, we have that for small $\eta$ and $\nu$,
\begin{equation}
	\label{ineq-1st-periodic'}
	\begin{aligned}
		\sup\limits_{t\in[0,T]} \norm{\nt_x}^2+
		\int_{0}^{T} \norm{ [\nt_x, \phit_x, \p_x^2\phit] }^2 \, dt  \lesssim & \norm{[\ut_0]}^2 + \norm{ [\nt_0,\phit_0]}_{H^1}^2  +\sup\limits_{t\in[0,T]} \norm{\ut}^2   \\
		&
		+  \int_{0}^{T} \norm{ \p_x\ut }^2 \, dt
		+  \varepsilon\int_{0}^{T} \norm{ [\p_x \nt, \p_x^2\ut]}^2 \, dt .
	\end{aligned}
\end{equation}

\textbf{Step 3.} Similarly as the energy estimates for the perturbation on the rarefaction wave (Lemma \ref{lem-2nd-rare}),
differentiating \subeqref{eqn-pert-periodic}{2} in $x$, taking inner products of the resultant with $\p_x\ut$ with respect to $x$ on $(0,\pi)$, and using integration by parts yield that
\begin{equation}\label{2nd-periodic-1}
	\frac{1}{2}\frac{d}{d}(\ut_x,\ut_x)+\left(\frac{\p_x^2\ut}{n},\p_x^2\ut\right)=\underbrace{(\ut\p_x^2\ut,\ut_x)}_{\textbf{K}_1}+\underbrace{A\left(\frac{n_x}{n},\p_x^2\ut\right)}_{\textbf{K}_2}+\underbrace{\left(\p_x^2\phit,\ut_x\right)}_{\textbf{K}_3}.
\end{equation}
It is obvious to get
\begin{align*}
	\abs{\textbf{K}_1}&\lesssim\eta\norm{\p_x^2\ut}^2+C_\eta\norm{\ut}_{H^1}^2\norm{\ut_x}^2,\\
	\abs{\textbf{K}_2}&\lesssim\eta(\norm{\p_x^2\ut}^2)+C_\eta\norm{\nt_x}^2,\\
	\abs{\textbf{K}_3}&\lesssim\norm{\ut_x}^2+\norm{\p_x^2\phit}.
\end{align*}
Then, by integrating \eqref{2nd-rare-1} with respect to time on $[0,t]$ and plugging estimates of $\textbf{K}_i (i=1,\cdots,3)$, we have that for small $\eta$ and $\nu$,
\begin{equation}
	\label{ineq-2nd-periodic'}
	\begin{aligned}
		&\sup\limits_{t\in[0,T]} \norm{\ut_x}^2+
		\int_{0}^{T} \norm{ \p_x^2\ut }^2 \, dt  
		\lesssim  \norm{[\nt_0, \ut_0, \phit_0]}_{H^1}^2
		+  \int_{0}^{T} \norm{ [\nt_x,\ut_x,\p_x^2 \phit] }^2 \, dt.
	\end{aligned}
\end{equation}

\textbf{Step 4.} Combining the estimates \eqref{ineq-0th-periodic'}, \eqref{ineq-1st-periodic'}, \eqref{ineq-2nd-periodic'}, and the elliptic estimate from \subeqref{eqn-pert-rare}{3}:
\begin{align*}
	\norm{\phit(t)}_{H^1}^2&\lesssim \norm{\nt(t)}^2, \quad \text{for~}t\geq 0,
\end{align*}
we have that there exist $\nu_0>0$ independent of $T$ such that if $\nu<\nu_0$ and \eqref{eqn-assum-periodic} holds, then
\begin{equation}
	\label{eqn-aprioi-periodic}
	\sup\limits_{t\in[0,T]} \norm{[\nt, \ut, \phit]}_{H^1}^2+
	\int_{0}^{T} \Big(\norm{[\nt_x, \ut_x, \p_x^2\ut, \phit_x,\p_x^2\phit]}^2 \Big) \, dt
	\lesssim  \norm{[\nt_0,\ut_0]}_{H^1}^2.
\end{equation}
It follows from \eqref{eqn-aprioi-periodic} and Poincar\'{e} inequality that there exist constants $ C>0 $ and $ \alpha >0 $ independent of $ \nu $ and $ t$ such that for $t\geq0$,
\begin{equation}
	\norm{[n,m,u,\phi]-[\nb,\mb,\ub,\phib]}_{H^1((0,\pi))}(t) \leq C \nu e^{-2\alpha t}. 
\end{equation}
The estimates for the higher order derivatives in \eqref{eqn-pert-periodic} are standard and their exponential decay rates can be proved in the same way with the aid of Poincar\'e inequality, and the details are omitted here.

\subsection{Proof of Lemma \ref{lem-error}}
\begin{enumerate}[label = \rm{(\roman*)},ref =\rm{(\roman*)}]
\item 
	Define $$a(u,v)=\frac{f(u)-f(v)}{u-v}=\int_{0}^{1} f'(v+\theta(u-v)) d\theta.$$
	$$b(u,v)=\frac{f'(u)-f'(v)}{u-v}=\int_{0}^{1} f''(v+\theta(u-v)) d\theta.$$
	We first consider \eqref{lem-error-1}:
	\begin{align*}
	&f(u^\sha)-f(u^*)-(f(u^-)-f(\ub^-))(1-g_1)-(f(u^+)-f(\ub^+))g_1\\
	=\,& \{ f(u^\sha)-f(u^*)-f(u^-)+f(\ub^-) \}(1-g_1) \\
	&\quad + \{  f(u^\sha)-f(u^*)-f(u^+)+f(\ub^+) \} g_1.
	\end{align*}
	It suffices to prove that one part of the above formula equals to $\sR$, and the proof of the other part is similar.
	Noting that $v^\pm$ belong to $\sR_{\rm Ti}$, and $g, g_0,$ and $ g_1$ all belong to $\sG$ (i.e. $g(1-g)$ etc. belong to $\sR_{\rm Sp}$), it yields that
	\begin{align*}
	& \{  f(u^\sha)-f(u^*)-f(u^-)+f(\ub^-) \}(1-g_1) \\
	=\,& \{a(u^\sha, u^*) (v^-(1-g)+v^+ g) - a(u^-, \ub^-) v^- + \sR \} (1-g_1)\\
	=\,& \{a(u^\sha, u^*) - a(u^-, \ub^-)  \} v^- (1-g)(1-g_1)+\sR\\
	=\,& \int_{0}^{1}  \{f'(u^*+\theta(u^\sha- u^*) ) -f'(\ub^-+\theta(u^\sha- u^*) )\}  + \{f'(\ub^-+\theta(u^\sha- u^*) ) \\
	& \qquad- f'(\ub^-+\theta(u^- - \ub^-) )\}   d\theta \cdot v^- (1-g)(1-g_1)+\sR\\
	=\,& \int_{0}^{1}  \big\{b\big(u^*+\theta(u^\sha- u^*), \ub^-+\theta(u^\sha- u^*) \big) (u^*-\ub^-)   \\
	& \quad  + b\big(\ub^-+\theta(u^\sha- u^*) , \ub^-+\theta(u^- - \ub^-) \big) \theta(u^\sha- u^*-u^- + \ub^-)\big\}d\theta  \\
	&\qquad \cdot v^- (1-g)(1-g_1)+\sR \\
	=\,&\sR,
	\end{align*}
	where $u^*-\ub^-=(\ub^+ - \ub^-) g_0$, $u^\sha- u^*-u^- + \ub^-=(v^+ - v^-) g$ are applied in the last step. Therefore, \eqref{lem-error-1} is proved.
	
	Moreover, \begin{align*}
		& f(u^*) - f(\ub^-)(1-g_1) - f(\ub^+)g_1\\
		=\,& \{f(u^*) - f(\ub^-)\} (1-g_1) + \{f(u^*) - f(\ub^+)\} g_1\\
		=\,& a(u^*,\ub^-) (u^* - \ub^-) (1-g_1) + a(u^*,\ub^+) (u^* - \ub^+) g_1\\
		=\,& a(u^*,\ub^-) (\ub^+ - \ub^-) g_0(1-g_1) - a(u^*,\ub^+) (\ub^+ - \ub^-) (1-g_0) g_1 \\
		=\,&\sR_{\rm Sp}.
	\end{align*}
	The above term are exactly $0$ when $g_1=\dfrac{f(u^*)-f(\ub^-)}{f(\ub^+)-f(\ub^-)}$. Therefore, \eqref{lem-error-2} is proved by summing up \eqref{lem-error-1} and the above formula.

\item \eqref{lem-error-3} is obtained directly as follows.
\begin{align*}
	&\p_xu^\sha-\p_xu^*-\p_x u^-(1-g_1)-\p_x u^+ g_1\\
	=\,&\p_x (v^+ - v^-) (g-g_1) + (v^+ - v^-) \p_x g + \p_x \sR \\
	=\,&\sR.
\end{align*}

\item \eqref{lem-error-3.5} is trivial. \eqref{lem-error-4} is obtained directly as follows.
\begin{align*}
	&n^\sha u^\sha-n^*u^*-(n^- u^- -\nb^-\ub^-)(1-g_1)-(n^+ u^+ -\nb^+\ub^+)g_1\\
 =\,& (n^\sha u^\sha-n^*u^*-n^- u^- +\nb^-\ub^-)(1-g_1) + \cdots \\
 =\,& (n^\sha (u^\sha -u^*)+ (n^\sha-n^*)u^* - n^-v^- - \rho^- \ub^-)(1-g_1)+\sR+ \cdots\\
 =\,&  (n^\sha v^-(1-g)+ \rho^-(1-\tilde{g})u^* - n^-v^- - \rho^- \ub^-)(1-g_1)+\sR+ \cdots\\
 =\,&  \Big( \big(\nb^-(1-\tilde{g}_0)+\rho^-(1-\tilde{g})\big) v^-(1-g)- n^-v^-\\
 &\quad + \rho^-(1-\tilde{g})\ub^-(1-g_0)  - \rho^- \ub^-\Big)(1-g_1)+\sR+ \cdots\\
 =\,& \sR,
\end{align*}
where in the last two steps, $n^\sha$, $u^\sha$, $n^*$, and $u^*$ are replacing by the constants, the decaying terms in $\sR_{\rm Sp}$ and the weights with the aid of their definitions. 
%
%Based on the above equality, 
%\begin{align*}
%	&\p_x (n^\sha u^\sha)-\p_x (n^*u^*)-\p_x (n^- u^-) (1-g_1)-\p_x (n^+ u^+) g_1\\
%	%
%	=\,&\p_x \Big(n^\sha u^\sha-n^*u^*-(n^- u^- -\nb^-\ub^-)(1-g_1)-(n^+ u^+ -\nb^+\ub^+)g_1\Big)\\
%	&+(n^+ u^+ -\nb^+\ub^+ - n^- u^- +\nb^-\ub^-) \p_x g_1 \\
%	=\,& \sR.
%\end{align*}
%\begin{align*}
%	&\p_x (n^\sha u^\sha)-\p_x (n^*u^*)-\p_x (n^- u^-) (1-g_1)-\p_x (n^+ u^+) g_1\\
%	%
%	=\,&\p_x n^* (u^\sha-u^*)+(n^\sha-n^*)\p_xu^*\\
%	&+(\rho^+-\rho^-)u^\sha\p_x\tilde{g}+n^\sha(v^+-v^-)\p_xg \\	
%	&+(\p_x(n^-u^+)+\p_x(n^+u^-)-\p_x(n^-u^-)-\p_x(n^+u^+))(1-g_1)g_1 +R\\
%	=\,& R,
%\end{align*}
%where $\p_x n^*=R_I$, $u^\sha-u^*=R_E$ etc.
\end{enumerate}

\subsection{Derivation of \eqref{eqn-pert-shock}-\eqref{eqn-error-shock}}\label{subsect-error}
It follows from \eqref{weight-shock}, \eqref{eqn-shock-deriv} and the property that $\abs{\X}, \abs{\Y}\leq C$ for all $t\geq 0$, we have $\sigma_{\X},\sigma_{\Y}\in\sG$. Note  that $m^\sha$ in \eqref{def-ansatz-shock} satisfies 
	\begin{equation}
		m^\sha=m^s_\Y+w^-(1-\sigma_{\Y})+w^+\sigma_{\Y}=m^s_\Y+w^-(1-\sigma_{\X})+w^+\sigma_{\X}+\sR.
	\end{equation}
	Then, one has
	\begin{equation}
		\p_x m^\sha=\p_x m^s_\X+\p_x(m^s_\Y-m^s_\X)+\p_x w^-(1-\sigma_{\X})+\p_xw^+\sigma_\X+(w^+-w^-)\p_x\sigma_{\X}+\p_x\sR.
	\end{equation}
On the other hand, for the ansatz $n^\sha$,
\begin{equation}
	\p_tn^\sha=\p_xn^s_\X(-s-\X')+\p_t\rho^-(1-\sigma_{\X})+\p_t\rho^+\sigma_{\X}+(\rho^+-\rho^-)\p_x\sigma_{\X}(-s-\X').
\end{equation}
By using the fact
\begin{equation*}
	\p_xm^s_\X=(\mb^+-\mb^-)\p_x\sigma_{\X},\quad \p_xn^s_\X=(\nb^+-\nb^-)\p_x\sigma_{\X},
\end{equation*}
and the equations of periodic solutions $[n^\pm,m^\pm,\phi^\pm]$, we have \subeqref{eqn-pert-shock}{1} with $h_1$ in \subeqref{eqn-error-shock}{1}.

Similarly, note that $n^\sha$ and $\phi^\sha$ in \eqref{def-ansatz-shock} satisfy
\begin{align}
	n^\sha&=n^s_{\X}+\rho^-(1-\sigma_{\X})+\rho^+\sigma_{\X}=n^s_{\X}+\rho^-(1-\sigma_{\Y})+\rho^+\sigma_{\Y}+\sR,\\
	\phi^\sha&=\phi^s_{\X}+\varphi^-(1-\sigma_{\X})+\varphi^+\sigma_{\X}=\phi^s_{\X}+\varphi^-(1-\sigma_{\Y})+\varphi^+\sigma_{\Y}+\sR.
\end{align}
By using Lemma \ref{lem-error},  we have
\begin{align}\label{6-19}
	\frac{(m^\sha)^2}{n^\sha}&=\frac{(m^s_\Y)^2}{n^s_\X}+\left(\frac{(m^-)^2}{n^-}-\frac{(\mb^-)^2}{\nb^-}\right)(1-\sigma_{\Y})+\left(\frac{(m^+)^2}{n^+}-\frac{(\mb^+)^2}{\nb^+}\right)\sigma_{\Y}+\sR \nonumber\\
	&=\frac{(m^s_\Y)^2}{n^s_\Y}+\left(\frac{(m^s_\Y)^2}{n^s_\X}-\frac{(m^s_\Y)^2}{n^s_\Y}\right)+\left(\frac{(m^-)^2}{n^-}-\frac{(\mb^-)^2}{\nb^-}\right) (1-\sigma_{\Y})\\
	&\qquad+\left(\frac{(m^+)^2}{n^+}-\frac{(\mb^+)^2}{\nb^+}\right)\sigma_{\Y}+\sR, \nonumber
\end{align}
\begin{align}
	(A+1)n^\sha=(A+1)\{n^s_\Y+(n^s_\X-n^s_\Y)+\rho^-(1-\sigma_{\Y})+\rho^+\sigma_{\Y}\}+\sR,
\end{align}
\begin{align}\label{6-20}
	\p_x\frac{m^\sha}{n^\sha}&=\p_x\frac{m^s_\Y}{n^s_\X}+\p_x\frac{m^-}{n^-}(1-\sigma_{\Y})+\p_x\frac{m^+}{n^+}\sigma_{\Y}+\sR\nonumber\\
	&=\p_x\frac{m^s_\Y}{n^s_\Y}+\p_x(\frac{m^s_\Y}{n^s_\X}-\frac{m^s_\Y}{n^s_\Y})+\p_x\frac{m^-}{n^-}(1-\sigma_{\Y})+\p_x\frac{m^+}{n^+}\sigma_{\Y}+\sR,
\end{align}
\begin{align}\label{6-21}
	\frac{1}{2}(\p_x\phi^\sha)^2&=\frac{1}{2}\{(\p_x\phi^s_{\X})^2 +(\p_x\phi^-)^2(1-\sigma_{\Y}) +(\p_x\phi^+)^2\sigma_{\Y}\}+\sR\nonumber\\
	&=\frac{1}{2}\{(\p_x\phi^s_{\Y})^2+(\p_x\phi^s_{\X})^2-(\p_x\phi^s_{\Y})^2 +(\p_x\phi^-)^2(1-\sigma_{\Y})  +(\p_x\phi^+)^2\sigma_{\Y}\}+\sR,
\end{align}
and
\begin{align}\label{6-22}
	\p_x^2\phi^\sha&=\p_x^2\phi^s_{\X}+\p_x^2\phi^-(1-\sigma_{\Y})+\p_x^2\phi^+\sigma_{\Y}+\sR\nonumber\\
	&=\p_x^2\phi^s_{\Y}+\p_x^2\phi^s_{\X}-\p_x^2\phi^s_{\Y}+\p_x^2\phi^-(1-\sigma_{\Y})+\p_x^2\phi^+\sigma_{\Y}+\sR.
\end{align}
Moreover,
\begin{align}\label{6-23}
	\p_t m^\sha=\p_xm^s_\Y(-s-\Y')+\p_tm^-(1-\sigma_{\Y})+\p_tm^+\sigma_{\Y}+(w^+-w^-)\p_x\sigma_{\Y}(-s-\Y').
\end{align}
Taking derivatives of \eqref{6-19}-\eqref{6-22} with respect to $x$ and adding with \eqref{6-23} yield
\begin{align*}
	\p_tm^\sha+\p_x\left(\frac{(m^\sha)^2}{n^\sha}+(A+1)n^\sha\right)-\p_x^2\left(\frac{m^\sha}{n^\sha}\right)-\p_x\left(\frac{1}{2}(\p_x\phi^\sha)^2-\p_x^2\phi^\sha\right)=h_2,
\end{align*}
where we have used equations of viscous shock waves $[n^s,u^s,\phi^s]$, periodic solutions $[n^\pm,$ $m^\pm, \phi^\pm]$ and the Rankine-Hugoniot condition \subeqref{RH}{2}:
\begin{align*}
-\p_xm^s_\Y \Y' +\jump{w}\p_x\sigma_{\Y}(-s-\Y')
=&-\jump{m} \p_x\sigma_{\Y}(s+\Y') + \jump{\mb}\p_x \sigma_{\Y}s\\
=&-\jump{m} \p_x\sigma_{\Y}(s+\Y') + \jump{\frac{\mb^2}{\nb}+(A+1)\nb} \p_x\sigma_{\Y}.
\end{align*}
%\begin{align*}
%	&-\p_xm^s_\Y \Y' +\jump{w}\p_x\sigma_{\Y}(-s-\Y')\\
%	&\qquad
%	+\jump{\frac{m^2}{n}-\frac{\mb^2}{\nb}+(A+1)\rho - \p_x\frac{m}{n}-\frac{1}{2} (\p_x \phi)^2-\p_x^2 \phi} \p_x \sigma_{\Y}\\
%	%
%	=&-\jump{m} \p_x\sigma_{\Y}(s+\Y') + \jump{\mb}\p_x \sigma_{\Y}s + \jump{\cdots}\p_x \sigma_{\Y}\\
%	%
%	=&-\jump{m} \p_x\sigma_{\Y}(s+\Y') + \jump{\frac{\mb^2}{\nb}+(A+1)\nb} \p_x\sigma_{\Y} + \jump{\cdots}\p_x \sigma_{\Y}\\
%	=& \left\{-\jump{m} (s+\Y') +\jump{\frac{m^2}{n}+(A+1)n - \p_x\frac{m}{n}-\frac{1}{2} (\p_x \phi)^2-\p_x^2 \phi}\right\} \p_x \sigma_{\Y}.
%\end{align*}\todo{to choose 1 from the 2 formulas}
Finally, the last equation and $h_3$ in \eqref{eqn-pert-shock} directly follows from Lemma \ref{lem-error}.

\

\noindent {\bf Acknowledgments:}
The research of Yeping Li is supported in part by the National Natural Science Foundation of China (Grant No. 12171258 and 12331007). The research of Yu Mei is supported by the National Natural Science Foundation of China No. 12101496 and the Fundamental Research Funds for the Central Universities No. G2021KY05101.	
The research of Yuan Yuan is supported by the Natural Science Foundation of Guangdong Province (No. 2021A1515010247), and National Key R\&D Program of China (No. 2021YFA1002900).

\bibliographystyle{plain}
\bibliography{nsp-ref.bib}% common bib file
	
%\begin{thebibliography}{99}
%
%
%
%\bibitem{M}
%Author names.
%\emph{paper title}.
%\newblock Journal title, \textbf{no.}(Issue), page, year.
%
%
%		
%		
%\end{thebibliography}

\end{document}